\documentclass[12pt]{article}
\usepackage{graphicx}
\usepackage{subcaption}
\usepackage[utf8]{inputenc}
\usepackage{xcolor}
\usepackage[a4paper, total={6in, 8in}]{geometry}
\usepackage{amsfonts, amsmath, amssymb, amsthm}
\usepackage{hyperref}
\usepackage{url}
\usepackage{csquotes}
\usepackage[style=apa]{biblatex}

\newtheorem{Th}{Theorem}

\newtheorem{Cor}[Th]{Corollary}
\newtheorem{Lema}[Th]{Lemma}

\theoremstyle{definition}

\newtheorem{Def}[Th]{Definition}

\theoremstyle{remark}

\newcommand{\rfrac}[2]{^{#1}\!/_{#2}} 

\makeatletter
\newcommand{\labitem}[2]{%
	\def\@itemlabel{\textbf{#1}}
	\item
	\def\@currentlabel{#1}\label{#2}}
\makeatother
\newcommand{\footremember}[2]{%
    \footnote{#2}
    \newcounter{#1}
    \setcounter{#1}{\value{footnote}}%
}
\newcommand{\footrecall}[1]{%
    \footnotemark[\value{#1}]%
} 
\providecommand{\keywords}[1]{\textbf{\textit{Keywords}} #1}
\providecommand{\MSC}[1]{\textbf{\textit{MSC 2020 subject classification}} #1}

\title{Distributional Convergence of Empirical Entropic Optimal Transport and Statistical Applications}

\author{Santiago Arenas-Velilla \footremember{ims}{\scriptsize Institute for Mathematical Stochastics, University of G\"ottingen, Goldschmidtstra{\ss}e 7, 37077 G\"ottingen} 
			\and 
			Axel Munk \footrecall{ims} \footnote{\scriptsize Cluster of Excellence ``Multiscale Bioimaging: from Molecular Machines to Networks of
Excitable Cells" (MBExC),University Medical Center, Robert-Koch-Stra{\ss}e 40, 37075 G\"ottingen, Germany}
			\and
			Luis-Alberto Rodr\'iguez \footrecall{ims}{}
						}
\date{\today}

\begin{document}

\maketitle

\begin{abstract}
	Recently, the statistical properties of empirical Entropic Optimal Transport (EOT) have attracted great interest, as this quantity has been shown to be useful for complex data analysis, among other reasons due to its computational efficiency. In several applications, it has been observed that the EOT plan provides valuable information beyond just the optimal value. For example, in cell biology, colocalization analysis based on the EOT plan has been introduced as a measure for quantification of spatial proximity of different protein assemblies. Despite recent progress in the analysis of its risk properties, a precise understanding of its statistical fluctuations to make it accessible for inference remains elusive to a large extent. In this paper, we derive asymptotic weak convergence result for a large class of functionals of the EOT plan, in which the colocalization process is included. The proof is based on Hadamard differentiability and the extended delta method. As an application, we obtain uniform confidence bands for colocalization curves and bootstrap consistency. Our theory is supported by simulation studies and is illustrated by real world data analysis from mitochondrial protein colocalization.
\end{abstract}

\keywords{Bootstrap; colocalization analysis; entropic optimal transport; Hadamard directional differentiability; limit laws; uniform confidence bands.}

\MSC{Primary: 62G20, 62E20 Secondary: 62-08.}

\section{Introduction}\label{Sec:Intro}
		\subsubsection*{From optimal transport to entropic optimal transport}
			Since the pioneering work of \cite{Monge1781}, optimal transport (OT) has become the subject of significant interest in many areas, with a long standing history in mathematics and economics, as evidenced by seminal works such as \cite{Kantorovich1942}, \cite{Rachev&Rueschendorf1998}, \cite{Villani2008}, \cite{Santambrogio2015} and \cite{Galichon2018}. Besides its theoretical appeal and its immediate use for solving transportation and related optimization problems (see e.g. \cite{Vanderbei2020}), more recently the OT distance and related quantities, such as the OT map, have been recognized as valuable tools for data analysis. Such methodology has been applied in different areas: in computer science (\cite{Schmitz2018} and \cite{Balikas2018}), mathematical imaging (\cite{Ferradans2014} and \cite{Adler2017}), machine learning (\cite{Arjovsky2017} and \cite{Sommerfeld2019}) and computational biology (\cite{Schiebinger2019} and \cite{Tameling2021}) to mention a few.
			
			An important contribution towards the practical use of OT based data analysis has been provided by computational advances. The so called \enquote{entropic relaxation} of the original optimization problem has become particularly prominent (\cite{Cuturi2013}), as it allows for an iterative diagonal matrix multiplication algorithm which can improve the run time of exact solvers by an order of magnitude (see \cite{Cuturi&Peyre2019}). This, and further computational improvements (e.g \cite{Altschuler2017}, \cite{Dvurechensky2018} or \cite{Kassraie2024}), paved the way to analyse large scale data sets based on entropic optimal transport (EOT) in a routine daily data analysis on a laptop (see, e.g., the POT package \cite{flamary2021pot} for a \verb!Python! implementation, the \verb!R!-package \emph{transport}, \cite{Schrieber2016}, and the \verb!Julia! package \emph{MuSink}, \cite{Musink2025Staudt}).
			
			Consequently, there also emerged great interest recently in the statistical properties of EOT. We mention \cite{Nutz2021}, for a proof of existence and uniqueness of solutions for the EOT problem in large generality (see also \cite{Carlier&Laborde2020} for an alternative approach based on differentiability of the corresponding functional). Relevant to a valid statistical justification of the use of EOT are risk bounds and the investigation of the influence of the regularization parameter, see e.g. \cite{Genevay2016}, \cite{Genevay2019}, \cite{Groppe&Hundrieser2024}, \cite{Pal2024} and \cite{Sadhu2025} (and references therein). A different branch of literature is concerned with central limit theorems for the EOT-value and related quantities initiated by \cite{Klatt2020}, see \cite{Gonzalez2022}, \cite{Gonzalez&Hundrieser2023} and \cite{Goldfeld2024}. Meanwhile, the literature on the topic is vast and we refer to \cite{Balakrishnan2025}, \cite{delBarrio2025} and \cite{Chewi2025} for review of the recent contributions to the statistical analysis of EOT.
			
			However, a rigorous a treatment of the regularized OT \emph{plan} as a stochastic process is still missing, albeit this could help to provide a concise statistical theory for certain functionals of the plan which have been proposed recently. The goal of this work is to close this gap. For example, in \cite{Klatt2020}, so called EOT colocalization curves have been introduced to describe the relative amount of mass on a given spatial scale to match two proteins assemblies (see also \cite{naas2024multimatch} for an extension to unbalanced EOT and \cite{Vaisey_2022}, \cite{Strong_2023} and \cite{Hirtl_2025} for applications in different areas of cell biology). Among others, our results provide asymptotic uniform confidence \emph{bands} for EOT-colocalization curves based on its limit process. We stress that previous approaches only could provide pointwise confidence \emph{intervals}, see e.g. \cite{Klatt2020}, \cite{Gonzalez2022} or \cite{Liu2025}.
			
			To be more precise, we introduce EOT as the solution of the following optimization problem: Let $\mathcal{X}$ be a Polish space and let $\mu$, $\nu$ two probability measures on $\mathcal{X}$ and $c:\mathcal{X}\times\mathcal{X}\to\mathbb{R}$ be a measurable non-negative cost function. The \emph{entropic optimal transport plan} $\pi_{\mu,\nu}^{\lambda}$ for a regularization parameter $\lambda>0$ is defined as the unique minimizer of the (strictly convex) \emph{entropically regularized optimal transport functional}
			\begin{equation}\label{Eqn:EOT}
				\operatorname{EOT}_{c}^{\lambda}(\mu,\nu)=\min_{\pi\in\Pi(\mu,\nu)}\,\left(\int_{\mathcal{X}^{2}}c(x,y)\,\operatorname{d}\!\pi(x,y)+\lambda\,\operatorname{KL}(\pi|\mu\otimes\nu)\right),
			\end{equation}
			over all couplings between the marginals $\mu$ and $\nu$,
			\begin{equation*}
				\begin{aligned}
					\Pi(\mu,\nu)=&\left\{\pi\in\mathcal{P}\left(\mathcal{X}^{2}\right):\pi(A\times\mathcal{X})=\mu(A),\right.
					\\
					&\left.\pi(\mathcal{X}\times A)=\nu(A)\quad \operatorname{for}\ A\in\mathcal{B}_{\mathcal{X}}\right\}.
				\end{aligned}
			\end{equation*}
			Here, $\mathcal{P}\left(\mathcal{X}^{2}\right)$ is the set of Borel probability measures on $\mathcal{X}^{2}$, $\mathcal{B}_{\mathcal{X}}$ is the Borel $\sigma$-field of $\mathcal{X}$, and
			\begin{equation*}
				\operatorname{KL}(\pi|\mu\otimes\nu)=\left\{\begin{array}{ll}
					\int_{\mathcal{X}^{2}}\log\left(\frac{\operatorname{d}\!\pi}{\operatorname{d}\!\mu\otimes\nu}\right)\,\operatorname{d}\!\pi&\pi\ll\mu\otimes\nu
					\\
					\infty&\operatorname{otherwise}
				\end{array}\right.,
			\end{equation*}
			where $\ll$ denotes \enquote{absolute continuity of measures}. Existence and uniqueness of $\pi_{\mu,\nu}^{\lambda}$ is guaranteed by \cite[Theorem 4.2]{Nutz2021}. $\operatorname{KL}(\pi|\mu\otimes\nu)$ is known as the Kullback-Leibler divergence between $\pi$ and $\mu\otimes\nu$ and plays the role of a penalization term which assigns more weight to higher cost transportation. For the use of alternative penalization terms we refer to \cite{Klatt2020}, \cite{Chapel2021} and references therein. 
			It is worth to mention that if $\lambda=0$ in \eqref{Eqn:EOT}, the (non penalized) optimal transport problem is recovered, see, e.g., \cite{Altschuler2022}, \cite{Nutz&Wiesel2022}, \cite{Groppe&Hundrieser2024}, \cite{Manole2024} or \cite{Rigollet&Stromme2025} among others. It is well known that \eqref{Eqn:EOT} also admits the dual formulation (\cite[Theorem 3.2]{Nutz2021})
			\begin{equation}\label{Eqn:DualEOT}
				\begin{aligned}
					\operatorname{EOT}_{c}^{\lambda}(\mu,\nu)&=\sup_{\substack{
							f\in\operatorname{L}_{1}(\mu)
							\\
							g\in\operatorname{L}_{1}(\nu)
					}}\,\left(\int_{\mathcal{X}}\left(f(x)+g(y)
					\right.\right.
					\\
					&\quad\left.\left.-\lambda\,\operatorname{exp}\left(\frac{f(x)+g(y)-c(x,y)}{\lambda}\right)\right)\,\operatorname{d}\!\mu\otimes\nu(x,y)\right)+\lambda.
				\end{aligned} 
			\end{equation}
			If we assume $\int_{\mathcal{X}}g\,\operatorname{d}\!\nu=0$, the optimal solutions $\left(f_{\mu,\nu}^{\lambda},g_{\mu,\nu}^{\lambda}\right)$ of the maximization problem \eqref{Eqn:DualEOT}, known as entropic Kantorovich potentials, are unique $\mu\otimes\nu$-a.s. Define the \emph{duality mapping} as $\left(f_{\mu,\nu}^{\lambda},g_{\mu,\nu}^{\lambda}\right)=\psi(\mu,\nu)$. The map $\psi$ is also the solution mapping of the system of equations
			\begin{equation}\label{Eqn:ImplicitRelationPotentials}
				\begin{aligned}
					&f=-\lambda\,\log\left(\int_{\mathcal{X}}\operatorname{exp}\left(\frac{g(y) - c(\cdot, y)}{\lambda}\right)\,\operatorname{d}\!\nu(y)\right),
					\\&g=-\lambda\,\log\left(\int_{\mathcal{X}}\operatorname{exp}\left(\frac{f(x)-c(x,\cdot)}{\lambda}\right)\,\operatorname{d}\!\mu(x)\right).
				\end{aligned}
			\end{equation}
			Additionally, the Kantorovich potentials satisfy
			\begin{equation}\label{Eqn:Densitypi}
				\frac{\operatorname{d}\!\pi_{\mu,\nu}^{\lambda}}{\operatorname{d}\!\mu\otimes\nu}(x,y)=\operatorname{exp}\left(\frac{f_{\mu,\nu}^{\lambda}(x)+g_{\mu,\nu}^{\lambda}(y)-c(x,y)}{\lambda}\right).
			\end{equation}
			Call
			\begin{equation*}
				\xi(f,g)(x,y)=\exp\left(\frac{f(x)+g(y)-c(x,y)}{\lambda}\right),\quad x,y\in\mathcal{X},\ f,g\in\ell^{\infty}(\mathcal{X}).
			\end{equation*}
			Hence, under this notation the Radon-Nykodim derivative in \eqref{Eqn:Densitypi} becomes $\frac{\operatorname{d}\!\pi_{\mu,\nu}^{\lambda}}{\operatorname{d}\!\mu\otimes\nu}=\xi\left(f_{\mu,\nu}^{\lambda},g_{\mu,\nu}^{\lambda}\right)$.
		\subsubsection*{The EOT-plan and its statistical applications}
			As the EOT-plan $\pi_{\mu,\nu}^{\lambda}$, i.e. the solution of \eqref{Eqn:EOT}, approximates the most cost-efficient matching between $\mu$ and $\nu$ it is a useful descriptor for the spatial proximity of particle assemblies. In the context of super-resolution cell microscopy (for a recent review we refer to the Nature Photonics editorial \cite{NaturePhotonicsSuper-resolution2025}), this approach is particularly appealing (see \cite{Klatt2020}). Here, EOT has been utilized for protein matching, where probability measures $\mu$ and $\nu$ model the initial and target molecular assemblies in compartments of cells, respectively. In order to visualize the amount of mass which has to be transported at a certain physical scale, in \cite{Klatt2020} the EOT curve 
			\begin{equation}\label{Eqn:ClassicalColocFn}
				\begin{aligned}
					\varphi(\mu,\nu)(t)&=\int_{\mathcal{X}^{2}}\mathbf{1}_{\{c\leq t\}}(x,y)\,\operatorname{d}\!\pi_{\mu,\nu}^{\lambda}(x,y)
					\\
					&=\int_{\mathcal{X}^{2}}\mathbf{1}_{\{c\leq t\}}(x,y)\,\xi\left(f_{\mu,\nu}^{\lambda},g_{\mu,\nu}^{\lambda}\right)(x,y)\,\operatorname{d}\!\mu\otimes\nu(x,y)
					\\
					&=\int_{\mathcal{X}^{2}}\mathbf{1}_{\{c\leq t\}}(x,y)\,\operatorname{exp}\left(\frac{f_{\mu,\nu}^{\lambda}(x)+g_{\mu,\nu}^{\lambda}(y)-c(x,y)}{\lambda}\right)\,\operatorname{d}\!\mu\otimes\nu(x,y),
				\end{aligned}
			\end{equation}
			was introduced on a finite ground space. In order to estimate this quantity from data, in our general setup we assume that we sample from two Borel probability measures $\mu$ and $\nu$ on $\mathcal{X}$: $X_{1},\ldots,X_{m}\overset{\operatorname{i.i.d.}}{\sim}\mu$ and $Y_{1},\ldots,Y_{n}\overset{\operatorname{i.i.d.}}{\sim}\nu$ are independent, where $\sim$ symbolizes \enquote{distributed as}. We denote the empirical measures associated to those samples as
			\begin{equation}\label{Eqn:EmpMeas}
				\mu_{m}=\frac{1}{m}\,\sum_{i=1}^{m}\delta_{X_{i}},\quad\nu_{n}=\frac{1}{n}\,\sum_{i=1}^{n}\delta_{Y_{i}},
			\end{equation}
			where $\delta_{a}$ stands for \emph{Dirac delta} centered at $a\in\mathcal{X}$. The plug-in principle suggests the sample or empirical counterpart of \eqref{Eqn:ClassicalColocFn}
			\begin{equation}\label{Eqn:EmpClassicalColocFn}
				\varphi\left(\mu_{m},\nu_{n}\right)(t)=\int_{\mathcal{X}^{2}}\mathbf{1}_{\{c\leq t\}}(x,y)\,\operatorname{d}\!\pi_{\mu_{m},\nu_{n}}^{\lambda}(x,y),
			\end{equation}
			as an (natural) estimator (see Figure \ref{fig:TomMicsection} for an illustration). We will refer to that as the \emph{empirical EOT colocalization curve}.
			
			Since $\pi_{\mu_{m},\nu_{n}}^{\lambda}$ is a discrete measure, observe that the integral in \eqref{Eqn:EmpClassicalColocFn} is actually a sum, which can be efficiently computed in $\mathcal{O}(m\,n)$ operations, provided the EOT-plan $\pi_{\mu_{m},\nu_{n}}^{\lambda}$ has been pre-computed.
	\subsection{Main contributions}
		While risk bounds for the empirical EOT colocalization curve and other functionals of the EOT-plan are relatively easy to derive from existing theory (see references above), a valid theory for its weak convergence and corresponding confidence bands still remains elusive, and will be provided in this paper (see Theorem \ref{Th:ColocConv} and Corollary \ref{Cor:Th:ClassicColocConv}). Further examples which are covered by our approach include the well known Zolotarev metric (\cite{Zolotarev1983}, see Subsection \ref{Subsec:Assumptions}). To this end, we present a framework for weak convergence of EOT-based kernel functionals,
		\begin{equation}\label{Eqn:GeneralizedColocMap}
			\Phi\left(\mu_{m},\nu_{n}\right)(u)=\int_{\mathcal{X}^{2}}u(x,y)\,\operatorname{d}\!\pi_{\mu_{m},\nu_{n}}^{\lambda}(x,y),
		\end{equation}
		for certain classes of kernels $u$ (see Definition \ref{Def:GeneralizedColoc}), including $\varphi\left(\mu_{m},\nu_{n}\right)$ in \eqref{Eqn:EmpClassicalColocFn}.
	\subsection{Theory}\label{Subsec:Theory}
		Before presenting the main results, we introduce notation. Given a topological space $\mathfrak{X}$ endowed with the Borel $\sigma$-field $\mathcal{B}_{\mathfrak{X}}$, we will denote the space of finite signed Borel measures on $\mathfrak{X}$ by $\mathcal{M}(\mathfrak{X})$. Further, $\mathcal{P}(\mathfrak{X})$ stands for the subset of $\mathcal{M}(\mathfrak{X})$ consisting of probability measures. Throughout this paper, $\mathfrak{X}=\mathcal{X},\mathcal{X}^{2}$. $\mathcal{F}$ will denote classes of measurable functions on $\mathfrak{X}$ to be specified later. Usual classes of functions that appear in literature are subsets of $\mathcal{C}_{\operatorname{b}}(\mathfrak{X})$, the space of bounded and continuous functions on $\mathfrak{X}$; $\mathcal{C}_{\operatorname{b}}^{s}(\mathfrak{X})$, the subset of $\mathcal{C}_{\operatorname{b}}(\mathfrak{X})$ consisting of $s$-Hölder continuous functions. We use the convention that if $s>1$, derivatives are also bounded. The unit ball of $\mathcal{C}_{\operatorname{b}}(\mathfrak{X})$ (with respect to the supremum norm), is denoted by $\mathcal{C}_{\operatorname{b}}(\mathfrak{X})_{1}$. Further, $\mathcal{C}_{\operatorname{b}}^{s}(\mathfrak{X})_{1}=\mathcal{C}_{\operatorname{b}}^{s}(\mathfrak{X})\cap\mathcal{C}_{\operatorname{b}}(\mathfrak{X})_{1}$. Given $\eta\in\mathcal{P}(\mathfrak{X})$ and provided $\mathcal{F}\subseteq\operatorname{L}^{1}(\eta)$, the space of integrable functions with respect to the measure $\eta$; $\eta$ is identified as an element of the space of $\ell^{\infty}(\mathcal{F})$, the Banach space of bounded functionals endowed with the supremum norm. In addition, as it is usual in the empirical processes literature (see \cite[Section 2]{van_der_Vaart&Wellner2023}), we will use the notation $\eta(f)=\int_{\mathcal{X}}f\,\operatorname{d}\!\eta$ for $f\in\mathcal{F}$.
		
		Let $X_{1},\ldots,X_{m}\overset{\operatorname{i.i.d.}}{\sim}\mu$ and $Y_{1},\ldots,Y_{n}\overset{\operatorname{i.i.d.}}{\sim}\nu$ be independent samples. We denote the associated empirical processes as $\mathbb{G}_{m}^{\mu}=\sqrt{m}\,\left(\mu_{m}-\mu\right)$ and $\mathbb{G}_{n}^{\nu}=\sqrt{n}\,\left(\nu_{n}-\nu\right)$, respectively. Further, define the \emph{joint empirical process} as
		\begin{equation}\label{Eqn:JointEmpProcess}
			\left(\begin{array}{c}
				\sqrt{\frac{n}{m+n}}\,\mathbb{G}_{m}^{\mu}
				\\
				\sqrt{\frac{m}{m+n}}\,\mathbb{G}_{n}^{\nu}
			\end{array}\right)=\sqrt{\frac{m\,n}{m+n}}\,\left(\begin{array}{c}
				\mu_{m}-\mu
				\\
				\nu_{n}-\nu
			\end{array}\right),
		\end{equation}
		that is, the Cartesian product of the empirical processes.
		
		As well as measures, the empirical processes will be considered as a (random) element of the space of functionals over a class of integrable functions. In particular, in this work a natural space to embed the product empirical process into is $\ell^{\infty}\left(\mathcal{F}_{\mu}\right)\times\ell^{\infty}\left(\mathcal{F}_{\nu}\right)$ where
		\begin{equation}\label{Eqn:FmuFnu}
			\begin{aligned}
				\mathcal{F}_{\mu}&=\left\{\int_{\mathcal{X}}u(\cdot,y)\,\xi\left(f_{\mu,\nu}^{\lambda},g_{\mu,\nu}^{\lambda}\right)(\cdot,y)\,\operatorname{d}\!\nu(y):u\in\mathcal{F}\right\},
				\\
				\mathcal{F}_{\nu}&=\left\{\int_{\mathcal{X}}u(x,\cdot)\,\xi\left(f_{\mu,\nu}^{\lambda},g_{\mu,\nu}^{\lambda}\right)(x,\cdot)\,\operatorname{d}\!\mu(x):u\in\mathcal{F}\right\}.
			\end{aligned}
		\end{equation}
		with $\mathcal{F}\subseteq\operatorname{L}^{1}(\mu\otimes\nu)$ to be specified later.
		
		Given a probability measure $\eta$ and a class of functions $\mathcal{F}$, it is said that \emph{$\mathcal{F}$ enjoys the Donsker property for $\eta$} or that \emph{$\mathcal{F}$ is $\eta$-Donsker class} if the empirical process $\mathbb{G}_{j}^{\eta}$ converges weakly in $\mathcal{F}$ when $j\longrightarrow\infty$. We denote it as $\mathbb{G}_{j}^{\eta}\rightsquigarrow\mathbb{G}^{\eta}$ where $\rightsquigarrow$ stands for \enquote{weak convergence}. The limit process $\mathbb{G}^{\eta}$ is a centered Gaussian process with covariance structure
		\begin{equation*}
			\mathbb{E}\left(\mathbb{G}^{\eta}\left(\varphi_{1}\right)\,\mathbb{G}^{\eta}\left(\varphi_{2}\right)\right)=\eta\left(\varphi_{1}\,\varphi_{2}\right)-\eta\left(\varphi_{1}\right)\,\eta\left(\varphi_{2}\right),
		\end{equation*}
		for $\varphi_{1},\varphi_{2}\in\mathcal{F}$. Further, $\mathbb{G}^{\eta}$ has $\eta$-almost-surely uniformly continuous paths (see \cite[Part 2]{van_der_Vaart&Wellner2023}). $\mathbb{G}^{\eta}$ will also be referred as \emph{$\eta$-Brownian bridge}. If the class $\mathcal{F}$ enjoys the Donsker property for every measure $\eta\in\mathcal{P}(\mathcal{X})$, it is said that $\mathcal{F}$ is a \emph{universal Donsker class}.
		
		Next, we list here the assumptions for later reference.
		\begin{description}
			\labitem{(LCm)}{Itm:LCm} The ground space $\mathcal{X}$ is Polish and locally compact.
			\labitem{(Con)}{Itm:Con} The cost $c:\mathcal{X}^{2}\longrightarrow[0,\infty)$ is bounded and continuous.
			\labitem{(Dnk)}{Itm:Dnk} Given $\mu,\nu\in\mathcal{P}(\mathcal{X})$, for some $s>0$, the class $\mathcal{C}_{\operatorname{b}}^{s}(\mathcal{X})_{1}$ is $\mu$ and $\nu$-Donsker. The classes $\mathcal{F}_{\mu}$ and $\mathcal{F}_{\nu}$ defined in \eqref{Eqn:FmuFnu} are $\mu$,$\nu$-Donsker, respectively.
			\labitem{(Bal)}{Itm:Bal} There exists $\tau\in(0,1)$ such that $\rfrac{m}{(m+n)}\longrightarrow\tau$ when $m,n\longrightarrow\infty$. 
			\labitem{(Bnd)}{Itm:Bnd} The class $\mathcal{F}$ is uniformly bounded, that is, there exists $C>0$ such that for all $u\in\mathcal{F}$ we have that $\|u\|_{\infty}<C$.
		\end{description}
		These conditions will be further discussed in Section \ref{Subsec:Assumptions}. Under \ref{Itm:LCm}, \ref{Itm:Con} and \ref{Itm:Bal}, we prove that EOT kernel functions in \eqref{Eqn:GeneralizedColocMap} converges weakly to a certain Gaussian process (see Section \ref{Sec:AsympColoc}). This enables the construction of asymptotic confidence bands for the EOT colocalization curve in \eqref{Eqn:ClassicalColocFn} as well as other supremum-type metrics beyond the uniform metric, such us Zolotarev metrics (see \cite{Zolotarev1983}). Our proof is based on the careful analysis of Hadamard differentiability of the kernel functional $\Phi$. For practical purposes, it is worth to mention that since the expressions of the limit is a linear transformation of the product $\mu$-$\nu$-Brownian bridge, the limit distribution can be approximated by a classic bootstrap scheme (see Section \ref{Sec:StatApplications}). We finally point out that so far the samples in \eqref{Eqn:JointEmpProcess} are independent. Nevertheless, it is worth to emphasize that our strategy of proof allows for straightforward generalization of the limiting behaviour of
		\begin{equation*}
			\sqrt{\frac{m\,n}{m+n}}\,\left(\Phi\left(\mu_{m},\nu_{n}\right)-\Phi(\mu,\nu)\right),
		\end{equation*}
		in various directions, such as for dependent observations, provided the weak convergence of \eqref{Eqn:JointEmpProcess} can be shown; see for instance \cite[Section 4]{Carcamo2020} for copula process or \cite{Dette&Kokot2022} for certain temporal dependencies structures.
	\subsection{Data analysis, computation and software}
		\subsubsection*{Confidence bands}
			The main result  (Theorem \eqref{Th:ColocConv}) in Section \ref{Sec:AsympColoc} provides asymptotic uniform confidence bands for kernel functions satisfying \ref{Itm:Bnd}, including the EOT colocalization curves in \eqref{Eqn:ClassicalColocFn}, that is, for $\alpha \in [0,1]$, 
			\begin{equation}\label{Eqn:confidenceinterval}
				\left[\varphi\left(\mu_{m},\nu_{n}\right)-\sqrt{\frac{m+n}{m\,n}}\,q_{1-\alpha},\varphi\left(\mu_{m},\nu_{n}\right)+\sqrt{\frac{m+n}{m\,n}}\,q_{1-\alpha}\right]
			\end{equation}
			is an asymptotic $1-\alpha$ uniform confidence band for the EOT colocalization curve, where $q_{1-\alpha}$ denotes the $1-\alpha$ quantile of $\left\|\varphi_{(\mu,\nu)}^{\prime}\left(\mathbb{G}^{\mu},\mathbb{G}^{\nu}\right)\right\|_{\infty}$. Here, $\varphi_{(\mu,\nu)}^{\prime}$ is the Hadamard directional derivative of $\varphi$ in \eqref{Eqn:EmpClassicalColocFn} at the point $(\mu,\nu)$ acting on the limit process (see Corollary \ref{Cor:Th:ClassicColocConv}).
			
\noindent In Section \ref{Sec:EmpRes} we discuss two applications: 

\textit{Multimodal distributions (Section \ref{Subsec:multimodal}) } We consider a scenario where a source cloud of points is transported to a target cloud of points, corresponding to three clusters at different cost/distances from the source.  We illustrate this with two examples: the first with Gaussian distributed points in a plane with cost equal to the Euclidean distance, and the second with points on a circle with cost equal to the geodesic distance. The corresponding EOT confidence bands capture, in a visually and statistically sound manner, the three major spatial scales at which mass is transported.

\textit{Super-resolution microscopy (Section \ref{SubSec:RealDataAnalysis})}  In Figure \ref{fig:TomMicsection} (upper panel) assemblies of two mitochondrial proteins Tom20 and Mic60 are displayed. They have been recorded using STimulated Emission Depletion (STED) super-resolution microscopy (\cite{hell2007far}), a methodology which allows for discerning such proteins at a resolution far below the Abbe diffraction barrier delimiting conventional light microscopy to discern objects below a distance of about 250 - 500 nm. Besides the randomness inherent to the photon emission process of the photons recorded in the image, we suggest the use of these confidence bands as a tool for the validation of fast computational schemes based on subsampling. In the lower panel of Figure \ref{fig:TomMicsection} we display such confidence bands in \eqref{Eqn:confidenceinterval} for different levels of confidence $1-\alpha$, where $q_{1-\alpha}$ has been approximated by bootstrap. For further details, see Section \ref{SubSec:RealDataAnalysis}.

\subsubsection*{Computational aspects}
For all computational tasks in Section 4.1, we use the \emph{POT} package (\cite{flamary2021pot}) with \verb!Python! version 3.11.12, while for the computation of the optimal transport plan between STED images, we use the \verb!Julia! package \emph{MuSink} (\cite{Musink2025Staudt}) with \verb!Julia! version 1.11.6. This efficiently implements EOT between measures on uniform two-dimensional grids (i.e., well suited for images) via Sinkhorn iterations. All the code and data used to generate the figures in this document are available in the corresponding \verb!GitLab! repository (available at \url{https://gitlab.gwdg.de/arenasvelill/EEOT-Colocalization/}). To accelerate computation, we leverage the CUDA support provided by both packages.

		\begin{figure}[!htbp]
	\centering
	\includegraphics[width=0.9\textwidth]{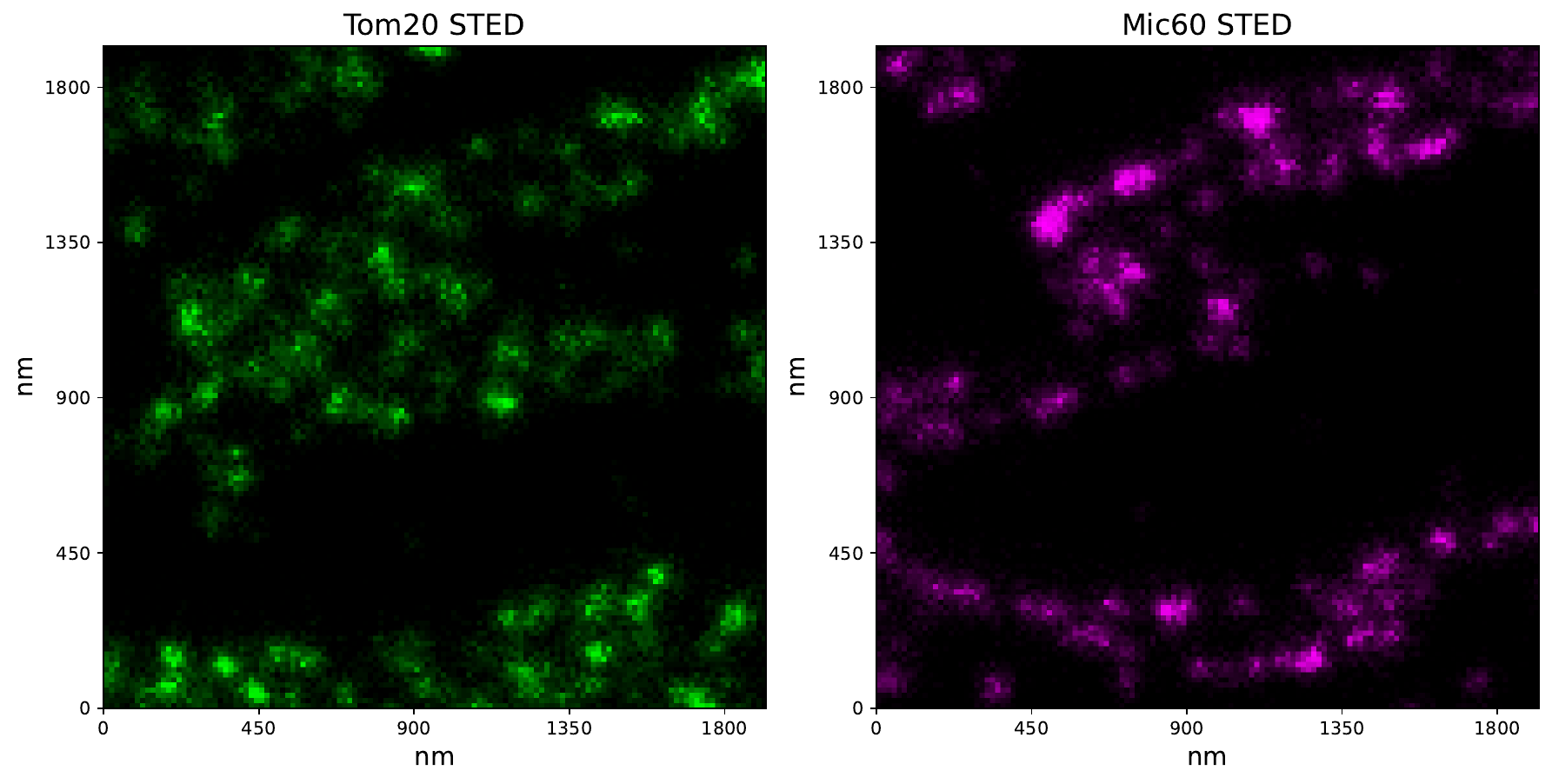}
	\includegraphics[width=0.8\textwidth]{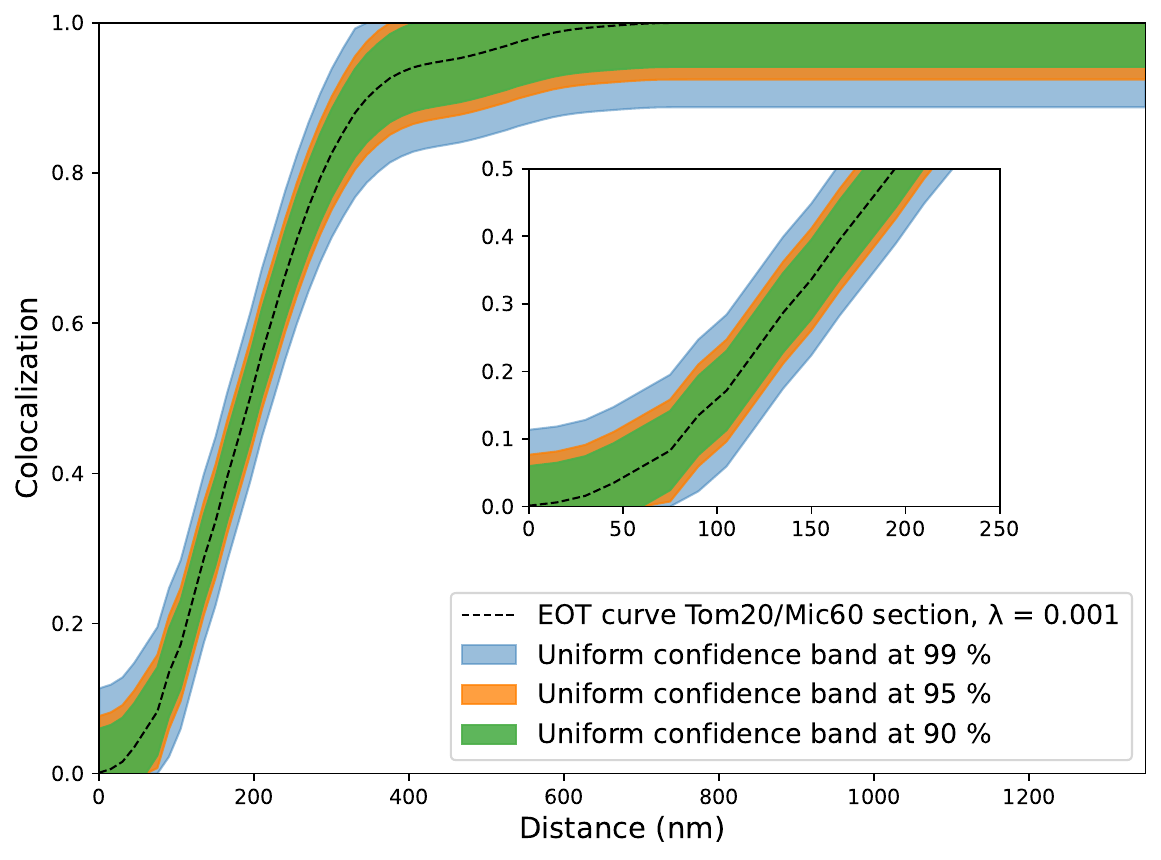}
	\caption{Top: \textit{STED nanoscopy images of Tom20 and Mic60:} Section of size $128 \times 128$ pixels generated using data from the Optimal Transport Colocalization (OTC) dataset in \cite{carla_tameling_2021}. Bottom:  Confidence bands: the dashed line correspond to the EOT curve between the protein assemblies in the above images (with regularization parameter $\lambda = 0.001$), with uniform bootstrap confidence bands at level $1-\alpha$, for $\alpha = 0.1, 0.05, 0.01$, based on the $n$ out of $n$ bootstrap with  $B=2000$ replications. }
	\label{fig:TomMicsection}
\end{figure}

	\subsection{Related work}
		Our work is mainly inspired by \cite{Klatt2020}, where an analysis of the asymptotic behavior of the entropic optimal transport plan (and the functionals thereof) and corresponding confidence intervals has been provided for measures with finite support. Note that $\varphi\left(\mu_{m},\nu_{n}\right)$ can be viewed as an estimator of $\pi_{\mu,\nu}^{\lambda}$ indexed by indicator functions $\mathbf{1}_{\{c\leq t\}}$. As it was seen in \cite{Gonzalez2022} and \cite{Gonzalez&Hundrieser2023}, for every $t\in\left[0,\|c\|_{\infty}\right]$, $\sqrt{n}\,\left(\pi_{\mu_{m},\nu_{n}}^{\lambda}-\pi_{\mu,\nu}^{\lambda}\right)\left(\mathbf{1}_{\{c\leq t\}}\right)$ converges weakly to a centered Gaussian distribution. That result is also obtained as a consequence of asymptotic theory to estimate EOT potentials in \eqref{Eqn:ImplicitRelationPotentials}, see \cite{Goldfeld2024}. Here, these results are extended to a uniform limit on the set of indicators and dropping the restriction on the support of the underlying measures. Inspired by the methodology proposed in \cite{Goldfeld2024}, differentiability results of \cite{Carlier&Laborde2020} are extended (see Theorem \ref{Lema:ourDiffDens}), which allows the application of the functional delta method to obtain the uniform weak limit of kernel functions. This includes colocalization curves (uniform convergence on the set of indicators). Finally, we refer to \cite{Liu2025} for a careful analysis of the unregularized optimal plan $\pi_{\mu,\nu}$ by the approximation of the EOT plan $\pi_{\mu,\nu}^{\lambda}$. In particular, the authors emphasize that regularization of the transport problem can introduce a bias. In Subsection \ref{SubSec:RealDataAnalysis}, we provide empirical evidence showing that this bias is asymptotically negligible when constructing uniform confidence bands for EOT in our application.
	\subsection{Outline of the paper}
		The structure of the paper unfolds as follows: In Section \ref{Sec:LimitDist} we introduce the technicalities necessary for our purposes. In particular, we generalized the concept of colocalization function to kernel functions. Furthermore, it contains the main theoretical result of this paper: The (weak) limit of the kernel-type estimator, including the colocalization function. This result is a consequence of the delta method and Hadamard directional differentiability of the functionals involved. Details are given in the Supplementary material. Section \ref{Sec:StatApplications} is devoted to explain the statistical methodology derived from the theoretical results of Section \ref{Sec:LimitDist}. Those tools are applied in Section \ref{Sec:EmpRes}, where several datasets (both simulated and real) are analyzed.
\section{Limit distribution of empirical EOT kernel function}\label{Sec:LimitDist}
	In this section we investigate the limit distribution of empirical colocalization curves. Firstly, introduce the following generalization of the colocalization curve.
	\begin{Def}\label{Def:GeneralizedColoc}
		Given $\mu,\nu\in\mathcal{P}(\mathcal{X})$, $\lambda>0$, and $u\in\operatorname{L}^{1}(\mu\otimes\nu)$, the \emph{EOT kernel function} is defined as
		\begin{equation*}
			\begin{aligned}
				\Phi(\mu,\nu)(u)&=\pi_{\mu,\nu}^{\lambda}(u)=\int_{\mathcal{X}^{2}}u(x,y)\,\operatorname{d}\!\pi_{\mu,\nu}^{\lambda}(x,y)
				\\
				&=\int_{\mathcal{X}^{2}}u(x,y)\,\xi\left(f_{\mu,\nu}^{\lambda},g_{\mu,\nu}^{\lambda}\right)(x,y)\,\operatorname{d}\!\mu\otimes\nu(x,y).
			\end{aligned}
		\end{equation*}
	\end{Def}
	Definition \ref{Def:GeneralizedColoc} may be read in the following terms: The operator $\Phi$ (or $\operatorname{EOT}$) maps the marginal measures $\mu$ and $\nu$ to a new space (functionals over a class $\mathcal{F}\subseteq\operatorname{L}^{1}(\mu\otimes\nu)$). By definition of entropic optimal transport \eqref{Eqn:EOT}, $\pi_{\mu,\nu}^{\lambda}\ll\mu\otimes\nu$ with Radon-Nikodym derivative \eqref{Eqn:Densitypi}. Since \eqref{Eqn:Densitypi} is bounded away from zero and infinity (see \cite[Lemma 1]{Goldfeld2024} or Lemma \ref{Lema:BoundsPotentials} in the Supplementary material), we conclude also that $\pi_{\mu,\nu}^{\lambda}\gg\mu\otimes\nu$, so $\operatorname{L}^{1}(\mu\otimes\nu)=\operatorname{L}^{1}\left(\pi_{\mu,\nu}^{\lambda}\right)$.
	
	A particular examples are colocalization curves given in \eqref{Eqn:ClassicalColocFn}, where $\pi_{\mu,\nu}^{\lambda}$ denotes the solution of \eqref{Eqn:EOT}. We write $\varphi(\mu,\nu)(t)=\pi_{\mu,\nu}^{\lambda}\left(\mathbf{1}_{\{c\leq t\}}\right)$. Further, if we denote by $\mathcal{F}_{I,c}=\left\{\mathbf{1}_{\{c\leq t\}}:t\in[0,\|c\|_{\infty}\right\}$, it is straightforward that $\pi_{\mu,\nu}^{\lambda}\in\ell^{\infty}\left(\mathcal{F}_{I,c}\right)$.
	\subsection{Assumptions}\label{Subsec:Assumptions}
		Before showing the main theoretical results of this work, we will discuss the assumptions stated in Section \ref{Subsec:Theory}. In particular, we will see that these are satisfied by a considerable amount of scenarios that appear in applications (see Section \ref{Sec:EmpRes}).
		
		First consider \ref{Itm:LCm} and \ref{Itm:Con} jointly. When $\mathcal{X}$ is a Polish space and the cost $c$ is bounded, \eqref{Eqn:EOT} (equivalently \eqref{Eqn:DualEOT}) has a unique solution, in particular, the optimal plan $\pi_{\mu,\nu}^{\lambda}$ exists (see \cite[Theorem 4.2]{Nutz2021}). \ref{Itm:LCm} and \ref{Itm:Con} constitute sufficient conditions to work in this framework. The former, \ref{Itm:LCm}, is satisfied by any closed subset of any finite dimensional complete topological vector spaces. In the scenarios presented in Section \ref{Sec:EmpRes}, it is always satisfied. Other authors such as in \cite{Goldfeld2024} or \cite{Gonzalez2022} require that $\mathcal{X}$ to be compact. Nevertheless, to the best of our knowledge, this property is too restrictive. Indeed, local compactness in \ref{Itm:LCm} is a sufficient condition to embed $\mathcal{P}(\mathcal{X})$ in the space of Radon measures on $\mathcal{X}$ (endowed with the topology of the total variation norm, see \cite[Chapter 7]{Folland2013}). Combined to the continuity of the cost $c$ in \ref{Itm:Con} provides sufficient condition for the Hadamard directional differentiability of $\Phi$ in Definition \ref{Def:GeneralizedColoc} (see Theorem \ref{Th:DiffPhi} in the Supplementary material).
		
		When \ref{Itm:Bal} is satisfied, we say that the \emph{sampling scheme is balanced}. Together with \ref{Itm:Dnk}, it implies that
		\begin{equation}\label{Eqn:LimitBB}
			\left(\begin{array}{c}
				\sqrt{\frac{n}{m+n}}\,\mathbb{G}_{m}^{\mu}
				\\
				\sqrt{\frac{m}{m+n}}\,\mathbb{G}_{n}^{\nu}
			\end{array}\right)\rightsquigarrow\left(\begin{array}{c}
				\sqrt{1-\tau}\,\mathbb{G}^{\mu}
				\\
				\sqrt{\tau}\,\mathbb{G}^{\nu}
			\end{array}\right),
		\end{equation}
		in $\ell^{\infty}\left(\mathcal{F}_{\mu}\right)\times\ell^{\infty}\left(\mathcal{F}_{\nu}\right)$. \ref{Itm:Dnk} and other Donsker-like properties are well studied in literature. For sufficient conditions on the Donsker property for $\mathcal{C}_{\operatorname{b}}^{s}(\mathfrak{X})_{1}$ we refer to \cite[Theorems 2.7.1, 2.7.2 and 2.7.3]{van_der_Vaart&Wellner2023}. The next result shows that for classes of functions $\mathcal{F}\subseteq\operatorname{L}^{1}(\mu\otimes\nu)$ that usually appear in this context, $\mathcal{F}_{\mu}$ and $\mathcal{F}_{\nu}$ enjoy the Donsker property.
		\begin{Th}\label{Th:Donskerity}
			Assume \ref{Itm:Bnd}, and either
			\begin{enumerate}
				\item The class $\mathcal{F}$ is the class of indicators $\mathcal{F}_{I,c}=\left\{\mathbf{1}_{\{c\leq t\}}:t\in\left[0,\|c\|_{\infty}\right]\right\}$, or
				\item Take $s\geq0$, $\mathcal{X}\subset\mathbb{R}^{l}$, $l\in\mathbb{N}\setminus\{0\}$, bounded, convex and with non-void interior and $c\in\mathcal{C}_{\operatorname{b}}^{s}\left(\mathcal{X}^{2}\right)$. The class $\mathcal{F}$ is $\mathcal{C}_{\operatorname{b}}^{s}\left(\mathcal{X}^{2}\right)_{1}$.
			\end{enumerate}
			Then, $\mathcal{F}_{\mu}$ and $\mathcal{F}_{\nu}$ enjoy the Donsker property.
		\end{Th}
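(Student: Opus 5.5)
We prove the claim for $\mathcal{F}_{\mu}$; the case of $\mathcal{F}_{\nu}$ is symmetric. Throughout we write
\begin{equation*}
h_u(x)=\int_{\mathcal{X}}u(x,y)\,\xi\left(f_{\mu,\nu}^{\lambda},g_{\mu,\nu}^{\lambda}\right)(x,y)\,\operatorname{d}\!\nu(y),
\end{equation*}
so that $\mathcal{F}_{\mu}=\{h_u:u\in\mathcal{F}\}$. Under \ref{Itm:Con}, the potentials are bounded (Lemma \ref{Lema:BoundsPotentials}, equivalently \cite[Lemma 1]{Goldfeld2024}). Hence $\xi:=\xi(f_{\mu,\nu}^{\lambda},g_{\mu,\nu}^{\lambda})$ satisfies $0<a\le \xi\le b<\infty$. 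Combined with \ref{Itm:Bnd}, this gives $\|h_u\|_\infty\le Cb$, so $\mathcal{F}_{\mu}$ has a constant envelope. The two cases then call for different Donsker criteria.

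\emph{Case 1 (indicators).} Here $h_t(x)=\int \mathbf{1}_{\{c(x,y)\le t\}}\,\xi(x,y)\,\operatorname{d}\!\nu(y)$ for $t\in[0,\|c\|_\infty]$. For each fixed $x$, the map $t\mapsto h_t(x)$ is nondecreasing, and it takes values in $[0,b]$. We would show that $\{h_t\}$ is a Donsker class by bounding its bracketing numbers. Pick a grid $0=t_0<t_1<\dots<t_N=\|c\|_\infty$ with
\begin{equation*}
\mu\bigl(h_{t_{k}}-h_{t_{k-1}}\bigr)=\pi_{\mu,\nu}^{\lambda}\bigl(\mathbf{1}_{\{t_{k-1}<c\le t_{k}\}}\bigr)\le\varepsilon^2.
\end{equation*}
This is the usual construction for distribution functions applied to the law of $c$ under $\pi^\lambda_{\mu,\nu}$; jumps of this law are handled by including left limits $h_{t^-}$ as bracket endpoints, exactly as in the proof of the classical Glivenko--Cantelli/Donsker theorem for the empirical distribution function. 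Monotonicity then yields brackets $[h_{t_{k-1}},h_{t_k}]$ (or with left limits). Since $0\le h_{t_k}-h_{t_{k-1}}\le b$, their $L^2(\mu)$-size satisfies $\|h_{t_k}-h_{t_{k-1}}\|_{L^2(\mu)}^2\le b\,\varepsilon^2$. It follows that $N_{[\,]}(\varepsilon,\mathcal{F}_\mu,L^2(\mu))=O(\varepsilon^{-2})$, the bracketing integral is finite, and \cite[Theorem 2.5.6]{van_der_Vaart&Wellner2023} gives the Donsker property. This step is essentially routine.

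\emph{Case 2 (Hölder kernels).} Here we aim to show that $\mathcal{F}_\mu\subset M\,\mathcal{C}_{\operatorname{b}}^{s}(\mathcal{X})_{1}$ for some constant $M$, and then invoke \ref{Itm:Dnk} or \cite[Theorems 2.7.1]{van_der_Vaart&Wellner2023} on the bounded convex set $\mathcal{X}\subset\mathbb{R}^l$. Two facts are needed.
\begin{enumerate}
\item The potential $f_{\mu,\nu}^{\lambda}$ inherits the smoothness of $c$. From \eqref{Eqn:ImplicitRelationPotentials}, $f=-\lambda\log\int e^{(g(y)-c(\cdot,y))/\lambda}\,\operatorname{d}\!\nu(y)$. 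Differentiating under the integral, with the integrand bounded away from $0$, shows that $f\in\mathcal{C}_{\operatorname{b}}^{s}(\mathcal{X})$. Its norm is controlled by $\|c\|_{\mathcal{C}^s}$, $\lambda$ and $s$, through Faà di Bruno type bounds for $\exp$ and $\log$ composed with smooth functions; this is the argument of \cite{Genevay2019,Goldfeld2024}. Consequently $x\mapsto\xi(x,y)$ lies in $\mathcal{C}^s_{\operatorname{b}}$ uniformly in $y$.
\item Since $\mathcal{C}^s_{\operatorname{b}}$ is a Banach algebra, $u(\cdot,y)\,\xi(\cdot,y)$ has $\mathcal{C}^s$-norm bounded by $K\|u\|_{\mathcal{C}^s}\le K$, uniformly in $y$ and in $u\in\mathcal{F}$. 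Integrating in $y$ against $\nu$ preserves these bounds: derivatives and Hölder quotients pass under the integral by dominated convergence.
\end{enumerate}
Hence $h_u\in K\,\mathcal{C}^s_{\operatorname{b}}(\mathcal{X})_1$, and the Donsker property of a scaled Donsker class yields the claim. For $s$ too small relative to $l$ for the entropy-integral criterion, we would instead rely on the assumed Donsker property of $\mathcal{C}_{\operatorname{b}}^{s}(\mathcal{X})_1$ in \ref{Itm:Dnk}.

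\textbf{Main obstacle.} We expect the hardest step to be the uniform $\mathcal{C}^s$ regularity of $f_{\mu,\nu}^{\lambda}$ for non-integer $s>1$, i.e.\ controlling the Hölder seminorm of the highest derivative of $\log\int e^{(g-c)/\lambda}\,\operatorname{d}\!\nu$. The approach we would try first is to write derivatives of $f$ as polynomials in conditional moments of derivatives of $c$ under the kernel $e^{(f+g-c)/\lambda}\,\operatorname{d}\!\nu(y)$, and to estimate their Hölder increments term by term.
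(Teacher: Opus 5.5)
Your proposal is correct and follows the paper's proof. Case 1 is the monotone-in-$t$ bracketing construction that the paper isolates as Lemma \ref{Lema:AuxiliarIndicators}: you grid via the law of $c$ under $\pi_{\mu,\nu}^{\lambda}$ and pass from $\operatorname{L}^{1}$ to $\operatorname{L}^{2}$ by boundedness, where the paper grids via $r\mapsto\|M(\cdot,r)\|_{\operatorname{L}^{2}(\eta)}$. Case 2 is the paper's reduction to $\mathcal{F}_{\mu},\mathcal{F}_{\nu}$ being bounded subsets of $\mathcal{C}_{\operatorname{b}}^{s}(\mathcal{X})$ plus \cite[Theorem 2.7.1]{van_der_Vaart&Wellner2023}, with you supplying the regularity details the paper omits.

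Your caveat about small $s$ is justified and is passed over in the paper's one-line proof as well. Since $\int_{\mathcal{X}}\xi\left(f_{\mu,\nu}^{\lambda},g_{\mu,\nu}^{\lambda}\right)(x,y)\,\operatorname{d}\!\nu(y)=1$, taking $u(x,y)=v(x)$ gives $\mathcal{C}_{\operatorname{b}}^{s}(\mathcal{X})_{1}\subseteq\mathcal{F}_{\mu}\subseteq K\,\mathcal{C}_{\operatorname{b}}^{s}(\mathcal{X})_{1}$. So Theorem 2.7.1 only closes the argument for $s>l/2$, and for smaller $s$ the first half of \ref{Itm:Dnk} is genuinely needed.
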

		It is worth to make some comments about the classes of functions that appear in the previous theorem. The integral probability metric generated by $\mathcal{F}_{I,c}$ in the space of probability measures $\mathcal{P}\left(\mathcal{X}^{2}\right)$ is the uniform metric. The requirement $c\in\mathcal{C}_{\operatorname{b}}^{s}\left(\mathcal{X}^{2}\right)$ is quite common in literature (see for instance \cite{Genevay2019} or \cite{Gonzalez2022}) and provides sufficient conditions for $f_{\mu,\nu}^{\lambda},g_{\mu,\nu}^{\lambda}\in\mathcal{C}_{\operatorname{b}}^{s}\left(\mathcal{X}^{2}\right)$. Furthermore, the integral probability metric
		\begin{equation*}
			\zeta\left(\eta_{1},\eta_{2}\right)=\sup_{u\in \mathcal{C}_{\operatorname{b}}^{s}\left(\mathcal{X}^{2}\right)_{1}}\left(\left|\int_{\mathcal{X}}u\,\operatorname{d}\!\left(\eta_{1}-\eta_{2}\right)\right|\right),
		\end{equation*}
		is known as the Zolotarev $s$-metric (see \cite{Zolotarev1983}).
		
		Finally, we remark that \ref{Itm:Bnd} can be weakened to the assumption that the envelope of the class $\mathcal{F}$ is in $\operatorname{L}^{1}(\mu\otimes\nu)$.
	\subsection{Asymptotic result for colocalization curves}\label{Sec:AsympColoc}
		In this subsection we will focus on the main result of this paper: The asymptotic behaviour of the plug-in estimator $\Phi\left(\mu_{m},\nu_{n}\right)$ of the EOT kernel function $\Phi(\mu,\nu)$ in Definition \ref{Def:GeneralizedColoc}. Recall that given independent samples $X_{1},\ldots,X_{m}\sim\mu$ and $Y_{1},\ldots,Y_{n}\sim\nu$, we defined in \eqref{Eqn:JointEmpProcess} the joint empirical process. Let us denote $-\mu+\mathcal{P}(\mathcal{X})=\left\{-\mu+\eta:\eta\in\mathcal{P}(\mathcal{X})\right\}$ and similarly for $\nu$.
		\begin{Th}[Weak limit of empirical EOT kernel functions]\label{Th:ColocConv}
			Assume \ref{Itm:LCm}, \ref{Itm:Con}, \ref{Itm:Dnk}, \ref{Itm:Bal} and \ref{Itm:Bnd}. Then for $\Phi$ in \eqref{Eqn:GeneralizedColocMap}
			\begin{equation*}
				\sqrt{\frac{m\,n}{m+n}}\,\left(\Phi\left(\mu_{m},\nu_{n}\right)-\Phi(\mu,\nu)\right)\rightsquigarrow\Phi_{(\mu,\nu)}^{\prime}\left(\sqrt{1-\tau}\,\mathbb{G}^{\mu},\sqrt{\tau}\,\mathbb{G}^{\nu}\right),
			\end{equation*}
			where $\rightsquigarrow$ denotes weak convergence of distributions, $\mathbb{G}^{\mu}$ and $\mathbb{G}^{\nu}$ are independent $\mu$, $\nu$-Brownian bridges on $\mathcal{F}_{\mu}$ and $\mathcal{F}_{\nu}$, respectively; and $\Phi_{(\mu,\nu)}^{\prime}$ is the Hadamard directional derivative of $\Phi$, defined as the functional
			\begin{equation}
				\begin{aligned}
					&\Phi_{(\mu,\nu)}^{\prime}\left(h^{\mu},h^{\nu}\right)(u)=\mathcal{I}_{1}(u)+\mathcal{I}_{2}(u),\quad h^{\mu}\in-\mu+\mathcal{P}(\mathcal{X}),h^{\nu}\in-\nu+\mathcal{P}(\mathcal{X}),\quad u\in\mathcal{F},
					\\
					&\begin{aligned}
						\mathcal{I}_{1}(u)&=\frac{1}{\lambda}\,\int_{\mathcal{X}^{2}}u(x,y)\,\xi\left(f_{\mu,\nu}^{\lambda},g_{\mu,\nu}^{\lambda}\right)(x,y)\,\left(\psi_{(\mu,\nu)}^{\prime}\left(h^{\mu},h^{\nu}\right)^{(1)}(x)\right.
						\\
						&\quad\left.+\psi_{(\mu,\nu)}^{\prime}\left(h^{\mu},h^{\nu}\right)^{(2)}(y)\right)\,\operatorname{d}\!\mu\otimes\nu(x,y),
					\end{aligned}
					\\
					&\mathcal{I}_{2}(u)=\int_{\mathcal{X}^{2}}u(x,y)\,\xi\left(f_{\mu,\nu}^{\lambda},g_{\mu,\nu}^{\lambda}\right)(x,y)\,\operatorname{d}\!\left(\mu\otimes h^{\nu}+h^{\mu}\otimes\nu\right)(x,y),
				\end{aligned}
			\end{equation}
			where the superindex $^{(i)}$ for $i\in\{1,2\}$ denotes the projection on the $i$-th coordinate.
		\end{Th}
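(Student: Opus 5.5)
The argument is an application of the extended functional delta method for Hadamard directionally differentiable maps (Römisch / Shapiro form, as in \cite[Section 3.10]{van_der_Vaart&Wellner2023}). The map is $\Phi:(\mu',\nu')\mapsto \pi^{\lambda}_{\mu',\nu'}\in\ell^\infty(\mathcal{F})$, defined on the convex set $\mathcal{P}(\mathcal{X})\times\mathcal{P}(\mathcal{X})$, which sits inside a normed space of pairs of functionals. We need two ingredients: (i) weak convergence of the rescaled inputs with a separable limit, and (ii) Hadamard directional differentiability of $\Phi$ at $(\mu,\nu)$ tangentially to the closure of the cone generated by $(-\mu+\mathcal{P}(\mathcal{X}))\times(-\nu+\mathcal{P}(\mathcal{X}))$, with derivative $\Phi'_{(\mu,\nu)}$ as displayed. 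Ingredient (i) is immediate: by \ref{Itm:Dnk}, \ref{Itm:Bal} and independence of the samples, \eqref{Eqn:LimitBB} holds in $\ell^\infty(\mathcal{F}_\mu)\times\ell^\infty(\mathcal{F}_\nu)$ and also in $\ell^\infty(\mathcal{C}^s_{\operatorname{b}}(\mathcal{X})_1)^2$. The limit is tight, with uniformly continuous paths. Moreover $\sqrt{mn/(m+n)}\to\infty$ and $(\mu_m,\nu_n)$ lies in the domain of $\Phi$.

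For (ii) I factor $\Phi=\Xi\circ(\mathrm{id},\psi)$, where $\psi$ is the duality map and $\Xi(\mu',\nu',f,g)(u)=\int u\,\xi(f,g)\,d\mu'\otimes\nu'$. I then use the chain rule for Hadamard directional derivatives. The key analytic input is the differentiability of $\psi$ (Theorem \ref{Lema:ourDiffDens}, the extension of \cite{Carlier&Laborde2020} quoted in the related-work section). It says that $\psi$ is Hadamard directionally differentiable into $\ell^\infty(\mathcal{X})^2$ (or $\mathcal{C}^s_{\operatorname{b}}$) along measure directions, with derivative $\psi'_{(\mu,\nu)}$ obtained by linearising \eqref{Eqn:ImplicitRelationPotentials}. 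Concretely, $\psi'$ solves the linear system given by $(\mathrm{Id}-\text{Schrödinger-type averaging operator})$ applied to the potentials, with right-hand side depending on $(h^\mu,h^\nu)$. That operator is invertible on the mean-zero normalised subspace because the density $\xi$ is bounded away from $0$ and $\infty$ (Lemma \ref{Lema:BoundsPotentials}). Local compactness \ref{Itm:LCm} and continuity of $c$ \ref{Itm:Con} are what allow perturbations $\mu+t_kh_k$ with $h_k\to h$ to be controlled on the relevant function classes.

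Differentiating $\Xi$ is a product-rule computation. Take $t_k\downarrow0$, $h_k\to h$, and write $(f_k,g_k)=\psi(\mu+t_kh^\mu_k,\nu+t_kh^\nu_k)$. Then
\begin{equation*}
\frac{\Phi(\mu+t_kh^\mu_k,\nu+t_kh^\nu_k)(u)-\Phi(\mu,\nu)(u)}{t_k}
\end{equation*}
splits into two pieces. The first is $\int u\,\frac{\xi(f_k,g_k)-\xi(f,g)}{t_k}\,d\mu\otimes\nu$. Since $\exp$ is smooth and the potentials are uniformly bounded, this converges to $\mathcal{I}_1(u)$ uniformly in $u\in\mathcal{F}$ using \ref{Itm:Bnd}. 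The second collects the cross terms $\int u\,\xi(f_k,g_k)\,d(\mu\otimes h^\nu_k+h^\mu_k\otimes\nu+t_kh^\mu_k\otimes h^\nu_k)$, and this converges to $\mathcal{I}_2(u)$. In the limit, integrating out one variable shows that $\mathcal{I}_2(u)=h^\mu(F_u)+h^\nu(G_u)$ with $F_u\in\mathcal{F}_\mu$, $G_u\in\mathcal{F}_\nu$. This is exactly why the input space is $\ell^\infty(\mathcal{F}_\mu)\times\ell^\infty(\mathcal{F}_\nu)$. It also explains why the derivative must be evaluated on processes indexed by $\mathcal{F}_\mu,\mathcal{F}_\nu$ (and on H\"older classes for the $\psi'$ term). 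Once differentiability holds, the delta method gives the displayed limit, and the Brownian bridges are independent because the samples are.

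The main obstacle is the cross term in $\mathcal{I}_2$ together with the topology mismatch. The derivative must be continuous with respect to the norm in which the empirical process converges, namely a supremum over a Donsker class. It cannot be with respect to total variation, since $\mu_m-\mu$ does not converge in total variation. The difficulty is that $\xi(f_k,g_k)$ varies with $k$, so the difference $\int u(\xi(f_k,g_k)-\xi(f,g))\,d(\mu\otimes h^\nu_k)$ is not obviously controlled by $\|h_k^\nu\|_{\mathcal{F}_\nu}$. My first attempt is to bound this difference by $\|\xi(f_k,g_k)-\xi(f,g)\|_\infty$ times a total-mass bound on $h_k^\nu$, which is available because $h_k^\nu\in t_k^{-1}(-\nu+\mathcal{P})$. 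This needs care, since that mass can blow up like $1/t_k$. I would therefore use the fact that $\|\xi(f_k,g_k)-\xi(f,g)\|_\infty=O(t_k)$ by differentiability of $\psi$, and that $t_k\|h^\nu_k\|_{TV}\le2$, so the product is $O(t_k)$ and vanishes. The remaining term, $\int u\,\xi(f,g)\,d(\mu\otimes(h^\nu_k-h^\nu))$, is at most $\|h^\nu_k-h^\nu\|_{\mathcal{F}_\nu}$, which goes to zero by definition of $\mathcal{F}_\nu$.
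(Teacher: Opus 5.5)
Your architecture is the same as the paper's: the delta method plus Hadamard directional differentiability of $\Phi$, proved by the three-term split from the proof of Theorem~\ref{Th:DiffPhi}. That split is the $\xi$-difference against $\mu\otimes\nu$, then $\xi(f_k,g_k)$ against $\mu\otimes h_k^\nu+h_k^\mu\otimes\nu$, then $t_k\int u\,\xi(f_k,g_k)\,d(h_k^\mu\otimes h_k^\nu)$. The gap is in the step you single out yourself. From $\|\xi(f_k,g_k)-\xi(f,g)\|_\infty=O(t_k)$ and $\|h_k^\nu\|_{\mathrm{TV}}\le 2/t_k$ you only get $\bigl|\int u\,(\xi(f_k,g_k)-\xi(f,g))\,d(\mu\otimes h_k^\nu)\bigr|\le C\,O(t_k)\cdot 2t_k^{-1}=O(1)$, not $O(t_k)$. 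The factor $t_k$ is already used up in $t_k\|h_k^\nu\|_{\mathrm{TV}}\le 2$. Expanding $\xi(f_k,g_k)-\xi(f,g)=t_k\,\Xi'_{(\mu,\nu)}(h)+o(t_k)$ makes the remainder $o(1)$. The leading part $\int\!\!\int u\,\Xi'_{(\mu,\nu)}(h)\,d\mu\,d(t_kh_k^\nu)$, however, stays $O(1)$ unless $h_k^\nu$ is bounded on functions such as $y\mapsto\psi'_{(\mu,\nu)}(h)^{(2)}(y)\int u(x,y)\,\xi(f,g)(x,y)\,d\mu(x)$. These need not lie in the class over which $h_k^\nu$ converges. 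The product term, which you fold into ``the cross terms converge to $\mathcal{I}_2$'' without argument, is worse: mass bounds give only $t_k\cdot O(t_k^{-2})=O(t_k^{-1})$. Finally, Lemma~\ref{Lema:ourDiffDens} does not supply your input $\|\xi(f_k,g_k)-\xi(f,g)\|_\infty=O(t_k)$ along such sequences. That lemma is proved in the topology of $\ell^\infty(\mathcal{C}_0(\mathcal{X})_1)$, which on Radon measures is exactly the total variation norm. So when $\|t_kh_k\|_{\mathrm{TV}}$ is of order one (e.g.\ $t_kh_k^\mu=\mu_m-\mu$ with $\mu$ non-atomic), it gives nothing beyond the trivial bound.

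The paper disposes of $\mathcal{T}_2$ and $\mathcal{T}_3$ by bounded mass. It takes $h_j^\mu\in-\mu+\mathcal{P}(\mathcal{X})$, so $\|h_j^\mu\|_{\mathrm{TV}}\le 2$; the first part of $\mathcal{T}_2$ then vanishes by continuity of $\psi$, and $\mathcal{T}_3=O(t_j)$. As you correctly note, this is not the situation along $h_k=\sqrt{mn/(m+n)}\,(\mu_m-\mu,\nu_n-\nu)$, so you cannot borrow that argument, and your proposal needs an additional ingredient here. One option is differentiability of $\psi$ in a Donsker-class norm (typically a H\"older ball, which requires smoothness of $c$ beyond \ref{Itm:Con}). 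This must come with a norm on $\mathbb{D}$ that also controls the classes obtained from $u\,\Xi'_{(\mu,\nu)}(h)$ and $u\,\xi(f_k,g_k)$ after integrating out one variable. Another option is to treat the two remainder terms stochastically, showing via asymptotic equicontinuity over suitable Donsker classes that they are $o_P(1)$ after scaling by $\sqrt{mn/(m+n)}$. Without such a step, the argument as written does not prove the theorem.
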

		As a consequence of Theorem \ref{Th:ColocConv}, when $\mathcal{F}_{I,c}=\left\{\mathbf{1}_{\{c\leq t\}}:t\in\left[0,\|c\|_{\infty}\right]\right\}$ is taken, we obtain the weak convergence of colocalization curves uniformly in $t\in\left[0,\|c\|_{\infty}\right]$. To this end, note that \ref{Itm:Bnd} is obviously fulfilled and \ref{Itm:Dnk} is given by Theorem \ref{Th:Donskerity}.
		\begin{Cor}[Weak convergence of empirical colocalization curve]\label{Cor:Th:ClassicColocConv}
			Assume \ref{Itm:Con}, \ref{Itm:LCm} and \ref{Itm:Bal}. Then for $\varphi$ in \eqref{Eqn:ClassicalColocFn}
			\begin{equation}\label{Eqn:ClassicalColocConv}
				\sqrt{\frac{m\,n}{m+n}}\,\left(\varphi\left(\mu_{m},\nu_{n}\right)(t)-\varphi(\mu,\nu)(t)\right)\rightsquigarrow\varphi_{(\mu,\nu)}^{\prime}\left(\sqrt{1-\tau}\,\mathbb{G}^{\mu},\sqrt{\tau}\,\mathbb{G}^{\nu}\right)(t),
			\end{equation}
			uniformly on $t\in\left[0,\|c\|_{\infty}\right]$, where
			\begin{equation*}
				\varphi_{(\mu,\nu)}^{\prime}\left(\sqrt{1-\tau}\,\mathbb{G}^{\mu},\sqrt{\tau}\,\mathbb{G}^{\nu}\right)(t)=\Phi_{(\mu,\nu)}^{\prime}\left(\sqrt{1-\tau}\,\mathbb{G}^{\mu},\sqrt{\tau}\,\mathbb{G}^{\nu}\right)\left(\mathbf{1}_{\{c\leq t\}}\right).
			\end{equation*}
		\end{Cor}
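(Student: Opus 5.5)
The corollary is Theorem \ref{Th:ColocConv} applied to the class $\mathcal{F}=\mathcal{F}_{I,c}=\left\{\mathbf{1}_{\{c\leq t\}}:t\in\left[0,\|c\|_{\infty}\right]\right\}$, identifying $\ell^{\infty}(\mathcal{F}_{I,c})$ with $\ell^{\infty}([0,\|c\|_{\infty}])$ through $t\mapsto\mathbf{1}_{\{c\leq t\}}$. Theorem \ref{Th:ColocConv} assumes \ref{Itm:LCm}, \ref{Itm:Con}, \ref{Itm:Bal}, \ref{Itm:Bnd} and \ref{Itm:Dnk}. The first three are hypotheses of the corollary, so only \ref{Itm:Bnd} and \ref{Itm:Dnk} need checking for this class.

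\ref{Itm:Bnd} holds trivially, since $\|\mathbf{1}_{\{c\leq t\}}\|_{\infty}\leq1$ and any $C>1$ works. The part of \ref{Itm:Dnk} concerning $\mathcal{F}_{\mu}$ and $\mathcal{F}_{\nu}$ follows directly from Theorem \ref{Th:Donskerity}, case 1, which uses only \ref{Itm:Bnd}. The condition in \ref{Itm:Dnk} that $\mathcal{C}_{\operatorname{b}}^{s}(\mathcal{X})_{1}$ be $\mu$- and $\nu$-Donsker is the point to watch. I would handle it as follows. In the proof of Theorem \ref{Th:ColocConv}, the Hölder class only serves to embed the marginal processes into a space on which $\psi$ is Hadamard differentiable. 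Its role for the limit is to give the joint convergence \eqref{Eqn:LimitBB}, and on $\ell^{\infty}(\mathcal{F}_{\mu})\times\ell^{\infty}(\mathcal{F}_{\nu})$ that convergence already follows from case 1 of Theorem \ref{Th:Donskerity} together with independence of the samples and \ref{Itm:Bal}. Failing that, I would read the corollary as implicitly carrying this standing part of \ref{Itm:Dnk}, as the remark before the corollary suggests ("\ref{Itm:Dnk} is given by Theorem \ref{Th:Donskerity}").

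With all assumptions verified, Theorem \ref{Th:ColocConv} gives weak convergence in $\ell^{\infty}(\mathcal{F}_{I,c})$:
\begin{equation*}
\sqrt{\frac{m\,n}{m+n}}\left(\Phi(\mu_{m},\nu_{n})-\Phi(\mu,\nu)\right)\rightsquigarrow\Phi_{(\mu,\nu)}^{\prime}\left(\sqrt{1-\tau}\,\mathbb{G}^{\mu},\sqrt{\tau}\,\mathbb{G}^{\nu}\right).
\end{equation*}
The map $T:\ell^{\infty}(\mathcal{F}_{I,c})\to\ell^{\infty}([0,\|c\|_{\infty}])$, $T(z)(t)=z(\mathbf{1}_{\{c\leq t\}})$, is linear and $1$-Lipschitz in the supremum norm, hence continuous. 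By the continuous mapping theorem the weak convergence transfers to $\ell^{\infty}([0,\|c\|_{\infty}])$, which is exactly convergence uniform in $t$. The limit is the one in \eqref{Eqn:ClassicalColocConv}, because $\varphi(\cdot,\cdot)(t)=\Phi(\cdot,\cdot)(\mathbf{1}_{\{c\leq t\}})$ and $\varphi^{\prime}$ is defined by the same composition.

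The only genuine obstacle is the bookkeeping of \ref{Itm:Dnk} described in the second paragraph. Everything else is a direct specialization of Theorem \ref{Th:ColocConv}.
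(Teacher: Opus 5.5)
Your reduction matches the paper's: the corollary is Theorem \ref{Th:ColocConv} with $\mathcal{F}=\mathcal{F}_{I,c}$, with \ref{Itm:Bnd} trivial and the $\mathcal{F}_{\mu},\mathcal{F}_{\nu}$ part of \ref{Itm:Dnk} supplied by case 1 of Theorem \ref{Th:Donskerity}. Your continuous-mapping step via $T$ only makes explicit the identification with $\ell^{\infty}([0,\|c\|_{\infty}])$ that the paper leaves implicit. You are also right that the $\mathcal{C}_{\operatorname{b}}^{s}(\mathcal{X})_{1}$-Donsker requirement does not follow from the corollary's hypotheses. The paper's remark that \ref{Itm:Dnk} ``is given by Theorem \ref{Th:Donskerity}'' only covers $\mathcal{F}_{\mu},\mathcal{F}_{\nu}$, so that requirement is tacitly assumed there.

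However, your first way of disposing of it does not work. The delta method needs weak convergence in the same normed space $\mathbb{D}$ in which Hadamard differentiability of $\Phi$ was established. That norm contains, besides $\mathcal{F}_{\mu},\mathcal{F}_{\nu}$, an extra class that drives the differentiability of the potentials $\psi$. In Theorem \ref{Th:DiffPhi} this class is $\mathcal{C}_{0}(\mathcal{X})_{1}$; the proof of Theorem \ref{Th:ColocConv} uses the Hölder class in its place. The linearization \eqref{Eqn:DiffPsi} involves $x\mapsto\int_{\mathcal{X}}\xi(0,g)(x,y)\,\operatorname{d}\!h^{\nu}(y)$ uniformly in $x$. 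Convergence $h_{j}^{\nu}\to h^{\nu}$ on $\mathcal{F}_{\nu}$ (indexed by $t$, not by $x$) gives no control over such integrals. So \eqref{Eqn:LimitBB} on $\ell^{\infty}(\mathcal{F}_{\mu})\times\ell^{\infty}(\mathcal{F}_{\nu})$ alone would suffice only if $\Phi$ were Hadamard differentiable with respect to that weaker norm, and nothing in the paper provides this. The role you concede for the Hölder class, making $\psi$ differentiable, is exactly why it cannot be dropped.

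Your fallback (b) is the correct reading: carry the Hölder-class Donsker condition as a hypothesis, ideally stated explicitly. With it, your argument is complete and coincides with the paper's.
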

\section{Statistical applications}\label{Sec:StatApplications}
	Based on Theorem \ref{Th:ColocConv} and by virtue of the (extended) continuous mapping theorem, we can provide uniform confidence bands for the general EOT kernel function in \eqref{Eqn:GeneralizedColocMap} and in particular for the colocalization function $\varphi(\mu,\nu)$ in \eqref{Eqn:ClassicalColocFn}, recall Corollary \ref{Cor:Th:ClassicColocConv}. A practical difficulty arises as the explicit limit requires the estimation of the EOT potentials in \eqref{Eqn:DualEOT} as well as its derivative with respect to the measures $\mu$ and $\nu$ (see \cite{Gonzalez&Hundrieser2023}, \cite{Goldfeld2024}, and references therein). Alternatively, in this work we propose a different approach to bypass this obstacle: Bootstrapping.
	
	For the sake of completeness, let us firstly describe the bootstrapping procedure. Given independent samples $X_{1},\ldots,X_{m}\sim\mu$ and $Y_{1},\ldots,Y_{n}\sim\nu$ and its associated empirical measures $\mu_{m}$ and $\nu_{n}$ in \eqref{Eqn:EmpMeas} the bootstrap distribution is computed as follows: Take independent samples $X_{1}^{\ast},\ldots,X_{m^{\ast}}^{\ast}\sim\mu_{m}$ and $Y_{1}^{\ast},\ldots,Y_{n^{\ast}}^{\ast}\sim\nu_{n}$, with $m^{\ast},n^{\ast}\in\mathbb{N}$. For simplicity, let's fix $m^{\ast}=m$ and $n^{\ast}=n$. Denote by $\mu_{m}^{\ast}$ and $\nu_{n}^{\ast}$ the (bootstrap) empirical measures associated to the $^{\ast}$ samples and by
	\begin{equation}\label{Eqn:JointBootsProcess}
		\left(\begin{array}{c}
			\sqrt{\frac{n}{m+n}}\,\mathbb{G}_{m}^{\mu,\ast}
			\\
			\sqrt{\frac{m}{m+n}}\,\mathbb{G}_{n}^{\nu,\ast}
		\end{array}\right)=\sqrt{\frac{m\,n}{m+n}}\,\left(\begin{array}{c}
			\mu_{m}^{\ast}-\mu_{m}
			\\
			\nu_{n}^{\ast}-\nu_{n}
		\end{array}\right),
	\end{equation}
	the joint bootstrap process. By \cite[Theorem 3.7.3]{van_der_Vaart&Wellner2023} the process in \eqref{Eqn:JointBootsProcess} converges weakly (conditional to the sample) and the limit is the same as in \eqref{Eqn:LimitBB} given \ref{Itm:Dnk}.
	
	Now, the bootstrap version of \eqref{Eqn:ClassicalColocConv} is defined as $\sqrt{\frac{m\,n}{m+n}}\,\left(\varphi\left(\mu_{m}^{\ast},\nu_{n}^{\ast}\right)-\varphi\left(\mu_{m},\nu_{n}\right)\right)$. Since the Hadamard directional derivative of the map $\Phi$ is linear (see Theorem \ref{Th:ColocConv} or Theorem \ref{Th:DiffPhi} in the Supplementary material), we infer that the bootstrap version of \eqref{Eqn:ClassicalColocConv} converges weakly (conditional to the sample, uniformly) and the limit is the same as the one given in Corollary \ref{Cor:Th:ClassicColocConv} (see \cite[Chapter 3.10]{van_der_Vaart&Wellner2023}). Hence, repeating this $B$ times we can approximate the distribution of $\varphi_{(\mu,\nu)}^{\prime}\left(\sqrt{1-\tau}\,\mathbb{G}^{\mu},\sqrt{\tau}\,\mathbb{G}^{\nu}\right)$.
	
	By definition, the asymptotic $1-\alpha$-confidence band is the set of functions $u\in\ell^{\infty}\left(\left[0,\|c\|_{\infty}\right]\right)$ that satisfy $\left\|u-\varphi\left(\mu_{m},\nu_{n}\right)\right\|_{\infty}\leq\sqrt{\frac{m+n}{m\,n}}\,q_{1-\alpha}$, where
	\begin{equation*}
		\mathbf{P}\left(\left\|\varphi_{(\mu,\nu)}^{\prime}\left(\sqrt{1-\tau}\,\mathbb{G}^{\mu},\sqrt{\tau}\,\mathbb{G}^{\nu}\right)\right\|_{\infty}\leq q_{1-\alpha}\right)=1-\alpha.
	\end{equation*}
	In other words, $q_{1-\alpha}$ is the $1-\alpha$ quantile of the real valued random variable
	\break
	$\left\|\varphi_{(\mu,\nu)}^{\prime}\left(\sqrt{1-\tau}\,\mathbb{G}^{\mu},\sqrt{\tau}\,\mathbb{G}^{\nu}\right)\right\|_{\infty}$. As $\varphi_{(\mu,\nu)}^{\prime}\left(\sqrt{1-\tau}\,\mathbb{G}^{\mu},\sqrt{\tau}\,\mathbb{G}^{\nu}\right)$ can be approximated by its bootstrap version, a direct application of the continuous mapping theorem gives the bootstrap estimator $q_{1-\alpha}^{\ast}$ of $q_{1-\alpha}$ built as the $1-\alpha$-quantile of the random variable $\left\|\sqrt{\frac{m\,n}{m+n}}\,\left(\varphi\left(\mu_{m}^{\ast},\nu_{n}^{\ast}\right)-\varphi\left(\mu_{m},\nu_{n}\right)\right)\right\|_{\infty}$. Thereby, $1-\alpha$ asymptotic confidence bands are computed as the bracket $\left[\varphi\left(\mu_{m},\nu_{n}\right)-\sqrt{\frac{m+n}{m\,n}}\,q_{1-\alpha}^{\ast},\varphi\left(\mu_{m},\nu_{n}\right)+\sqrt{\frac{m+n}{m\,n}}\,q_{1-\alpha}^{\ast}\right]$.
\section{Data Analysis}\label{Sec:EmpRes}

    	\subsection{Colocalization for multimodal distributions} \label{Subsec:multimodal}    
    	
    	Consider a cloud of points that has to be cost efficiently transported to another point cloud which has several different clusters. The colocalization curve gives us useful information about the proportion of points of the original sample that are transported to each cluster.
    	
 As the target points form distinct clusters, the distances or costs between the source points and these clusters vary, while transport to points within a cluster will be approximately of equal cost. Which is well reflected by the associated colocalization curve.

    	 To highlight this feature, we present two simulation examples, each with a distinct distribution of cloud points and cost function. In both cases, we assume that the source and target samples have the same size.

		\textbf{Scenario I.} We consider samples in $\mathbb{R}^2$, where  $X_1, \ldots, X_n$ is a sample from a 2-dimensional standard Gaussian $\mathcal{N}_2(0, \mathbb{I}_2)$, and $Y_1, \ldots, Y_n$ is a sample from a Gaussian mixture $M \mathcal{N} = \sum_{i=1}^3 p_i \mathcal{N}(a_i, \Sigma) $, with the following parameters
		
		\begin{align*}
		\begin{pmatrix} p_1 \\ p_2 \\ p_3\end{pmatrix} = \begin{pmatrix} 0.3 \\ 0.3 \\ 0.4 \end{pmatrix},  \quad a_1 =  \begin{pmatrix} -1 \\ 3 \end{pmatrix},  \quad a_2 = \begin{pmatrix} 10 \\ -1 \end{pmatrix}, \quad a_3 =\begin{pmatrix} 15 \\ 5 \end{pmatrix}, \Sigma = \begin{bmatrix}1 & -0.8 \\-0.8 & 1
    	\end{bmatrix}.
		\end{align*}		 
		
		The cost is chosen as the Euclidean distance. The means of each mixture component were selected at different distances from the mean of the source sample to obtain three distinct components. The selection of the weights and the covariance matrix is aimed at obtaining non-circular shape of the clusters. In Figure \ref{fig:gauss} (left), we display samples of size $n=100$ together with the corresponding empirical EOT plan, for the corresponding EOT curve, see Figure \ref{fig:gauss} (right).
		
		 Following the procedures described in Section \ref{Sec:StatApplications}, we consider three sample sizes $n=100, 500, 1000$ and bootstrap replications $B=1000$. From these, we compute the corresponding empirical EOT curves to generate the bootstrap sample of $\left\|\left(\varphi\left(\mu_{n}^{\ast},\nu_{n}^{\ast}\right)-\varphi\left(\mu_{n},\nu_{n}\right)\right)\right\|_{\infty}$. For each $n$, we generate $B$ independent samples from $\mathcal{N}_2(0, \mathbb{I}_2)$ and  $M \mathcal{N}$, and compute the associated EOT curve to obtain a sample of $\left\|\left(\varphi\left(\mu_{n},\nu_{n}\right)-\varphi\left(\mu,\nu\right)\right)\right\|_{\infty}$. To test the validity of our approach, the underlying population colocalization curve is obtained using a single sample of size $20 \,000$, which serves as ground truth. Corresponding Q-Q plots are displayed in Figure \ref{fig:gauss_qq}. The bootstrap quantile values are presented in Table \ref{table:quantil_gauss}.  Using these values, we build $1-\alpha$ uniform confidence bands (see Figure \ref{fig:gauss_confbands}).

	\begin{table}[h!]
		\begin{center}
			\begin{tabular}{ |c|c|c|c| } 
 			\hline
 			$n$ & 100 & 500 & 1000 \\
 			\hline
			$\widehat{q}_{95}^{\ast}$ & 0.9474 & 0.8486 & 0.8979\\
			\hline
			$\widehat{q}_{95}^{\ast}\sqrt{\frac{2}{n}}$ & 0.1340 & 0.0537 & 0.0402  \\
		 	\hline
			\end{tabular}
		\end{center}
	\caption{Bootstrap quantiles Gaussian vs Gaussian Mixture (scenario I).} 
	\label{table:quantil_gauss}	
	\end{table}

    	\begin{figure}[!htbp]
 	 	\centering
  			\includegraphics[width=\textwidth]{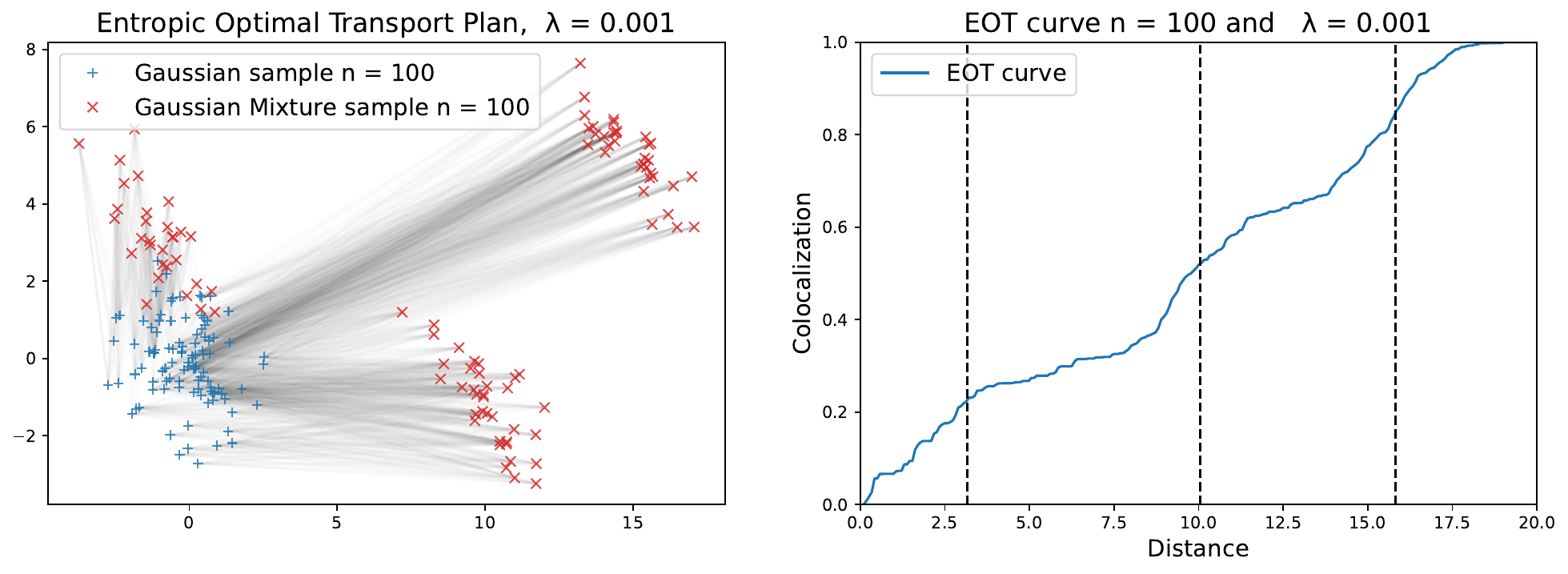}
  		\caption{\text{Left:} \textit{Scenario I: Gaussian vs Gaussian Mixture.}  In blue $+$ the source sample from the Gaussian distribution and in red $\times$ the target sample from the Gaussian mixture. Observe that the sample mixture form three clusters, whose means lie at different distances from the mean of the source distribution. The gray lines represent the EOT plan (with regularization parameter $\lambda = 0.001$) between the samples. \text{Right:} The blue line correspond to the empirical EOT curve between the Gaussian and the Gaussian mixture distributions (with regularization parameter $\lambda = 0.001$). The vertical lines display the distance between the mean of the Gaussian distribution and the means of the components of the Gaussian mixture.}
  		\label{fig:gauss}
  		\end{figure}
  		
    	\begin{figure}[!htbp]
 	 	\centering
  			\includegraphics[width=\textwidth]{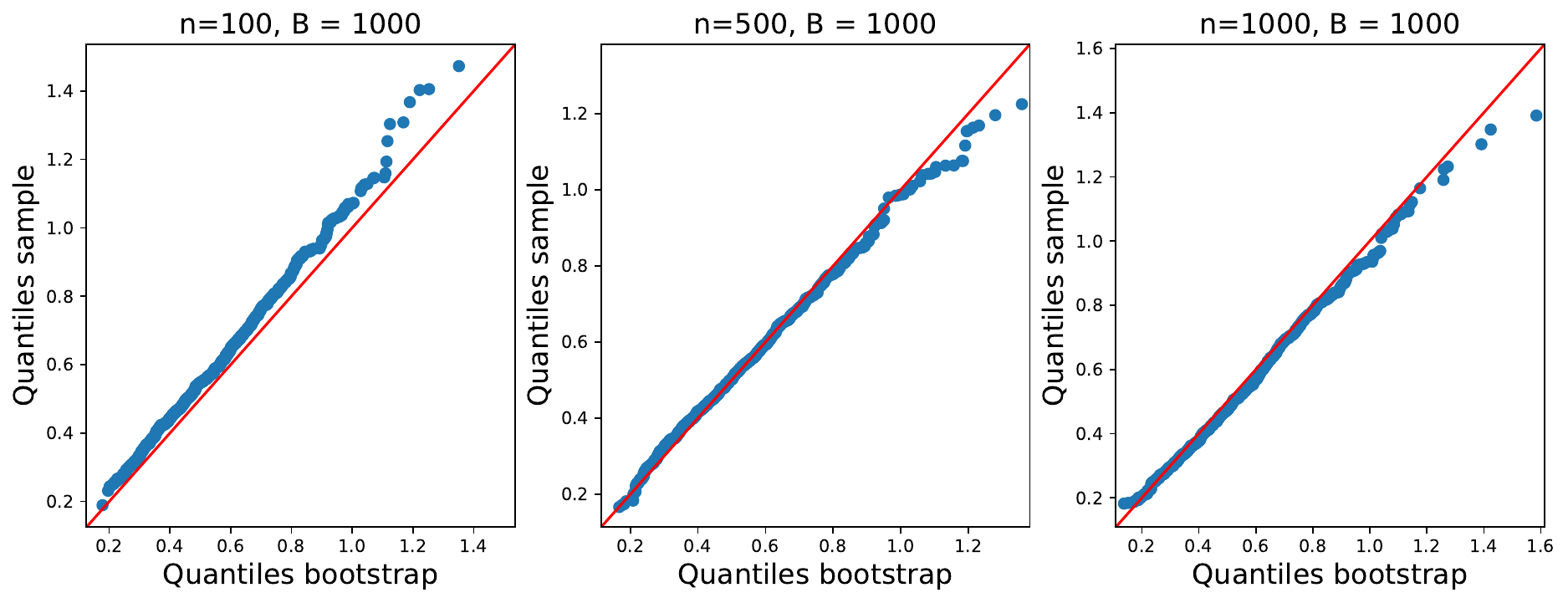}
  		\caption{\textit{Bootstrap accuracy of the EOT limit law for scenario I.} Q-Q plots of the bootstrap distribution of the uniform norm of the empirical EOT curve for three sample sizes $n=100, 500, 1000$ and bootstrap replications $B=1000$. The underlying true EOT curve is obtained from a sample of size $20\,000$, which serves as an approximation of the ground truth.}
  		\label{fig:gauss_qq}
  		\end{figure}

    	\begin{figure}[h!]
			\centering			
			\includegraphics[width=\textwidth]{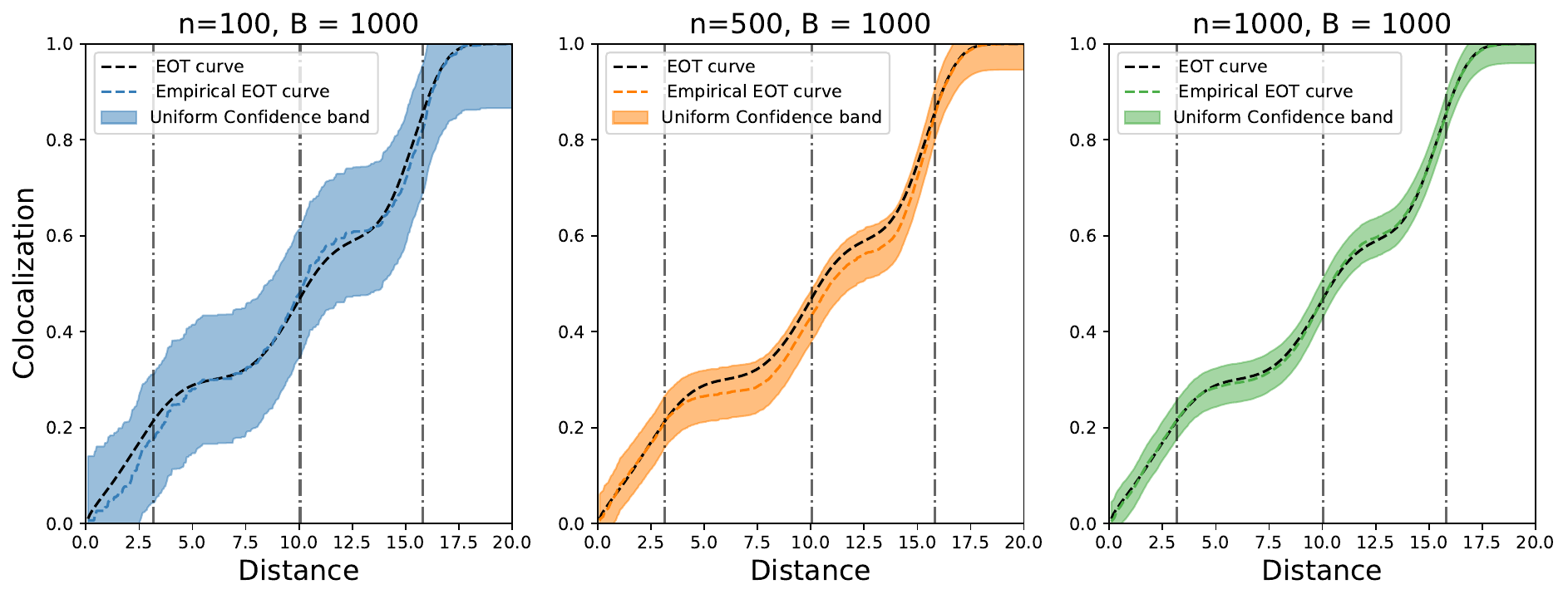}
			\caption{\textit{EOT-Confidence bands for scenario I:}  The dashed line corresponds to the colocalization curve (which serves as ground truth) between the Gaussian and the Gaussian mixture distributions (with sample size $20\,000$ and regularization parameter $\lambda = 0.001$). The dotted lines correspond to the EOT curves obtained from $n=100, 500$ and $1000$ observations, shown with uniform bootstrap confidence bands at level $1-\alpha$. Here $\alpha = 0.05$, based on the $n$ out of $n$ bootstrap with  $B=1000$ replications for $n=100, 500, 1000$. The bands are of the form $\varphi(\mu_n, \nu_n) \pm \widehat{q}_{95}^{\ast} \sqrt{\frac{2}{n}}$ with $\widehat{q}_{95}^{\ast}$ as in the Table \ref{table:quantil_gauss}.}
			\label{fig:gauss_confbands}
		\end{figure}

		 \textbf{Scenario II.} We investigate colocalization curves obtained from data on the sphere $\mathbb{S}_2$, with the geodesic distance as cost function, 
		 \begin{equation*}
		 \label{eq:geodesicdistance}
			c(x,y) = \rho_{\mathbb{S}_2}(x,y) = \mathrm{arccos}( x \cdot y) = \mathrm{arccos}(x_1y_1 + x_2y_2 +x_3y_3), \qquad \forall x, y \in \mathbb{S}_2. 
		 \end{equation*}

		In this case, we assume that the source and target distribution come from a von Mises-Fisher distribution $M_{3}(a, \kappa)$, with probability density function (w.r.t. uniform measure in $\mathbb{S}_2$) of a unit vector $x$ given by
		\begin{equation*}\label{eq:densityvonMF}
		f(x) = \frac{ \kappa^{\rfrac{3}{2}}}{\sqrt{2\pi} I_{\rfrac{1}{2}}(\kappa)}e^{\kappa a^tx}, 
		\end{equation*}
where $a \in \mathbb{S}_2$ is the mean direction, $\kappa >0$  the concentration parameter, and $I_{\rfrac{1}{2}}$ is the modified Bessel function of order $\rfrac{1}{2}$, see \cite{mardia2009directional}.  Let $X_1, X_2, \ldots, X_n$ be an i.i.d sample from a von Mises-Fisher distribution $M_{3}((0,0,1), 50)$ and let $Y_1, \ldots, Y_n$ be an i.i.d sample from a von Mises-Fisher mixture $\sum_{i=1}^3 p_i M_{3}(a_i, \kappa_i) $, with $(p_1, p_2, p_3) = (0.3, 0.3 , 0.4)$, $(\kappa_1, \kappa_2, \kappa_3) = (60,70,80)$ and

		$$a_1 = \frac{1}{3}\begin{pmatrix} \sqrt{3} \\ \sqrt{2} \\ 2 \end{pmatrix},  \quad a_2 = \begin{pmatrix}
		0 \\-1 \\0
		\end{pmatrix}, \quad a_3 = \frac{-\sqrt{2}}{2}\begin{pmatrix}
		1  \\ 0 \\ 1
		\end{pmatrix}.$$
As in the preceding example, the means of the mixture components are selected so that the distances between each component mean and the sample source mean are distinct, obtaining three different components  (see Figure \ref{fig:von}), to construct a $95 \%$ uniform band for the true colocalization. We again consider three sample sizes $n=100, 500, 1000$, and bootstrap replications $B=1000$. The corresponding empirical EOT curves are computed to obtain the bootstrap sample distribution of $\left\|\left(\varphi\left(\mu_{n}^{\ast},\nu_{n}^{\ast}\right)-\varphi\left(\mu_{n},\nu_{n}\right)\right)\right\|_{\infty}$. Following the same approach as before, a sample of $\left\|\left(\varphi\left(\mu_{n}^{\ast},\nu_{n}^{\ast}\right)-\varphi\left(\mu_{n},\nu_{n}\right)\right)\right\|_{\infty}$, where the limit colocalization curve $\varphi(\mu,\nu)$ is approximated by a sample of size $20 \, 000$, is obtained. Then we compare both samples as we show in Figure \ref{fig:vonMF_qq}. The bootstrap quantile values are  presented in Table \ref{table:quantil_von}. For the confidence bands, see Figure \ref{fig:vonMF_confbands}.

\begin{table}[h!]
	  			\begin{center}
			\begin{tabular}{ |c|c|c|c| } 
 			\hline
 			$n$ & 100 & 500 & 1000 \\
 			\hline
			$\widehat{q}_{95}^{\ast}$ & 0.9299 & 0.8771 & 0.8638 \\
			\hline 
			$\widehat{q}_{95}^{\ast} \sqrt{\frac{2}{n}}$ & 0.1315 & 0.0555 & 0.0386 \\
		 	\hline
			\end{tabular}
		\end{center}
	\caption{Bootstrap quantiles for von Mises vs von Mises Mixture (scenario II).} 
	\label{table:quantil_von}	
	\end{table}

		    	\begin{figure}[!htbp]
 	 	\centering
 	 	\includegraphics[width=0.8\textwidth]{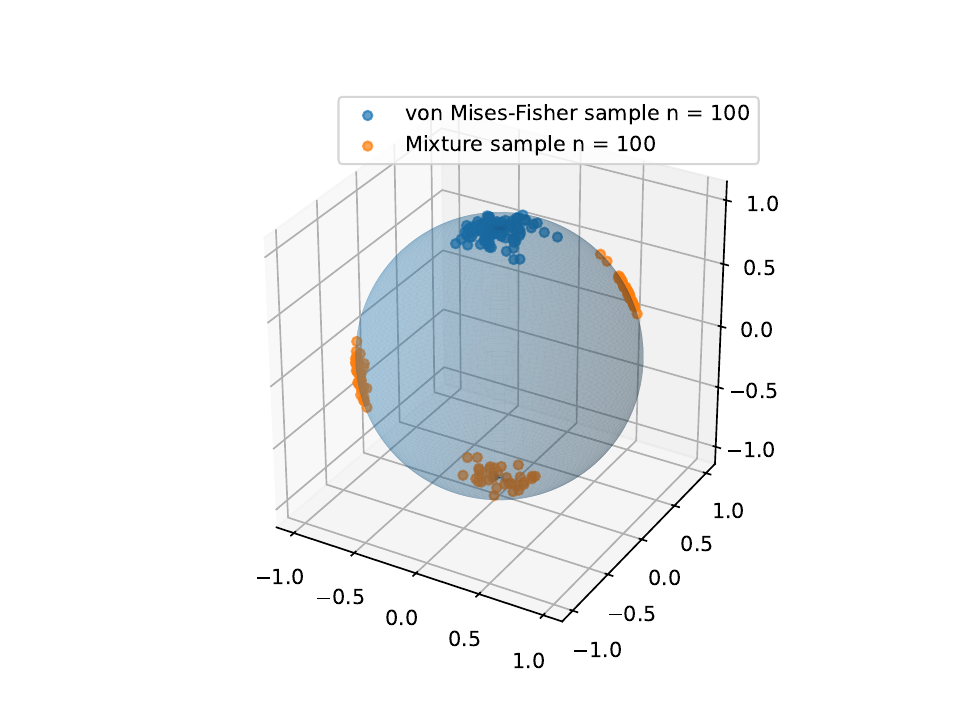}
  		\caption{\textit{Point-cloud, von Mises-Fisher vs Mixture:}  In blue the source sample from the von Misses-Fisher distribution and in orange the target sample from the von Mises mixture. Observe the three components of the mixture, each centered at a different distance from the mean of the source sample distribution.}
 		\label{fig:von}
		\end{figure}		
		
		\begin{figure}[!htbp]
 	 	\centering
  			\includegraphics[width=\textwidth]{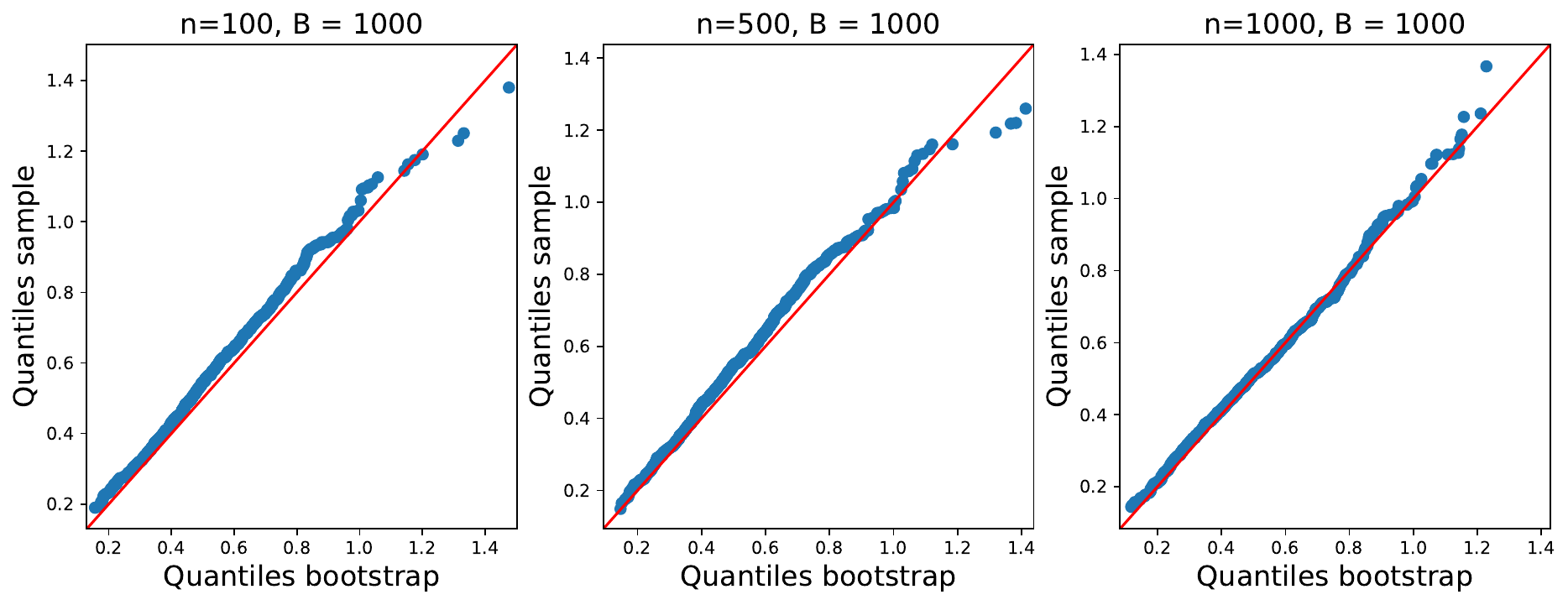}
  		\caption{\textit{Bootstrap accuracy of the EOT limit law for scenario II:} Q-Q plots of the bootstrap distribution of the uniform norm of the EOT for three sample sizes $n=100, 500, 1000$ and bootstrap replications $B=1000$. The underlying true EOT curve is approximated by a sample of size $20\,000$.}
  		\label{fig:vonMF_qq}
  		\end{figure}

\begin{figure}[h!]
			\centering			
			\includegraphics[width=\textwidth]{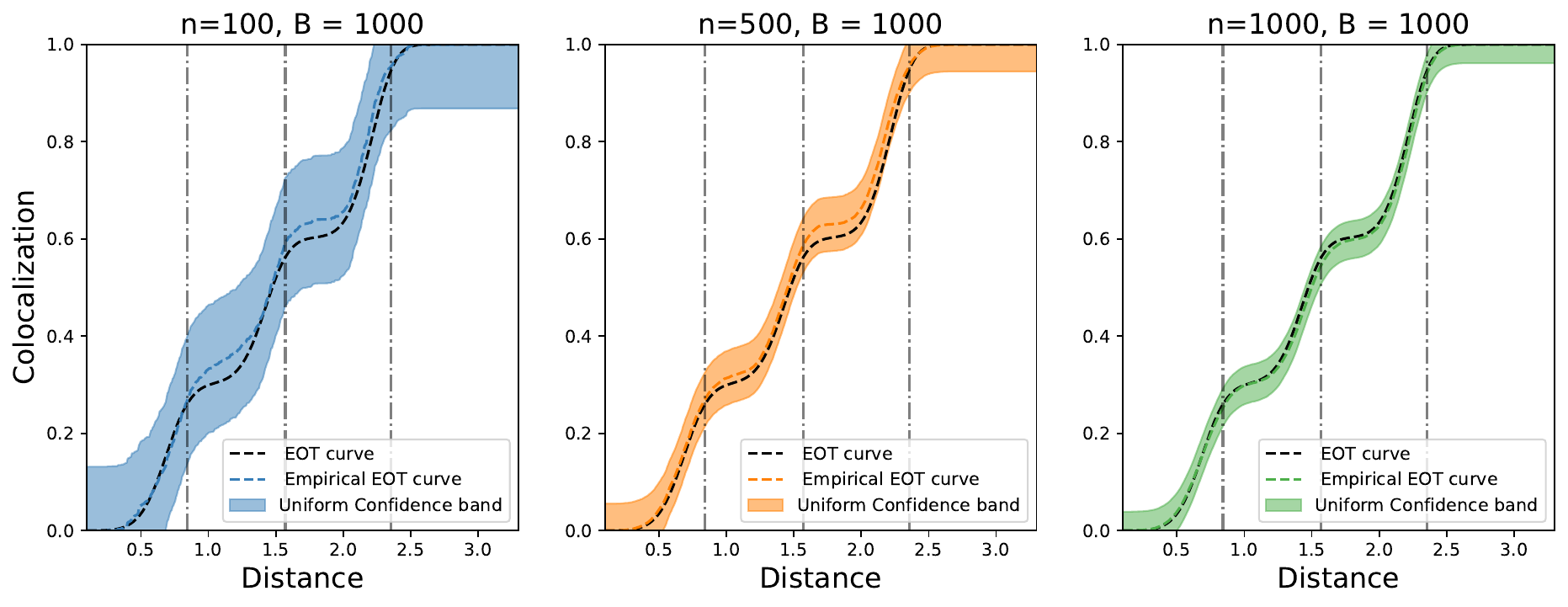}
			\caption{\textit{EOT-confidence bands for scenario II:} the dashed line correspond to the EOT curve (approximation of the ground truth) between the von Mises-Fisher and the von Mises-Fisher mixture distributions (with sample size $20\,000$ and regularization parameter $\lambda = 0.001$), the dotted line correspond to the empirical EOT curve with uniform bootstrap confidence bands at level $1-\alpha$, with $\alpha = 0.05$, based on the $n$ out of $n$ bootstrap with  $B=1000$ replications for $n=100, 500, 1000$. The bands are of the form $\varphi(\mu_n, \nu_n) \pm \widehat{q}_{95}^{\ast}\sqrt{\frac{2}{n}}$ with $\widehat{q}_{95}^{\ast}$ as in the Table \ref{table:quantil_von}. The vertical dashed lines correspond to the distances between the mean of the von Mises-Fisher distribution and the means of the components of the mixture.}
			\label{fig:vonMF_confbands}
		\end{figure}		
In summary, we find that the bootstrap approximation works reasonably well. Further, in both scenarios the empirical regularized colocalization curves clearly capture the three major scales on which mass is being transported to match the three component mixture with the target distribution.	
    	\subsection{Colocalization for super-resolution microscopy}\label{SubSec:RealDataAnalysis}
    	With the advance of super-resolution fluorescence microscopy (nanoscopy), it is possible to acquire detailed information about individual proteins down to the scale of nanometres. Essentially,  all nanoscopies rely on fluorescent markers which are attached to the proteins of interest, called labeling. After laser excitation, these undergo molecular transitions between two or more energy states, usually a fluorescent ``on" and a  non fluorescent ``off" state. The resulting emitted photons in the on-state are then recorded, e.g. by a photon detector, and provide highly resolved spatial images of protein assemblies (for the statistical-physical modeling see e.g. \cite{Munk2020} and the reference therein).  In particular, one of the techniques to achieve super-resolution is STimulated Emission Depletion (STED) super-resolution microscopy, see e.g. \cite{hell2007far}. Meanwhile, this is a well established methodology which has led to significant advances in biological, pharmaceutical and medical research, among others, reflected in the Chemistry Nobel Price, 2014.

In STED nanoscopy the obtained raw image data represent spatial fluorescence distributions (recall Figure \ref{fig:TomMicsection} and see also Figure \ref{fig:TomMic}) that are stored as matrices containing the recorded protein intensities and its coordinates. Based on such images, the understanding of the spatial proximity (colocalization) of protein assemblies (e.g. \cite{adler2010quantifying,zinchuk2014quantitative}) is fundamental for understanding of protein formation, interaction and communication processes in the cell. A variety of methods to analyse colocalization is available (\cite{wang2019spatially, mukherjee2020generalizing, lagache2018mapping}). More recently, the use of colocalization curves in \eqref{Eqn:ClassicalColocFn} has been introduced by \cite{Klatt2020} as a tool which in addition to existing methods provides a concrete matching of proteins. Meanwhile, such  colocalization curves have been employed for the analysis of mitochondria (\cite{Tameling2021,  naas2024multimatch}; \cite{ Hirtl_2025}), blood cells (\cite{Vaisey_2022}), and for studies of neuronal cells in both injured and non-injured retinas (\cite{Strong_2023}). 
		
	To cast this into our framework, in the following we consider the centers $h_i$ of the $L$ pixels $i$ as the ground space $\mathcal{A} =\{h_1, \ldots, h_L \}$, for images of $L = L_x \cdot L_y $ pixels, where $L_x, L_y$, are the number of pixels in the $x-$ and $y-$ direction respectively. Then the recorded fluorophore intensity (after standardization) can be viewed as a discrete probability distribution over the simplex 
    	$$ \Delta_L = \left\{ r = (r_1, \ldots, r_L) \in \mathbb{R}^L : r_i \geq 0, \sum_{i=1}^L r_i = 1 \right\}$$
		on the equidistant grid $[0, L_x] \times [0, L_y ]$. The cost to transport the intensity from one pixel to another one is given by the euclidean distance. Then, for samples of the intensity distributions, $\widehat{\mu}_{L,n} = \sum_{i=1}^L U_{i,L}^\mu \delta_{h_i}$, $\widehat{\nu}_{L,n} = \sum_{i=1}^L U_{i,L}^\nu \delta_{h_i}$, where $U_{i,L}^\mu$ and $U_{i,L}^\nu$ correspond to the intensity recorded in the pixel $i$ for each image, the EOT curve based on the estimated EOT plan $\widehat{\pi}_{\widehat{\mu}_{L,n},\widehat{\nu}_{L,n}}^{\lambda}$ is given by
    	\begin{equation*}
    	\varphi(\widehat{\mu}_{L,n},\widehat{\nu}_{L,n})(t)= \sum_{i,j = 1}^L \mathbf{1}_{\{c\leq t\}}(h_i,h_j) \widehat{\pi}_{\widehat{\mu}_{L,n},\widehat{\nu}_{L,n}}^{\lambda}(h_i,h_j), \qquad\text{for } t \in  [0,\|c\|_{\infty}],
    	\end{equation*}
    	where $\|c\|_{\infty}$ is the maximal distance between points on the ground space $\mathcal{A}$. We mention that this is a further discretization of the underlying empirical process, where the data correspond to the observed intensities in each of the pixels forming the fixed partition of $[0, L_x] \times [0, L_y]$. This means that the data do not constitute a sample over the region but rather represent an intensity function over the partition determined by the image resolution. Further details regarding the asymptotic behavior for this specific case have been moved to Subsection \ref{SuplMat:Consistency} in the Supplementary Material.
    	
In Figure \eqref{fig:TomMic}, a STED image of $L_c = 472 \times 1208 = 570,176$ pixels, and a section of $L_s = 128 \times 128 = 16,384$ pixels, recorded on immunolabeled human cells is displayed. Here, the cells are labelled for a protein in the mitochondrial outer membrane (Tom20) and in the mitochondrial inner membrane (Mic60), respectively. The data is taken from the  Optimal Transport Colocalization  (OTC) dataset \cite{carla_tameling_2021}. Specifically, the images ``07\_00\_Mic60\_STED.tif'' and ``07\_00\_Tom20\_STED.tif'', and the corresponding sections ``07\_03\_Mic60\_STED.tif'' and ``07\_03\_Tom20\_STED.tif'', were used. In this case, the cost matrices are of order $L_c^2 = 3.25 \times 10^{11}$ and $L_s^2=2.68 \times  10^8$. For these complete images, the cost matrix is too large to allow exact computation of the EOT value and plan for small regularization parameters. To overcome this obstacle, we apply our methodology to resampled colocalization curves, thus significantly reducing computational time and complexity. The corresponding bootstrap confidence bands will provide the coverage of the exact colocalization curve with high probability. 

In the following, we analyze runtime, the influence of the regularization parameter and the coverage property of the proposed uniform confidence bands. For all computations we use the \verb!Julia! package \emph{MuSink} (\cite{Musink2025Staudt}) with \verb!Julia! version 1.11.6.
   	   
		\subsubsection*{Influence of the regularization parameter $\lambda$.} For larger values of the regularization parameter $\lambda$ in \eqref{Eqn:EOT}, the entropic term dominates and the transport plan becomes blurred, approaching to the independent coupling. Conversely, as $\lambda$ tends to zero, the plan converges to the (non-penalized) optimal plan (\cite{Nutz2021}) at the expense of increasing numerical instability of the Sinkhorn's algorithm (see the references given in the Introduction). To investigate this influence further, we consider the colocalization curve,  $\varphi(\widehat{\mu}_{L_s,n}, \widehat{\nu}_{L_s,n})$, between the STED recordings of Tom20 and Mic60 (see Figure \eqref{fig:TomMic}), for different values of regularization parameter $\lambda = 0.1, 0.01, 0.001$ and the corresponding colocalization curve from the (non-penalized) optimal plan which has been exactly computed (with large numerical effort using the commercial version of the CPLEX solver) as in \cite{Tameling2021}. For the bootstrap confidence band, we sample $n=2000$ times according to the intensity distribution to obtain $\widehat{\mu}_{L_s,n}^*$ and $\widehat{\nu}_{L_s,n}^*$ for each colocalization curve and each regularization parameter. These samples are then used to compute the colocalization curve  $\varphi(\widehat{\mu}_{L_s,n}^*,\widehat{\nu}_{L_s,n}^*)$. A $95\%$ bootstrap confidence band with $B=2000$ repetitions is constructed as it is described in Section \ref{Sec:StatApplications}. The quantile values are reported in Table \eqref{table:quantil_STEDcomplete}, and the corresponding confidence bands are shown in Figure \eqref{fig:TomMic_diff_lambda}, together with the exact computation for the non-penalized OT $(\lambda = 0)$, using the  dataset in \cite{carla_tameling_2021}.  We observe that as the regularization parameter decreases, the EOT curve approaches the non-penalized colocalization curve. Notably, the $95\%$ band even for $\lambda=0.001$ covers the unregularized colocalization curve.

		\begin{table}[h!]
	  			\begin{center}
			\begin{tabular}{ |c|c|c|c| } 
 			\hline
 			$\lambda$ & 0.001 & 0.01  & 0.1 \\
 			\hline
 			$\widehat{q}_{95}^{\ast}$ & 2.3836 & 1.3914 & 0.3375 \\ 
 			\hline
			$\widehat{q}_{95}^{\ast}\sqrt{\frac{2}{n}}$ & 0.0754 & 0.0440 & 0.0107 \\
		 	\hline
			\end{tabular}
		\end{center}
	\caption{Bootstrap quantiles for Tom20/Mic60 section for different regularization parameters $\lambda$.} 
	\label{table:quantil_STEDcomplete}	
	\end{table}
		
	\begin{figure}[h!]
			\centering			
			\includegraphics[width=0.8\textwidth]{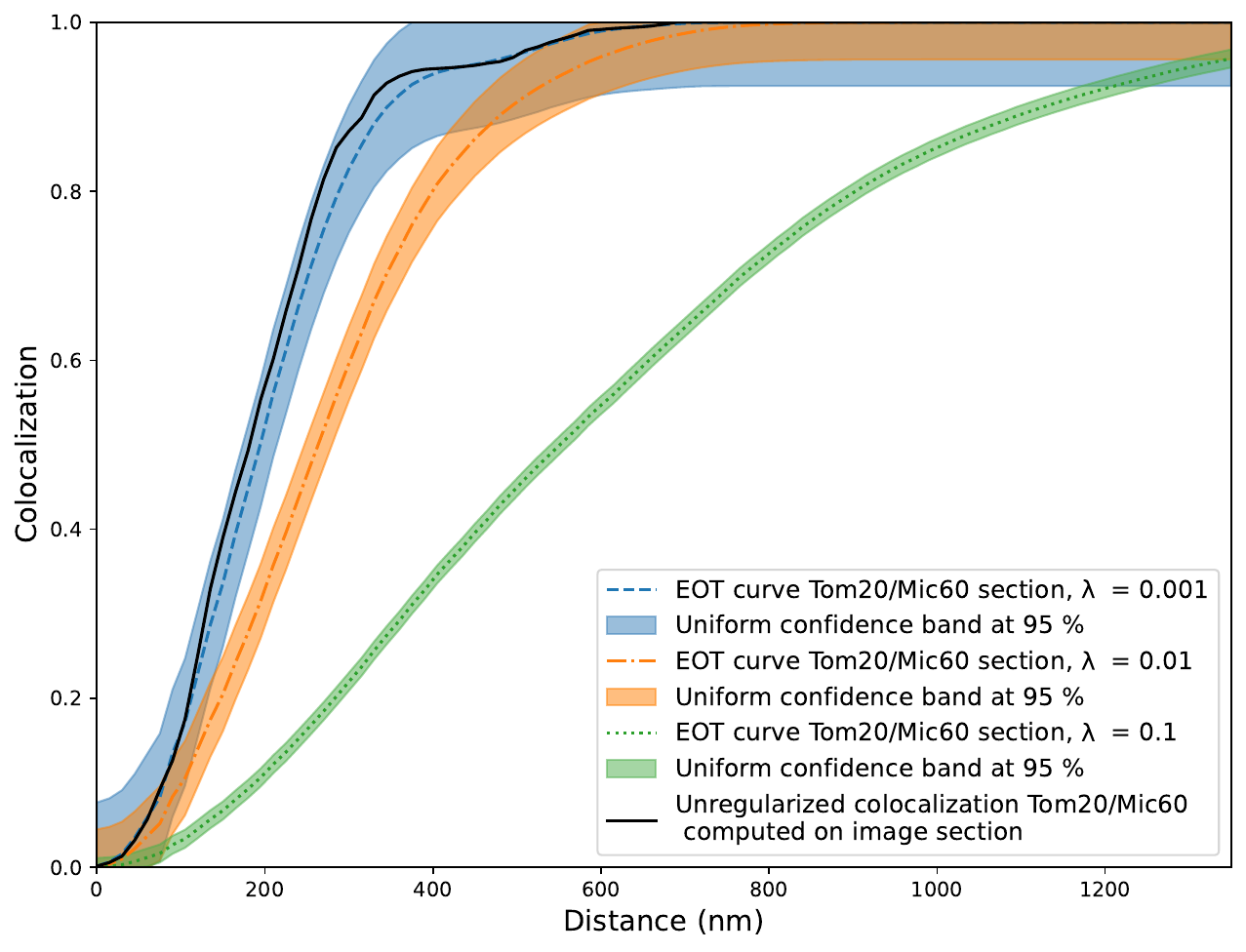}
			\caption{\textit{Influence of the regularization parameter $\lambda$.} The dashed lines correspond to the EOT curves between Tom20, Mic60 computed from the image section of size $128 \times 128$ (see Figure \ref{fig:TomMic}) with different regularization parameters $\lambda = 0.001, 0.01, 0.1$, with uniform bootstrap confidence bands at level $1-\alpha$, for $\alpha= 0.05$, based on the $n$ out of $n$ bootstrap with $B=2000$. The solid line correspond to the colocalization curve from the non-penalized optimal transport plan.}
			\label{fig:TomMic_diff_lambda}
			\end{figure}

		\subsubsection*{Coverage property of subsampling.} We aim to validate whether the colocalization curve is covered by the bootstrap confidence band based on smaller subsamples and for different regularization parameters. Specifically, for a fixed significance level $\alpha$, we determine how closely the empirical coverage probability approximates the nominal coverage $1-\alpha$. To this end, we consider the complete STED images Tom20 and Mic60, Figure \eqref{fig:TomMic}. In this case, the cost matrix is of order $3.25 \times 10^{11}$, making exact computation of the unregularized plan unfeasible. In our numerical experiments, we were able to decrease the  regularization parameter to $\lambda = 0.0008$ and compute the corresponding EOT plan and the colocalization curve
$\varphi(\widehat{\mu}_{L_c,n},\widehat{\nu}_{L_c,n})$. It is worth noticing that computation of the EOT plan requires only a few minutes on a standard laptop. However, evaluating a single colocalization value takes approximately $0.5$ minutes, as it involves matrix multiplication of the cost matrix and transport plan. Since the colocalization curve is obtained by evaluating over a grid of values, the computational cost scales linearly with the grid size. Therefore, computing the full curve may require several hours for sufficiently fine grids, and we expected severe reduction of run time when we randomly subsample from the data. 

To investigate its accuracy, we examine the coverage property of our uniform bands based in small subsamples; we sample from the intensity distribution of each image with sizes $n^* = 500, 1000,  2000, 5000$. For these sample sizes, the colocalization curve can be computed for even smaller values of the regularization parameter. For illustration, we consider $\lambda = 0.0008, 0.0001$ (see Figure \ref{fig:TomMicCol}).  In these settings, computation of both the EOT plan and the corresponding colocalization curve requires only a few seconds, which is substantially faster than computation on the complete image. More detailed, we proceeded as follows: for each sample size and each regularization parameter, we construct a bootstrap confidence band, to obtain $\widehat{\mu}_{L_c,n}^*$ and $\widehat{\nu}_{L_c,n}^*$ for each colocalization curve. These samples are then used to compute the colocalization curve $\varphi(\widehat{\mu}_{L_c,n}^*, \widehat{\nu}_{L_c,n}^*)$. A $95 \%$ bootstrap confidence band with $B = 100$ repetitions is constructed as detailed in Section \ref{Sec:StatApplications}. To investigate how well the empirical coverage probability approximates the nominal coverage probability, we repeated the last procedure 100 times. Table \ref{table:coverage} reports the number of confidence bands that fully cover the curve $\varphi(\widehat{\mu}_{L_c,n},\widehat{\nu}_{L_c,n})$ computed from the complete image. The results indicate that the empirical coverage probability is close to the nominal coverage of $0.95$, even for colocalization curves based on small subsamples.

\begin{table}[ht]
\centering
\begin{tabular}{|c|c|c|}
\hline
 & \multicolumn{2}{|c|}{Empirical coverage probability} \\
\cline{2-3}
Subsample $n^*$& $\lambda = 0.0008$ & $\lambda = 0.0001$ \\
\hline
500  & 0.94 & 0.97 \\
1000 & 0.94 & 0.98 \\
2000 & 0.96 & 0.93 \\
5000 & 0.96 & 0.94 \\
\hline
\end{tabular}
\caption{Empirical coverage probability of bootstrap confidence, with different resampling sizes and regularization parameters, with bootstrap replications $B = 100$ and nominal coverage of $1-\alpha = 0.95$.}
\label{table:coverage}
\end{table}

To summarize, we find that the colocalization curve and its confidence band remain stable, even for small values of $\lambda$, which resemble the physical scale of actual transport well in our situation. Finally, we give a heuristic explanation of the computing benefits observed in our empirical study. Given that the computational complexity of the Sinkhorn algorithm is of order $\mathcal{O}(n^2/\lambda^2)$ (\cite{Dvurechensky2018}), computing bootstrap confidence bands introduces an additional factor corresponding to the number of bootstrap iterations. However, the gain of speed induced by subsampling compensates for this and leads to substantially faster computations. In our case, using the complete image with $L_c = 472 \times 1208 $ pixels and regularization $\lambda = 0.0008$ yields a computational cost of order $ L_c^2/\lambda^2  \approx 5.07 \times 10^{17}$. In contrast, the bootstrap procedure with subsampling $(n^* = 5000, B =100)$ results in a computational order of $B(n^*)^2/\lambda^2  \approx 3.9  \times 10^{15}$, corresponding to a reduction of roughly 2 orders of magnitude.

    		\begin{figure}[htb]
			\centering			
			\includegraphics[width=1\textwidth]{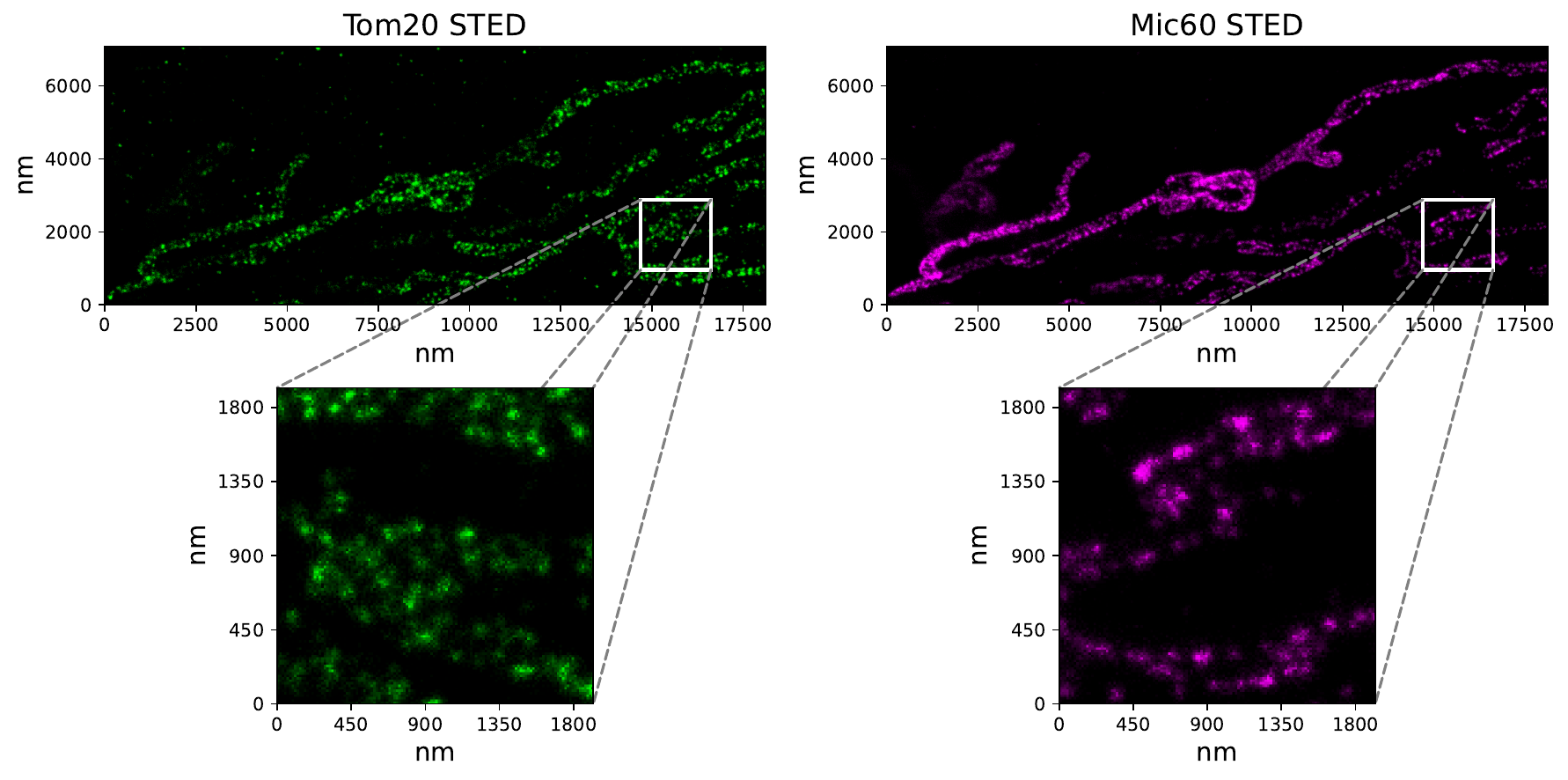}
			\caption{\textit{STED nanoscopy images of Tom20 and Mic60.} Top: Complete image of size $472 \times 1208$ pixels from the Optimal Transport Colocalization (OTC) data set \textcite{carla_tameling_2021}. For visualization, pixel intensities were scaled by factors of $5$ (Tom20) and $2.5$ (Mic60) to enhance brightness and contrast. Bottom: Zoom in ($128 \times 128$ pixels) at the grey rectangles which correspond to the sections in Figures \ref{fig:TomMicsection} and \ref{fig:TomMic_diff_lambda}.}
			\label{fig:TomMic}
			\end{figure}

			\begin{figure}[!h]
			\centering			
			\includegraphics[width=\textwidth]{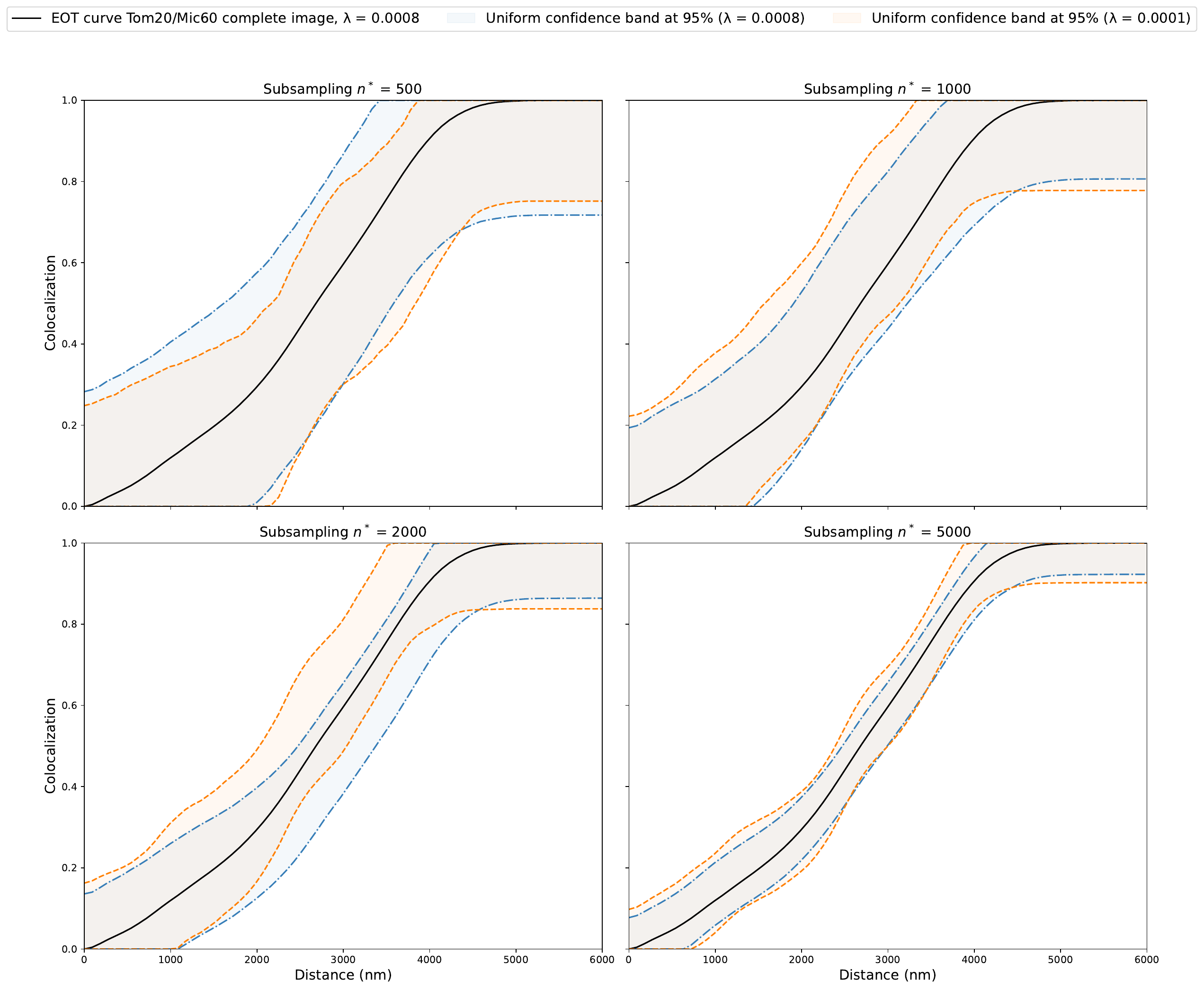}
			\caption{\textit{Colocalization curve.} EOT curve $\varphi(\widehat{\mu}_{L,n}, \widehat{\nu}_{L,n})$ between the full Tom20 and Mic60 images (see Figure \ref{fig:TomMic}) with regularization parameter $\lambda = 0.0008$. Uniform bootstrap confidence bands at level $1-\alpha$ with $\alpha=0.05$ are constructed using the $n^*$ out $n^*$ bootstrap with $B=100$ replications for $n^* = 500, 1000, 2000, 5000$ with different regularization parameters $\lambda = 0.0008, 0.0001$. In both cases, each confidence band fully contains the EOT curve $\varphi(\widehat{\mu}_{L,n}, \widehat{\nu}_{L,n})$.}
			\label{fig:TomMicCol}
			\end{figure}

\paragraph{Acknowledgements} All authors gratefully acknowledge support from the Deutsche Forschungsgemeinschaft through DFG RU 5381. We also thank Stefan Jakobs from the Max Planck Institute for Multidisciplinary Science for providing the STED data from Section \eqref{SubSec:RealDataAnalysis}. Additionally, we thank Christian B\"ohm for technical support and Thomas Staudt for computational advice.
   
\printbibliography

@article{Adler2017,
	title={Learning to solve inverse problems using {W}asserstein loss},
	author={Adler, Jonas and Ringh, Axel and {\"O}ktem, Ozan and Karlsson, Johan},
	journal={arXiv preprint arXiv:1710.10898},
	year={2017}
}

@article{Altschuler2017,
	title={Near-linear time approximation algorithms for optimal transport via Sinkhorn iteration},
	author={Altschuler, Jason and Niles-Weed, Jonathan and Rigollet, Philippe},
	journal={Advances in neural information processing systems},
	volume={30},
	pages={1961--1971},
	year={2017},
	doi={10.5555/3294771.3294958}
}

@article{Altschuler2022,
	author={Altschuler, Jason M. and Niles-Weed, Jonathan and Stromme, Austin J.},
	title={Asymptotics for {S}emidiscrete {E}ntropic {O}ptimal {T}ransport},
	journal={SIAM Journal on Mathematical Analysis},
	volume={54},
	number={2},
	pages={1718-1741},
	year={2022},
	doi={10.1137/21M1440165},
	URL={https://doi.org/10.1137/21M1440165},
}

@inproceedings{Arjovsky2017,
	title={Wasserstein generative adversarial networks},
	author={Arjovsky, Martin and Chintala, Soumith and Bottou, L{\'e}on},
	booktitle={International Conference on Machine Learning},
	pages={214--223},
	year={2017},
	volume={70},
	organization={PMLR}
}

@misc{Balakrishnan2025,
	title={{Statistical Inference for Optimal Transport Maps: Recent Advances and Perspectives}}, 
	author={Sivaraman Balakrishnan and Tudor Manole and Larry Wasserman},
	year={2025},
	eprint={2506.19025},
	archivePrefix={arXiv},
	primaryClass={math.ST},
	url={https://arxiv.org/abs/2506.19025}, 
}

@inproceedings{Balikas2018,
	title={Cross-lingual document retrieval using regularized {W}asserstein distance},
	author={Balikas, Georgios and Laclau, Charlotte and Redko, Ievgen and Amini, Massih-Reza},
	booktitle={Advances in Information Retrieval: 40th European Conference on IR Research, ECIR 2018, Grenoble, France, March 26-29, 2018, Proceedings 40},
	pages={398--410},
	year={2018},
	doi={10.1007/978-3-319-76941-7_30},
	organization={Springer}
}

@article{delBarrio2025,
	title={Distributional Limit Theory for Optimal Transport},
	author={del Barrio, Eustasio and Gonz\'{a}lez-Sanz, Alberto and Loubes, Jean-Michel and Rodr\'{i}guez-V\'{i}tores, David},
	journal={arXiv preprint arXiv:2505.19104},
	year={2025}
}

@book{Billingsley2013,
	title={Convergence of {P}robability {M}easures},
	author={Billingsley, Patrick},
	year={2013},
	publisher={John Wiley \& Sons}
}

@article{Carcamo2020,
	title={Directional differentiability for supremum-type functionals: Statistical applications},
	author={C{\'a}rcamo, Javier and Cuevas, Antonio and Rodr{\'i}guez, Luis-Alberto},
	journal={Bernoulli},
	volume={26},
	number={3},
	pages={2143--2175},
	year={2020},
	doi={10.3150/19-BEJ1188}
}

@article{Carlier&Laborde2020,
	title={A differential approach to the multi-marginal {S}chrödinger system},
	author={Carlier, Guillaume and Laborde, Maxime},
	journal={SIAM Journal on Mathematical Analysis},
	volume={52},
	number={1},
	pages={709--717},
	year={2020},
	doi={10.1137/19M1253800},
	publisher={SIAM}
}

@inproceedings{Chapel2021,
	author={Chapel, {L}aetitia and {F}lamary, {R}\'{e}mi and {W}u, {H}aoran and {F}\'{e}votte, {C}\'{e}dric and {G}asso, {G}illes},
	booktitle={Advances in {N}eural {I}nformation {P}rocessing {S}ystems},
	editor={M. {R}anzato and A. {B}eygelzimer and Y. {D}auphin and P.S. {L}iang and J. {W}ortman {V}aughan},
	pages={23270--23282},
	publisher={Curran {A}ssociates, {I}nc.},
	title={Unbalanced {O}ptimal {T}ransport through {N}on-negative {P}enalized {L}inear {R}egression},
	url={https://proceedings.neurips.cc/paper_files/paper/2021/file/c3c617a9b80b3ae1ebd868b0017cc349-Paper.pdf},
	volume={34},
	year={2021}
}

@book{Chewi2025,
	title={Statistical optimal transport},
	author={Chewi, Sinho and Niles-Weed, Jonathan and Rigollet, Philippe},
	year={2025},
	publisher={Springer}
}

@article{Cuturi2013,
	title={Sinkhorn distances: Lightspeed computation of optimal transport},
	author={Cuturi, Marco},
	journal={Advances in {N}eural {I}nformation {P}rocessing {S}ystems},
	volume={26},
	year={2013},
	doi={10.5555/2999792.2999868}
}

@article{Cuturi&Peyre2019,
	url = {http://dx.doi.org/10.1561/2200000073},
	year = {2019},
	volume = {11},
	journal = {Foundations and Trends® in Machine Learning},
	title = {Computational Optimal Transport: With Applications to Data Science},
	doi = {10.1561/2200000073},
	issn = {1935-8237},
	number = {5-6},
	pages = {355-607},
	author = {Marco Cuturi and Gabriel Peyr\'{e}}
}

@article{Dette&Kokot2022,
	title={Detecting relevant differences in the covariance operators of functional time series: a sup-norm approach},
	author={Dette, Holger and Kokot, Kevin},
	journal={Annals of the Institute of Statistical Mathematics},
	volume={74},
	number={2},
	pages={195--231},
	year={2022},
	doi={10.1007/s10463-021-00795-2},
	publisher={Springer}
}

@article{Dumbgen1993,
	title={On nondifferentiable functions and the bootstrap},
	author={D{\"u}mbgen, Lutz},
	journal={Probability Theory and Related Fields},
	volume={95},
	pages={125--140},
	year={1993},
	doi={10.1007/BF01197342},
	publisher={Springer}
}

@InProceedings{Dvurechensky2018,
	title={Computational {O}ptimal {T}ransport: {C}omplexity by {A}ccelerated {G}radient {D}escent {I}s {B}etter {T}han by {S}inkhorn's {A}lgorithm},
	author={Dvurechensky, Pavel and Gasnikov, Alexander and Kroshnin, Alexey},
	booktitle={Proceedings of the 35th {I}nternational {C}onference on {M}achine {L}earning},
	pages={1367--1376},
	year={2018},
	editor={Dy, {J}ennifer and {K}rause, {A}ndreas},
	volume={80},
	series={Proceedings of Machine Learning Research},
	month={10--15 Jul},
	publisher={PMLR},
	url={https://proceedings.mlr.press/v80/dvurechensky18a.html}
}

@article{Ferradans2014,
	title={Regularized discrete optimal transport},
	author={Ferradans, Sira and Papadakis, Nicolas and Peyr{\'e}, Gabriel and Aujol, Jean-Fran{\c{c}}ois},
	journal={SIAM Journal on Imaging Sciences},
	volume={7},
	number={3},
	pages={1853--1882},
	year={2014},
	doi={10.1137/130929886},
	publisher={SIAM}
}

@book{Folland2013,
	title={Real {A}nalysis: {M}odern {T}echniques and their {A}pplications},
	author={Folland, Gerald B},
	volume={40},
	year={1999},
	publisher={John Wiley \& Sons}
}

@book{Galichon2018,
	title={Optimal {T}ransport {M}ethods in {E}conomics},
	author={Galichon, Alfred},
	year={2018},
	doi={10.1515/9781400883592},
	publisher={Princeton University Press}
}

@article{Genevay2016,
	title={Stochastic optimization for large-scale optimal transport},
	author={Genevay, Aude and Cuturi, Marco and Peyr{\'e}, Gabriel and Bach, Francis},
	journal={Advances in neural information processing systems},
	volume={29},
	doi={10.5555/3157382.3157482},
	year={2016}
}

@inproceedings{Genevay2019,
	title={Sample complexity of {S}inkhorn divergences},
	author={Genevay, Aude and Chizat, L{\'e}naic and Bach, Francis and Cuturi, Marco and Peyr{\'e}, Gabriel},
	booktitle={The 22nd {I}nternational {C}onference on {A}rtificial {I}ntelligence and {S}tatistics},
	pages={1574--1583},
	year={2019},
	organization={PMLR}
}

@article{Goldfeld2024,
	title={Limit theorems for entropic optimal transport maps and Sinkhorn divergence},
	author={Goldfeld, Ziv and Kato, Kengo and Rioux, Gabriel and Sadhu, Ritwik},
	journal={Electronic Journal of Statistics},
	volume={18},
	number={1},
	pages={980--1041},
	year={2024},
	doi={10.1214/24-EJS2217},
	publisher={The Institute of Mathematical Statistics and the Bernoulli Society}
}

@article{Gonzalez2022,
	title={Weak limits of entropy regularized optimal transport; potentials, plans and divergences},
	author={Gonzalez-Sanz, Alberto and Loubes, Jean-Michel and Niles-Weed, Jonathan},
	journal={arXiv preprint arXiv:2207.07427},
	year={2022}
}

@article{Gonzalez&Hundrieser2023,
	title={Weak limits for empirical entropic optimal transport: Beyond smooth costs},
	author={Gonz{\'a}lez-Sanz, Alberto and Hundrieser, Shayan},
	journal={arXiv preprint arXiv:2305.09745},
	year={2023}
}

@article{Groppe&Hundrieser2024,
	title={Lower complexity adaptation for empirical entropic optimal transport},
	author={Groppe, Michel and Hundrieser, Shayan},
	journal={Journal of Machine Learning Research},
	volume={25},
	number={344},
	pages={1--55},
	year={2024}
}

@inproceedings{Kantorovich1942,
	title={On the translocation of masses},
	author={Kantorovich, Leonid V},
	booktitle={Dokl. Akad. Nauk. USSR (NS)},
	volume={37},
	pages={199--201},
	year={1942},
	doi={10.1007/s10958-006-0049-2}
}

@article{Kassraie2024,
	title={Progressive entropic optimal transport solvers},
	author={Kassraie, Parnian and Pooladian, Aram-Alexandre and Klein, Michal and Thornton, James and Niles-Weed, Jonathan and Cuturi, Marco},
	journal={Advances in Neural Information Processing Systems},
	volume={37},
	pages={19561--19590},
	year={2024}
}

@article{Klatt2020,
	title={Empirical regularized optimal transport: Statistical theory and applications},
	author={Klatt, Marcel and Tameling, Carla and Munk, Axel},
	journal={SIAM Journal on Mathematics of Data Science},
	volume={2},
	number={2},
	pages={419--443},
	year={2020},
	doi={10.1137/19M1278788},
	publisher={SIAM}
}

@misc{Liu2025,
	title={Beyond entropic regularization: Debiased Gaussian estimators for discrete optimal transport and general linear programs}, 
	author={Shuyu Liu and Florentina Bunea and Jonathan Niles-Weed},
	year={2025},
	eprint={2505.04312},
	archivePrefix={arXiv},
	primaryClass={math.ST},
	url={https://arxiv.org/abs/2505.04312}, 
}

@article{Manole2024,
	title={Plugin estimation of smooth optimal transport maps},
	author={Manole, Tudor and Balakrishnan, Sivaraman and Niles-Weed, Jonathan and Wasserman, Larry},
	journal={The Annals of Statistics},
	volume={52},
	number={3},
	pages={966--998},
	year={2024},
	publisher={Institute of Mathematical Statistics}
}

@article{Monge1781,
	title={M{\'e}moire sur la th{\'e}orie des d{\'e}blais et des remblais},
	author={Monge, Gaspard},
	journal={Mem. Math. Phys. Acad. Royale Sci.},
	pages={666--704},
	year={1781}
}

@article{Nutz2021,
	title={Introduction to entropic optimal transport},
	author={Nutz, Marcel},
	journal={Lecture notes, Columbia University},
	year={2021},
	url={https://www.math.columbia.edu/~mnutz/docs/EOT_lecture_notes.pdf}
}

@article{Nutz&Wiesel2022,
	title={Entropic optimal transport: {C}onvergence of potentials},
	author={Nutz, Marcel and Wiesel, Johannes},
	journal={Probability Theory and Related Fields},
	volume={184},
	number={1},
	pages={401--424},
	year={2022},
	publisher={Springer},
	doi={10.1007/s00440-021-01096-8}
}

@article{Pal2024,
	title={On the difference between entropic cost and the optimal transport cost},
	author={Pal, Soumik},
	journal={The Annals of Applied Probability},
	volume={34},
	number={1B},
	pages={1003--1028},
	year={2024},
	doi={10.1214/23-AAP1983},
	publisher={Institute of Mathematical Statistics}
}

@book{Rachev&Rueschendorf1998,
	title={Mass Transportation Problems: Volume 1: Theory},
	author={Svetlozar T. Rachev and Ludger R\"uschendorf},
	series={Probability and Its Applications},
	year={1998},
	publisher={Springer New York, NY},
	doi={10.1007/b98893},
	isbn={978-0-387-98350-9}
}

@article{Rigollet&Stromme2025,
	title={On the sample complexity of entropic optimal transport},
	author={Rigollet, Philippe and Stromme, Austin J},
	journal={The Annals of Statistics},
	volume={53},
	number={1},
	pages={61--90},
	year={2025},
	publisher={Institute of Mathematical Statistics}
}

@article{Romisch2004,
	title={Delta Method, Infinite Dimensional},
	author={R{\"o}misch, Werner},
	journal={Encyclopedia of Statistical Sciences},
	volume={3},
	year={2004},
	doi={10.1002/9781118445112.stat01949},
	publisher={Wiley Online Library}
}

@article{Sadhu2025,
	title={Approximation rates of entropic maps in semidiscrete optimal transport},
	author={Sadhu, Ritwik and Goldfeld, Ziv and Kato, Kengo},
	journal={Electronic Communications in Probability},
	volume={30},
	pages={1--13},
	year={2025},
	doi={10.1214/25-ECP682},
	publisher={The Institute of Mathematical Statistics and the Bernoulli Society}
}

@book{Santambrogio2015,
	title={Optimal {T}ransport for {A}pplied {M}athematicians},
	author={Santambrogio, Filippo},
	volume={87},
	year={2015},
	doi={10.1007/978-3-319-20828-2},
	publisher={Springer}
}

@article{Schiebinger2019,
	title={Optimal-transport analysis of single-cell gene expression identifies developmental trajectories in reprogramming},
	author={Schiebinger, Geoffrey and Shu, Jian and Tabaka, Marcin and Cleary, Brian and Subramanian, Vidya and Solomon, Aryeh and Gould, Joshua and Liu, Siyan and Lin, Stacie and Berube, Peter and others},
	journal={Cell},
	volume={176},
	number={4},
	pages={928--943},
	year={2019},
	doi={10.1016/j.cell.2019.01.006},
	publisher={Elsevier}
}

@article{Schmitz2018,
	title={Wasserstein dictionary learning: Optimal transport-based unsupervised nonlinear dictionary learning},
	author={Schmitz, Morgan A and Heitz, Matthieu and Bonneel, Nicolas and Ngole, Fred and Coeurjolly, David and Cuturi, Marco and Peyr{\'e}, Gabriel and Starck, Jean-Luc},
	journal={SIAM Journal on Imaging Sciences},
	volume={11},
	number={1},
	pages={643--678},
	year={2018},
	doi={10.1137/17M1140431},
	publisher={SIAM}
}

@article{Schrieber2016,
	title={Dotmark--a benchmark for discrete optimal transport},
	author={Schrieber, J{\"o}rn and Schuhmacher, Dominic and Gottschlich, Carsten},
	journal={IEEE Access},
	volume={5},
	pages={271--282},
	year={2016},
	doi={10.1109/ACCESS.2016.2639065},
	publisher={IEEE}
}

@article{Shapiro1990,
	title={On concepts of directional differentiability},
	author={Shapiro, Alexander},
	journal={Journal of {O}ptimization {T}heory and {A}pplications},
	volume={66},
	pages={477--487},
	year={1990},
	doi={10.1007/BF00940933},
	publisher={Citeseer}
}

@article{Shapiro1991,
	title={Asymptotic analysis of stochastic programs},
	author={Shapiro, Alexander},
	journal={Annals of Operations Research},
	volume={30},
	pages={169--186},
	year={1991},
	doi={10.1007/BF02204815},
	publisher={Springer}
}

@article{Sommerfeld2019,
	title={Optimal transport: Fast probabilistic approximation with exact solvers},
	author={Sommerfeld, Max and Schrieber, J{\"o}rn and Zemel, Yoav and Munk, Axel},
	journal={Journal of Machine Learning Research},
	volume={20},
	number={105},
	pages={1--23},
	year={2019},
	doi={10.1145/3287324.3287485}
}

@article{NaturePhotonicsSuper-resolution2025,
	title   = {Super-resolution microscopy at its sharpest},
	journal = {Nature Photonics},
	year    = {2025},
	volume  = {19},
	pages   = {219},
	doi     = {10.1038/s41566-025-01632-1}
}

@article{Tameling2021,
	title={Colocalization for super-resolution microscopy via optimal transport},
	author={Tameling, Carla and Stoldt, Stefan and Stephan, Till and Naas, Julia and Jakobs, Stefan and Munk, Axel},
	journal={Nature {C}omputational {S}cience},
	volume={1},
	number={3},
	pages={199--211},
	year={2021},
	doi={10.1038/s43588-021-00050-x},
	publisher={Nature Publishing Group US New York}
}

@book{Vanderbei2020,
	title={Linear Programming: Foundations and Extensions},
	author={Vanderbei, Robert J},
	volume={285},
	year={2020},
	doi={10.1007/978-3-030-39415-8},
	publisher={Springer Nature}
}

@book{van_der_Vaart2000,
	title={Asymptotic {S}tatistics},
	author={van der Vaart, Aad W},
	volume={3},
	year={2000},
	doi={10.1017/CBO9780511802256},
	publisher={Cambridge university press}
}

@book{van_der_Vaart&Wellner2023,
	title={Weak {C}onvergence and {E}mpirical {P}rocesses: {W}ith {A}pplications to {S}tatistics},
	author={van der Vaart, A. W. and Wellner, J. A.},
	isbn={9783031290404},
	series={Springer Series in Statistics},
	%	url={https://books.google.es/books?id=vfzKEAAAQBAJ},
	year={2023},
	doi={10.1007/978-1-4757-2545-2},
	edition={2},
	publisher={Springer International Publishing}
}

@book{Villani2008,
	title={Optimal {T}ransport: {O}ld and {N}ew},
	author={Villani, C.},
	isbn={9783540710509},
	lccn={2008932183},
	series={Grundlehren der mathematischen Wissenschaften},
	%	url={https://books.google.es/books?id=hV8o5R7\_5tkC},
	year={2008},
	doi={10.1007/978-3-540-71050-9},
	publisher={Springer Berlin, Heidelberg}
}

@book{Zeidler2012,
	title={Applied {F}unctional {A}nalysis: {M}ain {P}rinciples and {T}heir {A}pplications},
	author={Zeidler, E.},
	isbn={9781461208211},
	lccn={94043219},
	series={Applied {M}athematical {S}ciences},
	url={https://books.google.de/books?id=abvkBwAAQBAJ},
	year={2012},
	publisher={Springer {N}ew {Y}ork}
}

@article{Zolotarev1983,
	title={Probability metrics},
	author={Zolotarev, Vladimir Mikhailovich},
	journal={Teoriya Veroyatnostei i ee Primeneniya},
	volume={28},
	number={2},
	pages={264--287},
	year={1983},
	doi={10.1137/1128025},
	publisher={Russian Academy of Sciences, Steklov Mathematical Institute of Russian~…}
}

@article{flamary2021pot,
  author  = {R{\'e}mi Flamary and Nicolas Courty and Alexandre Gramfort and Mokhtar Z. Alaya and Aur{\'e}lie Boisbunon and Stanislas Chambon and Laetitia Chapel and Adrien Corenflos and Kilian Fatras and Nemo Fournier and L{\'e}o Gautheron and Nathalie T.H. Gayraud and Hicham Janati and Alain Rakotomamonjy and Ievgen Redko and Antoine Rolet and Antony Schutz and Vivien Seguy and Danica J. Sutherland and Romain Tavenard and Alexander Tong and Titouan Vayer},
  title   = {POT: Python Optimal Transport},
  journal = {Journal of Machine Learning Research},
  year    = {2021},
  volume  = {22},
  number  = {78},
  pages   = {1-8},
  url     = {http://jmlr.org/papers/v22/20-451.html}
}

@article{adler2010quantifying, title={Quantifying colocalization by correlation: The {P}earson correlation coefficient is superior to the {M}ander’s overlap coefficient}, volume={77A}, ISSN={1552-4930}, url={http://dx.doi.org/10.1002/cyto.a.20896}, DOI={10.1002/cyto.a.20896}, number={8}, journal={Cytometry Part A}, publisher={Wiley}, author={Adler, Jeremy and Parmryd, Ingela}, year={2010}, month=mar, pages={733–742} }

@article{zinchuk2014quantitative,
  title={Quantitative colocalization analysis of fluorescence microscopy imagest},
  author={Zinchuk, Vadim and Grossenbacher-Zinchuk, Olga},
  journal={Current protocols in cell biology},
  volume={62},
  number={1},
  pages={4.19.1--4.19.4},
  year={2014},
  publisher={Wiley Online Library},
  doi={10.1002/0471143030.cb0419s62}
}

@article{hell2007far, title={Far-Field Optical Nanoscopy}, volume={316}, ISSN={1095-9203}, url={http://dx.doi.org/10.1126/science.1137395}, DOI={10.1126/science.1137395}, number={5828}, journal={Science}, publisher={American Association for the Advancement of Science (AAAS)}, author={Hell, Stefan W.}, year={2007}, month=may, pages={1153–1158} }

@book{mardia2009directional,
  title={Directional {S}tatistics},
  author={Mardia, Kanti V and Jupp, Peter E},
  year={2009},
  publisher={John Wiley \& Sons}
}

@software{carla_tameling_2021,
  author       = {Carla Tameling and
                  Julia Naas},
  title        = {ctameling/OTC: Optimal Transport Colocalization},
  month        = feb,
  year         = 2021,
  publisher    = {Zenodo},
  version      = {v1.0},
  doi          = {10.5281/zenodo.4553632},
  url          = {https://doi.org/10.5281/zenodo.4553632},
}

@article{naas2024multimatch, title={MultiMatch: geometry-informed colocalization in multi-color super-resolution microscopy}, volume={7}, ISSN={2399-3642}, url={http://dx.doi.org/10.1038/s42003-024-06772-8}, DOI={10.1038/s42003-024-06772-8}, number={1}, journal={Communications Biology}, publisher={Springer Science and Business Media LLC}, author={Naas, Julia and Nies, Giacomo and Li, Housen and Stoldt, Stefan and Schmitzer, Bernhard and Jakobs, Stefan and Munk, Axel}, year={2024}, month=sep }

@article{wang2019spatially, title={Spatially Adaptive Colocalization Analysis in Dual-Color Fluorescence Microscopy}, volume={28}, ISSN={1941-0042}, url={http://dx.doi.org/10.1109/tip.2019.2909194}, DOI={10.1109/tip.2019.2909194}, number={9}, journal={IEEE Transactions on Image Processing}, publisher={Institute of Electrical and Electronics Engineers (IEEE)}, author={Wang, Shulei and Arena, Ellen T. and Becker, Jordan T. and Bement, William M. and Sherer, Nathan M. and Eliceiri, Kevin W. and Yuan, Ming}, year={2019}, month=sep, pages={4471–4485} }

@article{mukherjee2020generalizing, title={Generalizing the Statistical Analysis of Objects’ Spatial Coupling in Bioimaging}, volume={27}, ISSN={1558-2361}, url={http://dx.doi.org/10.1109/lsp.2020.3003821}, DOI={10.1109/lsp.2020.3003821}, journal={IEEE Signal Processing Letters}, publisher={Institute of Electrical and Electronics Engineers (IEEE)}, author={Mukherjee, Suvadip and Gonzalez-Gomez, Catalina and Danglot, Lydia and Lagache, Thibault and Olivo-Marin, Jean-Christophe}, year={2020}, pages={1085–1089} }

@article{lagache2018mapping, title={Mapping molecular assemblies with fluorescence microscopy and object-based spatial statistics}, volume={9}, ISSN={2041-1723}, url={http://dx.doi.org/10.1038/s41467-018-03053-x}, DOI={10.1038/s41467-018-03053-x}, number={1}, journal={Nature Communications}, publisher={Springer Science and Business Media LLC}, author={Lagache, Thibault and Grassart, Alexandre and Dallongeville, Stéphane and Faklaris, Orestis and Sauvonnet, Nathalie and Dufour, Alexandre and Danglot, Lydia and Olivo-Marin, Jean-Christophe}, year={2018}, month=feb }

@misc{Musink2025Staudt,
  author = {Thomas Staudt},
  title = {{MuSink}},
  year = {2025},
  url = {https://github.com/tscode/MuSink.jl}}

@article{Hirtl_2025, title={A novel super-resolution STED microscopy analysis approach to observe spatial MCU and MICU1 distribution dynamics in cells}, volume={1872}, ISSN={0167-4889}, url={http://dx.doi.org/10.1016/j.bbamcr.2025.119900}, DOI={10.1016/j.bbamcr.2025.119900}, number={3}, journal={Biochimica et Biophysica Acta (BBA) - Molecular Cell Research}, publisher={Elsevier BV}, author={Hirtl, Martin and Gottschalk, Benjamin and Bachkoenig, Olaf A. and Oflaz, Furkan E. and Madreiter-Sokolowski, Corina and Høydal, Morten Andre and Graier, Wolfgang F.}, year={2025}, month=mar, pages={119900} }

@article{Vaisey_2022, title={Piezo1 as a force-through-membrane sensor in red blood cells}, volume={11}, ISSN={2050-084X}, url={http://dx.doi.org/10.7554/elife.82621}, DOI={10.7554/elife.82621}, journal={eLife}, publisher={eLife Sciences Publications, Ltd}, author={Vaisey, George and Banerjee, Priyam and North, Alison J and Haselwandter, Christoph A and MacKinnon, Roderick}, year={2022}, month=dec }

@article{Strong_2023, title={Activation of multiple Eph receptors on neuronal membranes correlates with the onset of optic neuropathy}, volume={10}, ISSN={2326-0254}, url={http://dx.doi.org/10.1186/s40662-023-00359-w}, DOI={10.1186/s40662-023-00359-w}, number={1}, journal={Eye and Vision}, publisher={Springer Science and Business Media LLC}, author={Strong, Thomas A. and Esquivel, Juan and Wang, Qikai and Ledon, Paul J. and Wang, Hua and Gaidosh, Gabriel and Tse, David and Pelaez, Daniel}, year={2023}, month=oct }

@Inbook{Munk2020,
author="Munk, Axel
and Staudt, Thomas
and Werner, Frank",
editor="Salditt, Tim
and Egner, Alexander
and Luke, D. Russell",
title="Statistical Foundations of Nanoscale Photonic Imaging",
bookTitle="Nanoscale Photonic Imaging",
year="2020",
publisher="Springer International Publishing",
address="Cham",
pages="125--143",
isbn="978-3-030-34413-9",
doi="10.1007/978-3-030-34413-9_4",
url="https://doi.org/10.1007/978-3-030-34413-9_4"
}
\clearpage	

\section{Supplementary material}
	The aim of this section is to provide all the mathematical details omitted in the body of this paper. We will denote the space of continuous functions on $\mathfrak{X}$ by $\mathcal{C}(\mathfrak{X})$. Given $u\in\mathcal{C}(\mathfrak{X})$, the set $\overline{\{x\in\mathfrak{X}:u(x)\neq0\}}$ is called support of $u$ and denoted by $\operatorname{supp}(u)$, where $\overline{A}$ denotes the topological closure of any set $A\subseteq\mathfrak{X}$. We say that $u$ is of compact support, or that it belongs to the space $\mathcal{C}_{\operatorname{c}}(\mathfrak{X})$, if $\operatorname{supp}(u)$ is compact. It is immediate that $\mathcal{C}_{\operatorname{c}}(\mathfrak{X})\subseteq\mathcal{C}_{\operatorname{b}}(\mathfrak{X})$. Another space of continuous functions to be used in this section is $\mathcal{C}_{0}(\mathfrak{X})$, the space of continuous functions that \enquote{vanishes at infinity}. Formally, $u\in\mathcal{C}_{0}(\mathfrak{X})$ if and only if for every $\varepsilon>0$, the set $\{x\in\mathfrak{X}:|u(x)|\geq\varepsilon\}$ is compact. Then, it is clear that $\mathcal{C}_{\operatorname{c}}(\mathfrak{X})\subseteq\mathcal{C}_{0}(\mathfrak{X})$. Analogously to the notation introduced in subsection \ref{Subsec:Theory}: $\mathcal{C}_{0}(\mathfrak{X})_{1}=\mathcal{C}_{0}(\mathfrak{X})\cap\mathcal{C}_{\operatorname{b}}(\mathfrak{X})_{1}$, the unit ball of $\mathcal{C}_{0}(\mathfrak{X})$; for $s>0$, $\mathcal{C}_{0}^{s}(\mathfrak{X})=\mathcal{C}_{0}(\mathfrak{X})\cap\mathcal{C}_{\operatorname{b}}^{s}(\mathfrak{X})$, the space of $s$-H\"{o}lder continuous functions that vanish at infinity; and $\mathcal{C}_{0}^{s}(\mathfrak{X})_{1}=\mathcal{C}_{0}^{s}(\mathfrak{X})\cap\mathcal{C}_{\operatorname{b}}(\mathfrak{X})_{1}$.
		\subsubsection*{Mathematical approach}
			Given sequences of real numbers $\left(r_{m}\right)_{m\in\mathbb{N}}$ and $\left(s_{n}\right)_{n\in\mathbb{N}}$ and sequences of measures $\left(\mu_{m}\right)_{m\in\mathbb{N}}$ and $\left(\nu_{n}\right)_{n\in\mathbb{N}}$ in $\mathcal{P}(\mathcal{X})$, let us assume that $r_{m}\,\left(\mu_{m}-\mu\right)$ and $s_{n}\,\left(\nu_{n}-\nu\right)$ converges weakly on $\ell^{\infty}(\mathcal{F})$. Therefore, we aim to study the weak convergence of
			\begin{equation}\label{Eqn:AsympGoal}
				\sqrt{\frac{r_{m}^{2}\,s_{n}^{2}}{r_{m}^{2}+s_{n}^{2}}}\,\left(\Phi\left(\mu_{m},\nu_{n}\right)-\Phi(\mu,\nu)\right).
			\end{equation}
			Our main tool is the (extended) delta method (see \cite{Shapiro1991}, \cite{Dumbgen1993} or \cite{Romisch2004}). Using this approach, the problem of finding a limit for \eqref{Eqn:AsympGoal} is split into two main tasks: Hadamard directional differentiability of the operator $\Phi$, introduced in Definition \ref{Def:GeneralizedColoc}; and the asymptotic study of sequences of estimators $\left(\mu_{m}\right)_{m\in\mathbb{N}}$ and $\left(\nu_{n}\right)_{n\in\mathbb{N}}$.
	\subsection{Hadamard directional differentiability review}
		Given $\mathbb{D},\mathbb{E}$ Banach spaces with norms $\|\cdot\|_{\mathbb{D}}$ and $\|\cdot\|_{\mathbb{E}}$, respectively; $\mathbb{D}_{\Gamma}\subseteq\mathbb{D}$ and $\Gamma:\mathbb{D}_{\Gamma}\longrightarrow\mathbb{E}$, it is said that $\Gamma$ is \emph{Hadamard directional differentiable} at a point $\theta$ tangentially to $\mathbb{D}_{\theta}\subseteq\mathbb{D}$ if and only if there exists a map $\Gamma_{\theta}^{\prime}:\mathbb{D}_{\theta}\longrightarrow\mathbb{E}$ such that
		\begin{equation}
			\left\|\frac{\Gamma\left(\theta+t_{j}\,h_{j}\right)-\Gamma(\theta)}{t_{j}}-\Gamma_{\theta}^{\prime}(h)\right\|_{\mathbb{E}}\longrightarrow0,
		\end{equation}
		when $j\longrightarrow\infty$, where $\left(t_{j}\right)_{j\in\mathbb{N}}\in\mathbb{R}^{\mathbb{N}}$ and $t_{j}\searrow0$; and $\left(h_{j}\right)_{j\in\mathbb{N}}\in\mathbb{D}^{\mathbb{N}}$, $h\in\mathbb{D}_{\theta}$ with $\theta+t_{j}\,h_{j}\in\mathbb{D}_{\Gamma}$ and $h_{j}\longrightarrow h$ in $\mathbb{D}$.
		
		Additionally, it is said that $\Gamma$ is \emph{bounded directional differentiable} at a point $\theta$ tangentially to $\mathbb{D}_{\theta}\subseteq\mathbb{D}$ if and only if there exists a map $\Gamma_{\theta}^{\prime}:\mathbb{D}_{\theta}\longrightarrow\mathbb{E}$ such that
		\begin{equation}
			\sup_{\|h\|_{\mathbb{D}}=1}\left(\left\|\frac{\Gamma(\theta+t\,h)-\Gamma(\theta)}{t}-\Gamma_{\theta}^{\prime}(h)\right\|_{\mathbb{E}}\right)\longrightarrow0,
		\end{equation}
		when $t\searrow0$ in $\mathbb{R}$ and $h\in\mathbb{D}_{\theta}$ with $\theta+t\,h\in\mathbb{D}_{\Gamma}$. When $\mathbb{D}_{\theta}=\mathbb{D}$ and $\Gamma_{\theta}^{\prime}$ is linear, this notion of differentiability is known as \emph{Fréchet differentiability}.
		
		For our purposes, the role of $\Gamma$ will be played by $\Phi$, the domain $\mathbb{D}_{\Gamma}$ will be $\mathcal{P}(\mathcal{X})^{2}$; and $\theta$ will be $(\mu,\nu)$. For the definition of the tangent set $\mathbb{D}_{\theta}$, the domain of the derivative, it has to be fulfilled that the pair $\left(\mu+t\,h^{\mu},\nu+t\,h^{\nu}\right)$ are probability measures for every $t\in[0,\infty)$ and $\left(h^{\mu},h^{\nu}\right)\in\mathbb{D}_{\theta}$. In formal terms, $\mathbb{D}_{\theta}$ is the space of valid perturbations of $(\mu,\nu)$ such that $\left(\mu+t\,h^{\mu},\nu+t\,h^{\nu}\right)\mapsto\pi_{\mu+t\,h^{\mu},\nu+t\,h^{\nu}}^{\lambda}$ is still well defined. A natural choice is $\mathbb{D}_{\theta}=\left(-\mu+\mathcal{P}(\mathcal{X})\right)\times\left(-\nu+\mathcal{P}(\mathcal{X})\right)$. 
		
		Additionally, the topology on $\mathbb{D}_{\theta}$ and $\mathbb{D}_{\Gamma}$ is given by the norm of $\mathbb{D}$, which in our scenario is the Banach space where $\mathcal{P}(\mathcal{X})$ is to be embededded. In this paper, we endow $\mathcal{P}(\mathcal{X})$ with the norm
		\begin{equation}\label{Eqn:MeasureNorm}
			\begin{aligned}
				\left\|\left(\eta_{1},\eta_{2}\right)\right\|_{\mathbb{D}}&=\max\left(\left\|\eta_{1}\right\|_{\ell^{\infty}\left(\mathcal{C}_{0}(\mathcal{X})_{1}\cup\mathcal{F}_{\mu}\right)},\left\|\eta_{2}\right\|_{\ell^{\infty}\left(\mathcal{C}_{0}(\mathcal{X})_{1}\cup\mathcal{F}_{\nu}\right)}\right),
				\\
				\|\eta\|_{\ell^{\infty}\left(\mathcal{C}_{0}(\mathcal{X})_{1}\cup\mathcal{G}\right)}&=\sup_{u\in \mathcal{C}_{0}(\mathcal{X})_{1}\cup\mathcal{G}}\left(\left|\int_{\mathcal{X}}u\,\operatorname{d}\!\eta\right|\right),\quad\eta\in\mathcal{M}(\mathcal{X}),\ \mathcal{G}=\left\{\mathcal{F}_{\mu},\mathcal{F}_{\nu}\right\}.
			\end{aligned}
		\end{equation}
		This norm captures the convergence of measures $\mathcal{M}(\mathcal{X})$ in two spaces. First of them is $\ell^{\infty}\left(\mathcal{C}_{0}(\mathcal{X})_{1}\right)$. Controlling the convergence of measures in terms of this norm guarantees that the bounded directional derivative of the duality mapping $\psi$ exists. More precisely, if \eqref{Eqn:ImplicitRelationPotentials} is seen as the implicit equation $\Psi(\mu,\nu,f,g)=0$ with $(f,g)\in\mathcal{C}_{\operatorname{b}}(\mathcal{X})\times\mathcal{C}_{\operatorname{b}}(\mathcal{X})$, $\int_{\mathcal{X}}g\,\operatorname{d}\!\nu=0$ and
		\begin{equation}\label{Eqn:Psi}
			\Psi(\mu,\nu,f,g)=\left(\begin{array}{l}
				f+\lambda\,\log\left(\int_{\mathcal{X}}\xi(0,g)(x,y)\,\operatorname{d}\!\nu(y)\right)
				\\
				\begin{aligned}
					g&\textstyle+\lambda\,\log\left(\int_{\mathcal{X}}\xi(f,0)(x,y)\,\operatorname{d}\!\mu(x)\right)
					\\
					&\textstyle-\int_{\mathcal{X}}\lambda\,\log\left(\int_{\mathcal{X}}\xi(f,0)(x,y)\,\operatorname{d}\!\mu(x)\right)\operatorname{d}\!\nu(y)
				\end{aligned}
			\end{array}\right),
		\end{equation}
		using the implicit mapping theorem it is proved that $\psi$ exists and it is bounded directional differentiable (on the space $\mathcal{P}(\mathcal{X})^{2}$ endowed with the topology induced by $\ell^{\infty}\left(\mathcal{C}_{0}(\mathcal{X})_{1}\right)^{2}$ ). The second space where the norm refers to are $\ell^{\infty}\left(\mathcal{F}_{\mu}\right)$ and $\ell^{\infty}\left(\mathcal{F}_{\nu}\right)$. As it was seen in Subsection \ref{Subsec:Theory}, those spaces are the ones where the empirical processes $\mathbb{G}_{m}^{\mu}$ and $\mathbb{G}_{n}^{\nu}$ naturally falls within, respectively, in the context of colocalization problem.
		
		Finally, recall that at Definition \ref{Def:GeneralizedColoc}, the transformation $\Phi$ is presented as a mapping from the set of Borel's probability measures $\mathcal{P}(\mathcal{X})$ to the set of functionals on a class $\mathcal{F}\subseteq\operatorname{L}^{1}(\mu\otimes\nu)$. Hence, a natural \emph{universal} choice to play the role of $\mathbb{E}$ is $\ell^{\infty}(\mathcal{F})$ with its natural norm.
		\subsection{Differentiability result}
		In order to provide a general asymptotic result for the (generalized) colocalization mapping, the Hadamard directional differentiability of $\Phi$ is given in the next theorem.
		\begin{Th}\label{Th:DiffPhi}
			Let us assume \ref{Itm:Con}, \ref{Itm:LCm} and \ref{Itm:Bnd}. The generalized colocalization mapping $\Phi$ is Hadamard directional differentiable at any $(\mu,\nu)\in\mathcal{P}(\mathcal{X})^{2}$ tangentially to $\left(-\mu+\mathcal{P}(\mathcal{X})\right)\times\left(-\nu+\mathcal{P}(\mathcal{X})\right)$ with derivative
			\begin{equation}\label{Eqn:DerivativePhi}
				\Phi_{(\mu,\nu)}^{\prime}\left(h^{\mu},h^{\nu}\right)(u)=\mathcal{I}_{1}(u)+\mathcal{I}_{2}(u),
			\end{equation}
			where
			\begin{equation*}
				\begin{aligned}
					&\begin{aligned}
						\mathcal{I}_{1}(u)&=\frac{1}{\lambda}\,\int_{\mathcal{X}^{2}}u(x,y)\,\xi\left(f_{\mu,\nu}^{\lambda},g_{\mu,\nu}^{\lambda}\right)(x,y)\,\left(\psi_{(\mu,\nu)}^{\prime}\left(h^{\mu},h^{\nu}\right)^{(1)}(x)\right.
						\\
						&\quad\left.+\psi_{(\mu,\nu)}^{\prime}\left(h^{\mu},h^{\nu}\right)^{(2)}(y)\right)\,\operatorname{d}\!\mu\otimes\nu(x,y),
					\end{aligned}
					\\
					&\mathcal{I}_{2}(u)=\int_{\mathcal{X}^{2}}u(x,y)\,\xi\left(f_{\mu,\nu}^{\lambda},g_{\mu,\nu}^{\lambda}\right)(x,y)\,\operatorname{d}\!\left(\mu\otimes h^{\nu}+h^{\mu}\otimes\nu\right)(x,y),
				\end{aligned}
			\end{equation*}
			where the superindex $^{(i)}$ for $i\in\{1,2\}$ denotes the projection on the $i$-th coordinate.
		\end{Th}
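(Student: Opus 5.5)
The plan is to write $\Phi$ as a composition of a potential map and a bilinear integration map, and apply the chain rule for Hadamard directional derivatives. Fix $t_{j}\searrow0$ and $h_{j}=(h_{j}^{\mu},h_{j}^{\nu})\to h=(h^{\mu},h^{\nu})$ in the norm $\|\cdot\|_{\mathbb{D}}$ of \eqref{Eqn:MeasureNorm}, with $\mu_{j}=\mu+t_{j}h_{j}^{\mu}$ and $\nu_{j}=\nu+t_{j}h_{j}^{\nu}$ probability measures. Write $(f_{j},g_{j})=\psi(\mu_{j},\nu_{j})$ and $(f,g)=\psi(\mu,\nu)$. Then for every $u\in\mathcal{F}$,
\begin{equation*}
\Phi(\mu_{j},\nu_{j})(u)-\Phi(\mu,\nu)(u)=A_{j}(u)+B_{j}(u),
\end{equation*}
where
\begin{equation*}
A_{j}(u)=\int u\,\bigl(\xi(f_{j},g_{j})-\xi(f,g)\bigr)\,\operatorname{d}\!\mu_{j}\otimes\nu_{j},\qquad
B_{j}(u)=\int u\,\xi(f,g)\,\operatorname{d}\!\bigl(\mu_{j}\otimes\nu_{j}-\mu\otimes\nu\bigr).
\end{equation*}
After dividing by $t_{j}$, $A_{j}$ should produce $\mathcal{I}_{1}$ and $B_{j}$ should produce $\mathcal{I}_{2}$. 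All convergences must be uniform in $u\in\mathcal{F}$.

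\textbf{The term $B_{j}$.} Expanding the product measure gives
\begin{equation*}
\mu_{j}\otimes\nu_{j}-\mu\otimes\nu=t_{j}\bigl(h_{j}^{\mu}\otimes\nu+\mu\otimes h_{j}^{\nu}\bigr)+t_{j}^{2}\,h_{j}^{\mu}\otimes h_{j}^{\nu}.
\end{equation*}
By Fubini, $\int u\,\xi(f,g)\,\operatorname{d}\!(h_{j}^{\mu}\otimes\nu)=h_{j}^{\mu}(F_{u})$ with $F_{u}=\int u(\cdot,y)\xi(f,g)(\cdot,y)\,\operatorname{d}\!\nu(y)\in\mathcal{F}_{\mu}$. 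Hence
\begin{equation*}
\sup_{u\in\mathcal{F}}\bigl|h_{j}^{\mu}(F_{u})-h^{\mu}(F_{u})\bigr|\le\|h_{j}-h\|_{\mathbb{D}}\to0,
\end{equation*}
and the same argument applies to $\nu$ via $\mathcal{F}_{\nu}$. This is exactly why $\mathcal{F}_{\mu},\mathcal{F}_{\nu}$ enter the norm. The remainder $t_{j}\int u\,\xi\,\operatorname{d}\!(h_{j}^{\mu}\otimes h_{j}^{\nu})$ is $O(t_{j})$ uniformly. The reason is that $h_{j}^{\mu},h_{j}^{\nu}$ are differences of probability measures, so they have total variation at most $2$, and by \ref{Itm:Bnd} together with the bounds on the potentials (Lemma \ref{Lema:BoundsPotentials}) the integrand $u\,\xi$ is uniformly bounded. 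This gives $B_{j}/t_{j}\to\mathcal{I}_{2}$ uniformly on $\mathcal{F}$.

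\textbf{The term $A_{j}$.} I would invoke the bounded directional differentiability of $\psi$ described after \eqref{Eqn:Psi}. Applying the implicit function theorem to $\Psi(\mu,\nu,f,g)=0$ in $\mathcal{C}_{\operatorname{b}}(\mathcal{X})^{2}$ gives
\begin{equation*}
\bigl\|(f_{j}-f,\,g_{j}-g)/t_{j}-\psi'_{(\mu,\nu)}(h)\bigr\|_{\infty}\to0.
\end{equation*}
To obtain this, two facts are needed.
\begin{enumerate}
\item $\partial_{(f,g)}\Psi$ is invertible on the normalized subspace. Its linearization is $\mathrm{Id}$ plus the conditional-expectation operators of $\pi^{\lambda}_{\mu,\nu}$. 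Since the density $\xi(f,g)$ is bounded below by a positive constant (Lemma \ref{Lema:BoundsPotentials}), this operator is a contraction off constants, and a Neumann/Fredholm argument shows it is invertible.
\item $\Psi$ is differentiable in $(\mu,\nu)$ with respect to $\ell^{\infty}(\mathcal{C}_{0}(\mathcal{X})_{1})$. This is where \ref{Itm:LCm} and \ref{Itm:Con} enter. Local compactness lets us identify $\mathcal{M}(\mathcal{X})$ with the dual of $\mathcal{C}_{0}(\mathcal{X})$. Continuity of $c$ makes the family $\{\xi(0,g)(x,\cdot):x\}$ equicontinuous and bounded, so perturbations tested against it are controlled by the $\mathcal{C}_{0}$-norm. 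Tightness of $\mu,\nu$ is used to cut off outside compacts.
\end{enumerate}
With $\psi'$ in hand, a Taylor expansion of the exponential gives
\begin{equation*}
\xi(f_{j},g_{j})-\xi(f,g)=\frac{t_{j}}{\lambda}\,\xi(f,g)\,\bigl(\psi'^{(1)}(x)+\psi'^{(2)}(y)\bigr)+o(t_{j})
\end{equation*}
uniformly on $\mathcal{X}^{2}$. Integrating against $u\,\operatorname{d}\!\mu_{j}\otimes\nu_{j}$ then yields $\mathcal{I}_{1}(u)$. Replacing $\mu_{j}\otimes\nu_{j}$ by $\mu\otimes\nu$ costs only $O(t_{j})$ in total variation times bounded integrands, so that error vanishes after division by $t_{j}$.

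\textbf{Main obstacle.} The hard step is the uniform, rather than merely pointwise, convergence of $(f_{j}-f,\,g_{j}-g)/t_{j}$ in sup norm when $h_{j}\to h$ only in the weak-type norm $\|\cdot\|_{\mathbb{D}}$. The map $\mu\mapsto\int\xi(0,g)(x,y)\,\operatorname{d}\!\mu$ must be shown to be Fréchet differentiable uniformly in $x$, which requires compactness of the family of functions $\{\xi(0,g)(x,\cdot)\}_{x}$ in $\mathcal{C}_{0}$-tested topology. I would first establish this on compacts via Arzelà–Ascoli. I would then control tails using the fact that all measures are probabilities and that $c$ is bounded. If that fails, I would fall back on the approach of Goldfeld et al.\ for compact spaces, exhausting $\mathcal{X}$ by compacts.
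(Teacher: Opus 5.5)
Your proposal is correct and follows essentially the paper's proof. The paper splits the difference quotient into three pieces: $\mathcal{T}_1$, the density increment, handled by the directional differentiability of $\psi$ (from the implicit function theorem for $\Psi$) together with the Fr\'echet expansion of $\xi$; $\mathcal{T}_2$, the first-order change of the measure, controlled through the $\mathcal{F}_\mu$, $\mathcal{F}_\nu$ components of $\|\cdot\|_{\mathbb{D}}$; and a quadratic remainder $\mathcal{T}_3$ bounded in total variation. That is your $A_j/B_j$ split up to regrouping, and the only real difference is that the paper quotes Gonz\'alez-Sanz, Loubes and Niles-Weed for the invertibility of $\partial_2\Psi$ where you sketch a contraction/Neumann-series argument.

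The main obstacle you single out does not arise. Under \ref{Itm:LCm}, $\sup_{u\in\mathcal{C}_{0}(\mathcal{X})_{1}}\bigl|\int u\,\operatorname{d}\!\eta\bigr|$ is exactly the total variation of $\eta$ by Riesz representation (this is the isometric embedding the paper mentions). Hence $\eta\mapsto\int\xi(0,g)(x,y)\,\operatorname{d}\!\eta(y)$ is bounded and linear uniformly in $x$, with no Arzel\`a--Ascoli or tightness argument needed. The same fact is what keeps the total variations of $h_j^\mu,h_j^\nu$ bounded in your remainder and replacement estimates. Your stated reason, that $h_j^\mu$ is a difference of probability measures, does not give this in your setup: there only $t_j h_j^\mu$ is such a difference, which yields total variation at most $2/t_j$.
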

		It is worth to make some comments before proving this result. Firstly, as it is highlighted in expression \eqref{Eqn:DerivativePhi}, the derivative of $\Phi$ is constituted by two parts. In the first term $\mathcal{I}_{1}$, the duality mapping $\psi$ is evaluated on the pair of measures $\mu$ and $\nu$. Hence, as $\psi$ is an implicit function, this part of the proof strongly depends on the differentiability properties of the map $\Psi$ in \eqref{Eqn:Psi}. In the second one $\mathcal{I}_{2}$, the measures $\mu$ an $\nu$ act as integral functionals on classes $\mathcal{F}_{\mu}$ and $\mathcal{F}_{\nu}$, respectively. Since $\Phi$ is an integral operator where the integrand and the measure depend on $(\mu,\nu)$ it is not surprising that the proof of Theorem \ref{Th:DiffPhi} mainly consists on interchange the derivative (in the space of measures) and the integral.
		
		Usually, interchanging derivative and integral results is strongly related to the dominated convergence theorem. In the following lemmas is seen that\break$\xi\left(f_{\mu,\nu}^{\lambda},g_{\mu,\nu}^{\lambda}\right)(x,y)$ is bounded differentiable and the derivative satisfies the necessary requirements for this result to be applied. To begin with, we will obtain a known bound on the potentials $f_{\mu,\nu}^{\lambda}$ and $g_{\mu,\nu}^{\lambda}$ under the convention that $\int_{\mathcal{X}}g_{\mu,\nu}^{\lambda}\,\operatorname{d}\!\nu=0$.
		\begin{Lema}\label{Lema:BoundsPotentials}
			Assume \ref{Itm:Con}. For all $x,y\in\mathcal{X}$, we have that
			\begin{equation}
				\begin{aligned}
					&\inf_{y\in\mathcal{X}}\left(c(x,y)-g_{\mu,\nu}^{\lambda}(y)\right)\leq f_{\mu,\nu}^{\lambda}(x)\leq\int_{\mathcal{X}}c(x,y)\,\operatorname{d}\!\nu(y),
					\\
					&\inf_{x\in\mathcal{X}}\left(c(x,y)-f_{\mu,\nu}^{\lambda}(x)\right)\leq g_{\mu,\nu}^{\lambda}(y)\leq\int_{\mathcal{X}}c(x,y)\,\operatorname{d}\!\mu(x).
				\end{aligned}
			\end{equation}
			In particular, $\left\|f_{\mu,\nu}^{\lambda}\right\|_{\infty},\left\|g_{\mu,\nu}^{\lambda}\right\|_{\infty}\leq\|c\|_{\infty}$.
		\end{Lema}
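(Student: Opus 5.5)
The plan is to work directly from the Schrödinger system \eqref{Eqn:ImplicitRelationPotentials}, written as fixed-point identities. The upper bounds will come from Jensen's inequality and the lower bounds from bounding an exponential integral by its supremum. The normalization $\int_{\mathcal{X}}g_{\mu,\nu}^{\lambda}\,\operatorname{d}\!\nu=0$ is what pins down the additive constant and closes the sup-norm bound.

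\emph{Upper bounds.} Since $\log$ is concave, Jensen's inequality applied to the first equation of \eqref{Eqn:ImplicitRelationPotentials} gives
\begin{equation*}
f_{\mu,\nu}^{\lambda}(x)=-\lambda\log\int_{\mathcal{X}}\exp\left(\frac{g_{\mu,\nu}^{\lambda}(y)-c(x,y)}{\lambda}\right)\operatorname{d}\!\nu(y)\leq-\int_{\mathcal{X}}\left(g_{\mu,\nu}^{\lambda}(y)-c(x,y)\right)\operatorname{d}\!\nu(y)=\int_{\mathcal{X}}c(x,y)\,\operatorname{d}\!\nu(y),
\end{equation*}
where the last equality uses $\int g\,\operatorname{d}\!\nu=0$. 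For $g$ the same step gives $g(y)\leq\int c(x,y)\,\operatorname{d}\!\mu(x)-\int f\,\operatorname{d}\!\mu$. So the step that needs care is $\int f\,\operatorname{d}\!\mu\ge 0$.

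\emph{Sign of $\int f\,\operatorname{d}\!\mu$.} The two equations of \eqref{Eqn:ImplicitRelationPotentials} together with \eqref{Eqn:Densitypi} make $\pi^\lambda_{\mu,\nu}$ a probability measure with density $\xi(f,g)$. By the duality \eqref{Eqn:DualEOT} at the optimum, the exponential term integrates to $\lambda$, so
\begin{equation*}
\int f\,\operatorname{d}\!\mu+\int g\,\operatorname{d}\!\nu=\operatorname{EOT}_{c}^{\lambda}(\mu,\nu)=\int c\,\operatorname{d}\!\pi+\lambda\,\operatorname{KL}(\pi|\mu\otimes\nu)\geq0,
\end{equation*}
because $c\geq0$ and $\operatorname{KL}\geq0$. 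Since $\int g\,\operatorname{d}\!\nu=0$, this gives $\int f\,\operatorname{d}\!\mu\geq0$, and hence $g(y)\leq\int c(x,y)\,\operatorname{d}\!\mu(x)$. I expect this to be the main obstacle: the bounds are not symmetric in $f$ and $g$ under the normalization, and the naive Jensen step does not close for $g$ without this argument.

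\emph{Lower bounds.} Bound the integrand by its supremum. Since $\nu$ is a probability measure,
\begin{equation*}
\int_{\mathcal{X}}\exp\left(\frac{g(y)-c(x,y)}{\lambda}\right)\operatorname{d}\!\nu(y)\leq\exp\left(\frac{\sup_y\left(g(y)-c(x,y)\right)}{\lambda}\right),
\end{equation*}
so $f(x)\geq-\sup_y(g(y)-c(x,y))=\inf_y(c(x,y)-g(y))$. The bound for $g$ is symmetric.

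\emph{Sup-norm bound.} The upper bounds give $f,g\leq\|c\|_\infty$. For the lower bounds, $c\geq0$ gives $f\geq-\sup g\geq-\|c\|_\infty$ and likewise $g\geq-\sup f\geq-\|c\|_\infty$. Hence $\|f\|_\infty,\|g\|_\infty\leq\|c\|_\infty$.

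If one prefers to avoid duality in the sign step, an alternative is to take $f,g$ as chosen versions defined everywhere through the fixed-point formulas; this is legitimate because $c$ is bounded. Then apply Jensen directly to $\int f\,\operatorname{d}\!\mu$ and $\int g\,\operatorname{d}\!\nu$ to compare them.
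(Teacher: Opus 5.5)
Your proposal is correct and is essentially the paper's proof: Jensen on the Schr\"odinger system for the upper bounds, $\int f_{\mu,\nu}^{\lambda}\,\operatorname{d}\!\mu=\operatorname{EOT}_{c}^{\lambda}(\mu,\nu)\geq0$ from duality to close the bound on $g_{\mu,\nu}^{\lambda}$, and bounding the exponent by its supremum for the lower bounds. You are also slightly more careful than the paper, which claims $\operatorname{EOT}_{c}^{\lambda}(\mu,\nu)>0$ (false e.g.\ for $c\equiv0$) and says the sup-norm bound follows from the upper bounds alone, whereas you correctly combine the lower bounds with $c\geq0$. Only your closing ``alternative'' does not work as sketched: Jensen with respect to $\mu$ and $\nu$ gives $\int f\,\operatorname{d}\!\mu+\int g\,\operatorname{d}\!\nu\leq\int c\,\operatorname{d}\!\mu\otimes\nu$, which is the wrong direction, so the sign really does need $\operatorname{KL}\geq0$ (Jensen under $\pi_{\mu,\nu}^{\lambda}$), as in your main argument.
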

		\begin{proof}
			Our proof follows from the same lines of \cite[Lemma 4.9]{Nutz2021}. Let us start with the upper bounds. By Jensen's inequality
			\begin{equation}
				\begin{aligned}
					f_{\mu,\nu}^{\lambda}(x)&=-\lambda\,\log\left(\int_{\mathcal{X}}\exp\left(\frac{g_{\mu,\nu}^{\lambda}(y)-c(x,y)}{\lambda}\right)\,\operatorname{d}\!\nu(y)\right)
					\\
					&\leq\int_{\mathcal{X}}\left(c(x,y)-g_{\mu,\nu}^{\lambda}(y)\right)\,\operatorname{d}\!\nu(y).
				\end{aligned}
			\end{equation}
			Now recall that $\int_{\mathcal{X}}g_{\mu,\nu}^{\lambda}\,\operatorname{d}\!\nu(y)=0$, so the first upper bound is obtained. Analogously,
			\begin{equation}
				g_{\mu,\nu}^{\lambda}(y)\leq\int_{\mathcal{X}}\left(c(x,y)-f_{\mu,\nu}^{\lambda}(x)\right)\,\operatorname{d}\!\mu(x).
			\end{equation}
			Looking at the dual formulation of the entropic optimal transport problem \eqref{Eqn:DualEOT}, we have that $\int_{\mathcal{X}}f_{\mu,\nu}^{\lambda}(x)\,\operatorname{d}\!\mu(x)=\operatorname{EOT}_{c}^{\lambda}(\mu,\nu)>0$, so the right hand side of the last inequality can be bounded by $\int_{\mathcal{X}}c(x,y)\,\operatorname{d}\!\mu(x)$ and the upper bound for $g_{\mu,\nu}^{\lambda}$ is obtained, too. Further, from these upper bounds, the last line of the lemma is immediate by definition of supremum norm.
			
			For the lower bounds, take infima on the exponents of \eqref{Eqn:ImplicitRelationPotentials}.
		\end{proof}
		It is important to mention that in the previous proof, the continuity hypothesis on \ref{Itm:Con} is not used.
		\begin{Lema}\label{Lema:ourDiffDens}
			Assume \ref{Itm:Con}.
			\begin{enumerate}
				\item Let $f,g\in\mathcal{C}_{\operatorname{b}}(\mathcal{X})$ with $\|f\|_{\infty},\|g\|_{\infty}\leq\|c\|_{\infty}$ and $\int_{\mathcal{X}}g\,\operatorname{d}\!\nu=0$. Denote, with abuse of notation, by $\xi$ the operator on $\mathcal{C}_{\operatorname{b}}(\mathcal{X})\times\mathcal{C}_{\operatorname{b}}(\mathcal{X})$ defined by $\xi(f,g)(x,y)$, where $x,y\in\mathcal{X}$. Then $\xi$ is Fréchet differentiable with derivative $D\xi_{(f,g)}\left(h^{f},h^{g}\right)(x,y)=\xi(f,g)(x,y)\,\frac{h^{f}(x)+h^{g}(y)}{\lambda}$.
				\item Further, assume \ref{Itm:LCm}. Let $\mu,\nu\in\mathcal{P}(\mathcal{X})$. Then, the duality mapping $\psi$ is bounded directional differentiable tangentially to $\left(-\mu+\mathcal{P}(\mathcal{X})\right)$\break$\times\left(-\nu+\mathcal{P}(\mathcal{X})\right)$ with derivative  $\psi_{(\mu,\nu)}^{\prime}=-\partial_{2}\Psi_{(\mu,\nu,\psi(\mu,\nu))}^{-1}\,\partial_{1}\Psi_{(\mu,\nu,\psi(\mu,\nu))}$.
				
				Define $\Xi(\mu,\nu)=\xi\left(f_{\mu,\nu}^{\lambda},g_{\mu,\nu}^{\lambda}\right)$. $\Xi$ is bounded directional differentiable tangentially to $\left(-\mu+\mathcal{P}(\mathcal{X})\right)\times\left(-\nu+\mathcal{P}(\mathcal{X})\right)$ with derivative
				\begin{equation}
					\begin{aligned}
						&\Xi_{(\mu,\nu)}\left(h^{\mu},h^{\nu}\right)(x,y)=\xi\left(f_{\mu,\nu}^{\lambda},g_{\mu,\nu}^{\lambda}\right)(x,y)
						\\
						&\quad\cdot\frac{\psi_{(\mu,\nu)}^{\prime}\left(h^{\mu},h^{\nu}\right)^{(1)}(x)+\psi_{(\mu,\nu)}^{\prime}\left(h^{\mu},h^{\nu}\right)^{(2)}(y)}{\lambda},
					\end{aligned}
				\end{equation}
				where the superindex $^{(i)}$ for $i\in\{1,2\}$ denotes the projection on the $i$-th coordinate.
			\end{enumerate}
		\end{Lema}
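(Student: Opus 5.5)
Part 1 is a pointwise computation made uniform. Fix $(f,g)$ and $(h^{f},h^{g})\in\mathcal{C}_{\operatorname{b}}(\mathcal{X})^{2}$, and set $a=(f(x)+g(y)-c(x,y))/\lambda$ and $b=(h^{f}(x)+h^{g}(y))/\lambda$. Then
\begin{equation*}
\xi(f+h^{f},g+h^{g})-\xi(f,g)-\xi(f,g)\,b=e^{a}\left(e^{b}-1-b\right).
\end{equation*}
Since $\|f\|_{\infty},\|g\|_{\infty},\|c\|_{\infty}$ are finite, $e^{a}\leq e^{3\|c\|_{\infty}/\lambda}$ uniformly in $(x,y)$. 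Moreover $|e^{b}-1-b|\leq \tfrac{1}{2}b^{2}e^{|b|}$ and $|b|\leq 2\|(h^{f},h^{g})\|/\lambda$. Hence the remainder is $O(\|(h^{f},h^{g})\|^{2})$ in sup norm, and the candidate derivative is clearly linear and bounded. This gives Fréchet differentiability of $\xi$ as a map into $\mathcal{C}_{\operatorname{b}}(\mathcal{X}^{2})$.

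For Part 2, I would apply an implicit function theorem to $\Psi$ in \eqref{Eqn:Psi}. The plan is as follows.

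\begin{enumerate}
\item Regard $\Psi$ as a map from $(\mathcal{M}(\mathcal{X})^{2},\|\cdot\|_{\mathbb{D}})\times(\mathcal{C}_{\operatorname{b}}(\mathcal{X})\times\mathcal{C}_{\operatorname{b},0}(\mathcal{X}))$ into $\mathcal{C}_{\operatorname{b}}(\mathcal{X})^{2}$. Here $\mathcal{C}_{\operatorname{b},0}$ denotes the functions with $\nu$-mean zero.

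\item Show that $\Psi$ is continuously Fréchet differentiable in $(f,g)$. The partial derivative $\partial_{2}\Psi$ at $\psi(\mu,\nu)$ has the form $I+K$, where $K(h^{f},h^{g})$ equals, up to sign and centering, the conditional expectations of $h^{g}$ and $h^{f}$ under the kernels of $\pi_{\mu,\nu}^{\lambda}$.

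\item Show that $\Psi$ is differentiable in the measure arguments along $(-\mu+\mathcal{P})\times(-\nu+\mathcal{P})$. Its derivative $\partial_{1}\Psi(h^{\mu},h^{\nu})$ is, e.g., $\lambda\,h^{\nu}(\xi(0,g)(x,\cdot))/\nu(\xi(0,g)(x,\cdot))$ in the first component, and similarly in the second.
\end{enumerate}

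Two points need care in step 3. The remainder must be uniform in $x$, and the functions $y\mapsto\xi(0,g)(x,y)$ must be controlled by the $\|\cdot\|_{\mathbb{D}}$ norm. For this I would use \ref{Itm:LCm}: measures are treated as Radon measures tested against $\mathcal{C}_{0}(\mathcal{X})_{1}$, the cost is continuous, and the family $\{\exp((g(\cdot)-c(x,\cdot))/\lambda)\}_{x}$ is uniformly bounded and equicontinuous on compacts. Tightness of probability measures then reduces the estimate to compacts, where $\mathcal{C}_{0}$ test functions suffice. The $\log$ is harmless because the integrals are bounded below by $e^{-3\|c\|_{\infty}/\lambda}$, using Lemma \ref{Lema:BoundsPotentials}.

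\textbf{Main obstacle.} The hard step is invertibility of $\partial_{2}\Psi$ on the space with the normalization $\int g\,\operatorname{d}\!\nu=0$. I would follow \cite{Carlier&Laborde2020}. The operator $K$ is a composition of Markov operators with strictly positive, bounded densities, so $K$ is compact on $\mathcal{C}_{\operatorname{b}}$; this uses continuity of $c$, Arzelà–Ascoli locally, and tightness. By Fredholm, it then suffices to show injectivity. If $(I+K)(h^{f},h^{g})=0$, then $h^{f}(x)=-\pi(h^{g}\mid x)$ and $h^{g}(y)=-\pi(h^{f}\mid y)$ up to a constant. Integrating the identity $\int(h^{f}(x)+h^{g}(y))^{2}\,\operatorname{d}\!\pi=0$ forces $h^{f}\oplus h^{g}=0$ $\pi$-a.s. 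Since $\pi\sim\mu\otimes\nu$, the function $h^{g}$ is constant, and the normalization makes it $0$.

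With invertibility in hand, the implicit function theorem in its directional form, with the uniform remainder from step 3, gives bounded directional differentiability of $\psi$ with derivative $-\partial_{2}\Psi^{-1}\partial_{1}\Psi$. Because $\psi$ is only defined on probability measures, I would run the contraction argument for the fixed-point map restricted to the cone of admissible directions.

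Finally, $\Xi=\xi\circ\psi$. Bounded directional differentiability is preserved under composition with the Fréchet differentiable outer map $\xi$ from Part 1. Indeed, $\psi(\mu+th^{\mu},\nu+th^{\nu})$ stays in the sup-ball of radius $\|c\|_{\infty}$ by Lemma \ref{Lema:BoundsPotentials}, so the $O(\|\cdot\|^{2})$ remainder bound applies uniformly over $\|h\|=1$. The chain rule then yields the stated formula for the derivative of $\Xi$.
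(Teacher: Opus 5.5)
Your architecture matches the paper's. Part 1 is the same uniform Taylor bound, and Part 2 uses the implicit function theorem for $\Psi$, Fredholm invertibility of $\partial_{2}\Psi$, and the chain rule for $\Xi$. The gap is in the step you yourself call the crux.

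\textbf{The compactness claim fails.} $K$ is in general \emph{not} compact on $\mathcal{C}_{\operatorname{b}}(\mathcal{X})$ with the sup norm when $\mathcal{X}$ is non-compact. Local Arzel\`a--Ascoli plus tightness cannot give it. Tightness of $\mu,\nu$ controls only the integration variable, while $x\mapsto\int\xi(x,y)\,h(y)\,\operatorname{d}\!\nu(y)$ is measured uniformly over all $x\in\mathcal{X}$.

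By Schauder's theorem, $h\mapsto\int\xi(\cdot,y)\,h(y)\,\operatorname{d}\!\nu(y)$ is compact on $\mathcal{C}_{\operatorname{b}}(\mathcal{X})$ only if $\{\xi(x,\cdot):x\in\mathcal{X}\}$ is relatively compact in $\operatorname{L}^{1}(\nu)$. This fails for $\mathcal{X}=\mathbb{R}$, $c(x,y)=1+\sin(xy)$, $\nu$ uniform on $[0,1]$. As $x\to\infty$, $y\mapsto e^{-\sin(xy)/\lambda}$ converges weakly to a constant but not strongly. So the Fredholm alternative is not available in the space where you invoke it. The paper does not prove this step; it cites \cite[Lemma 2.7]{Gonzalez2022}.

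\textbf{How to repair it.} Apply Fredholm in $\operatorname{L}^{2}(\mu)\times\operatorname{L}^{2}(\nu)$. There the bounded kernels make $K$ Hilbert--Schmidt whatever $\mathcal{X}$ is, and your variance argument gives injectivity. Then lift back to $\mathcal{C}_{\operatorname{b}}(\mathcal{X})^{2}$:
\begin{itemize}
\item $K$ maps $\operatorname{L}^{1}(\mu)\times\operatorname{L}^{1}(\nu)$ boundedly into $\mathcal{C}_{\operatorname{b}}(\mathcal{X})^{2}$.
\item Hence the $\operatorname{L}^{2}$ solution of $(I+K)h=r$ agrees a.e.\ with $r-Kh\in\mathcal{C}_{\operatorname{b}}(\mathcal{X})^{2}$.
\item This continuous version solves the equation everywhere, with $\|r-Kh\|_{\infty}\lesssim\|r\|_{\infty}$.
\end{itemize}
Alternatively, use the Doeblin bound $\xi\geq e^{-3\|c\|_{\infty}/\lambda}$. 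The two-step kernel $k(y,y')=\int\xi(x,y)\,\xi(x,y')\,\operatorname{d}\!\mu(x)$ then contracts $\nu$-centered functions in sup norm by the factor $1-e^{-3\|c\|_{\infty}/\lambda}$. The Schur complement of $I+K$ can then be inverted by a Neumann series.

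\textbf{Two smaller points.}

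First, your step 3 worry is unnecessary, and your tightness argument would not give uniformity over all unit directions anyway. By Riesz--Markov, $\sup_{u\in\mathcal{C}_{0}(\mathcal{X})_{1}}|\int u\,\operatorname{d}\!h|$ is the total variation of $h$. So $\sup_{x}|\int\phi(x,\cdot)\,\operatorname{d}\!h|\leq\|\phi\|_{\infty}\,\|h\|_{\ell^{\infty}(\mathcal{C}_{0}(\mathcal{X})_{1})}$ for any bounded measurable $\phi$. This is what the paper's extension of $\Psi$ to $(\mathcal{C}_{0}(\mathcal{X})^{\ast})^{2}$ uses. It also keeps $\int\xi(0,g)(x,\cdot)\,\operatorname{d}\!\zeta$ bounded below on a full ball of signed measures. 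The ordinary implicit function theorem then applies, and no cone-restricted contraction is needed.

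Second, taking $g$ in the fixed space of $\nu$-mean-zero functions conflicts with the normalization $\int g\,\operatorname{d}\!\zeta=0$, which moves with the measure argument. For $\zeta\neq\nu$ the solution leaves that subspace. Also, $\partial_{2}\Psi$ restricted to a codimension-one subspace cannot be onto $\mathcal{C}_{\operatorname{b}}(\mathcal{X})^{2}$. With the centered $\Psi$ of \eqref{Eqn:Psi} you can work on all of $\mathcal{C}_{\operatorname{b}}(\mathcal{X})^{2}$. Integrating the second component of $\partial_{2}\Psi(h^{f},h^{g})=0$ against $\nu$ already forces $\int h^{g}\,\operatorname{d}\!\nu=0$, and your injectivity argument then goes through unchanged.
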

		The previous lemma should be read as a functional differentiability result. Nevertheless, the differentiability of the map $\psi$ needs further contextualization. Firstly, the map $\psi$, as it is done in this paper too, is usually presented as map from $\mathcal{P}(\mathcal{X})^{2}$ to the space $\mathcal{C}_{\operatorname{b}}(\mathcal{X})\times\mathcal{C}_{\operatorname{b}}(\mathcal{X})$ (provided \ref{Itm:Con} holds). A similar result is presented in \cite{Carlier&Laborde2020} where the differentiability of $\psi$ is proved under different assumptions. In particular, for every pair $(\mu,\nu)\in\mathcal{P}(\mathcal{X})^{2}$ that are absolutely continuos respect to a pre-frixed pair $\left(m_{1},m_{2}\right)\in\mathcal{P}(\mathcal{X})^{2}$, the same conclusion is obtained. Lemma \ref{Lema:ourDiffDens} is a generalization of that differentiability result provided \ref{Itm:LCm} is given. Another interesting work in this line is presented in \cite{Goldfeld2024}, where the same formula for the derivative of $\Xi$ is obtained using an alternative norm. Nevertheless, the notion of convergence considered there does not meet the requirements needed for our purposes in this work.
		\begin{proof}
			\begin{enumerate}
				\item By definition of Fréchet differentiability, the goal is to show that given $f,g\in\mathcal{C}_{\operatorname{b}}(\mathcal{X})$ for every $h^{f},h^{g}\in\mathcal{C}_{\operatorname{b}}(\mathcal{X})$ with $\max\left(\left\|h^{f}\right\|_{\infty},\left\|h^{g}\right\|_{\infty}\right)=1$ that
				\begin{equation}\label{Eqn:FrechetDiffDef}
					\begin{aligned}
						&\sup_{x,y\in\mathcal{X}}\left(\left|\frac{\xi\left(f+t\,h^{f},g+t\,h^{g}\right)(x,y)-\xi(f,g)(x,y)}{t}\right.\right.
						\\
						&\quad\left.\left.-\xi(f,g)\,\frac{h^{f}(x)+h^{g}(y)}{\lambda}\right|\right)\longrightarrow0,
					\end{aligned}
				\end{equation}
				given $t\searrow0$. Now by Lemma \ref{Lema:BoundsPotentials}, \eqref{Eqn:FrechetDiffDef} can be bounded by
				\begin{equation}\label{Eqn:Lema5Aux1}
					\textstyle\exp\left(\frac{2\,\|c\|_{\infty}}{\lambda}\right)\,\underset{x,y\in\mathcal{X}}{\operatorname{sup}}\left(\left|\frac{1}{t}\,\left(\exp\left(\frac{t\,\left(h^{f}(x)+h^{g}(y)\right)}{\lambda}\right)-1-t\,\frac{h^{f}(x)+h^{g}(y)}{\lambda}\right)\right|\right)
				\end{equation}
				when taking out common factor $\xi(f,g)$ and applying Lemma \ref{Lema:BoundsPotentials}. Since the argument of the supremum of \eqref{Eqn:Lema5Aux1} is differentiable on $t$, mean value theorem provides that the previous expression is equal to
				\begin{equation}
					\textstyle\exp\left(\frac{2\,\|c\|_{\infty}}{\lambda}\right)\,\underset{x,y\in\mathcal{X}}{\operatorname{sup}}\left(\left|\exp\left(\frac{t^{\ast}(x,y)\,\left(h^{f}(x)+h^{g}(y)\right)}{\lambda}\right)-1\right|\,\frac{\left|h^{f}(x)+h^{g}(y)\right|}{\lambda}\right)
				\end{equation}
				where $t^{\ast}(x,y)\in[0,t]$. By definition of $t^{\ast}$,$h^{f}$ and $h^{g}$,
				\begin{equation}
					1-\exp\left(-\frac{2\,t}{\lambda}\right)\leq\exp\left(\frac{t^{\ast}(x,y)\,\left(h^{f}(x)+h^{g}(y)\right)}{\lambda}\right)-1\leq\exp\left(\frac{2\,t}{\lambda}\right)-1,
				\end{equation}
				and the derivative is obtained.
				\item To prove this result the implicit mapping theorem in Banach spaces is to be used (see \cite[Section 4.8]{Zeidler2012}). As it was pointed in Section \ref{Sec:Intro}, the map $\psi$ is well defined by existence and uniqueness of solution theorem for the entropic optimal transport problem \eqref{Eqn:EOT} (see \cite{Nutz2021}). More precisely, for fixed $\mu,\nu\in\mathcal{P}(\mathcal{X})$ there exists a pair $\left(f_{\mu,\nu}^{\lambda},g_{\mu,\nu}^{\lambda}\right)\in\operatorname{L}^{1}(\mu)\times\operatorname{L}^{1}(\nu)$ with $\int_{\mathcal{X}}g_{\mu,\nu}^{\lambda}\,\operatorname{d}\!\nu=0$. Further, by \ref{Itm:Con} they are also bounded and continuous. In terms of the implicit mapping $\Psi$: $\Psi\left(\mu,\nu,f_{\mu,\nu}^{\lambda},g_{\mu,\nu}^{\lambda}\right)=0$.
				
				Now, it is worth to observe that by \ref{Itm:LCm}, every Borel probability measure is, in fact, a Radon measure (see \cite[Theorem 7.8]{Folland2013}). Hence, without loss of generality we can extend $\Psi$ from $\mathcal{P}(\mathcal{X})^{2}$ to $\left(\mathcal{C}_{0}(\mathcal{X})^{\ast}\right)^{2}$, that is, the (linear) dual space of $\mathcal{C}_{0}(\mathcal{X})$. Note that the embedding of $\mathcal{P}(\mathcal{X})$ is isometric (with the norm $\|\cdot\|_{\ell^{\infty}\left(\mathcal{C}_{0}(\mathcal{X})_{1}\right)}$). With abuse of notation we will denote this extension by $\Psi$, too.
				
				The next condition of implicit mapping theorem to be checked is the continuous differentiability of $\Psi$. Firstly, the Gâteaux derivative of $\Psi$ takes the form
				\begin{equation}\label{Eqn:DiffPsi}
					\begin{aligned}
						&D\Psi_{(\eta,\zeta,f,g)}\left(h^{\eta},h^{\zeta},h^{f},h^{g}\right)
						\\
						&\quad=\lambda\,\left(\begin{array}{l}
							\frac{\int_{\mathcal{X}}\xi(0,g)(\cdot,y)\,\operatorname{d}\!h^{\zeta}(y)}{\int_{\mathcal{X}}\xi(0,g)(\cdot,y)\,\operatorname{d}\!\zeta(y)}
							\\
							\begin{aligned}
								&\textstyle\frac{\int_{\mathcal{X}}\xi(f,0)(x,\cdot)\,\operatorname{d}\!h^{\eta}(x)}{\int_{\mathcal{X}}\xi(f,0)(x,\cdot)\,\operatorname{d}\!\eta(x)}
								\\
								&\textstyle\quad-\int_{\mathcal{X}}\log\left(\int_{\mathcal{X}}\xi(f,0)(x,y)\,\operatorname{d}\!\eta(x)\right)\,\operatorname{d}\!h^{\zeta}(y)
								\\
								&\textstyle\quad-\int_{\mathcal{X}}\frac{\int_{\mathcal{X}}\xi(f,0)(x,y)\,\operatorname{d}\!h^{\eta}(x)}{\int_{\mathcal{X}}\xi(f,0)(x,y)\,\operatorname{d}\!\eta(x)}\,\operatorname{d}\!\zeta(y)
							\end{aligned}
						\end{array}\right)
						\\
						&\qquad+\left(\begin{array}{l}
							h^{f}+\frac{\int_{\mathcal{X}}\xi(0,g)(\cdot,y)\,h^{g}(y)\operatorname{d}\!\zeta(y)}{\int_{\mathcal{X}}\xi(0,g)(\cdot,y)\,\operatorname{d}\!\zeta(y)}
							\\
							\begin{aligned}
								&\textstyle h^{g}+\frac{\int_{\mathcal{X}}\xi(f,0)(x,\cdot)\,h^{f}(x)\,\operatorname{d}\!\eta(x)}{\int_{\mathcal{X}}\xi(f,0)(x,\cdot)\,\operatorname{d}\!\eta(x)}
								\\
								&\textstyle\quad-\int_{\mathcal{X}}\frac{\int_{\mathcal{X}}\xi(f,0)(x,y)\,h^{f}(x)\,\operatorname{d}\!\eta(x)}{\int_{\mathcal{X}}\xi(f,0)(x,y)\,\operatorname{d}\!\eta(x)}\,\operatorname{d}\!\zeta(y)
							\end{aligned}
						\end{array}\right).
					\end{aligned}
				\end{equation}
				Hence, the continuity in a neighborhood of $(\mu,\nu)$ is just an example of continuity under the integral together with Lemma \ref{Lema:BoundsPotentials}. Gâteaux differentiability in combination with the continuity of the derivative implies Fréchet differentiability.
				
				Additionally, observe that in \eqref{Eqn:DiffPsi} we have split the expression of $D\Psi_{(\eta,\zeta,f,g)}$ in two terms: the partial derivatives $\partial_{1}\Psi_{(\eta,\zeta,f,g)}$ and $\partial_{2}\Psi_{(\eta,\zeta,f,g)}$. The use of the implicit function theorem for $\Psi$ requires the invertibility of
				\begin{equation}\label{Eqn:ParfialDiffPsi2}
					\begin{aligned}
						&\partial_{2}\Psi_{\left(\mu,\nu,f_{\mu,\nu}^{\lambda},g_{\mu,\nu}^{\lambda}\right)}\left(h^{f},h^{g}\right)=\left(\begin{array}{c}
							h^{f}
							\\
							h^{g}
						\end{array}\right)
						\\
						&\quad + \left(\begin{array}{l}
							\int_{\mathcal{X}}\xi\left(f_{\mu,\nu}^{\lambda},g_{\mu,\nu}^{\lambda}\right)(\cdot,y)\,h^{g}(y)\,\operatorname{d}\!\nu(y)
							\\
							\int_{\mathcal{X}}\left(\xi\left(f_{\mu,\nu}^{\lambda}g_{\mu,\nu}^{\lambda}\right)(x,\cdot)-1\right)\,h^{f}(x)\,\operatorname{d}\!\mu(x)
						\end{array}\right).
					\end{aligned}
				\end{equation}
				The operator in \eqref{Eqn:ParfialDiffPsi2} has been proved to be invertible in \cite[Lemma 2.7]{Gonzalez2022} using the Fredholm's Alternative Theorem. Hence, we can apply the implicit mapping theorem in Banach spaces to conclude the differentiability of $\psi$. In particular, if we restrict the domain of $\psi_{(\mu,\nu)}^{\prime}$ to $\left(-\mu+\mathcal{P}(\mathcal{X})\right)\times\left(-\nu+\mathcal{P}(\mathcal{X})\right)$ the desired derivative is obtained.
				
				Finally, differentiability of $\Xi(\mu,\nu)=\xi\left(f_{\mu,\nu}^{\lambda},g_{\mu,\nu}^{\lambda}\right)$ constitutes a straightforward application of the chain rule. Details are omitted.
			\end{enumerate}
		\end{proof}
		Now, we are ready to give the proof of Theorem \ref{Th:DiffPhi}.
		\begin{proof}[Proof of Theorem \ref{Th:DiffPhi}]
			By definition of Hadamard directional differentiability, the goal of this proof is: given measures $h^{\mu}\in-\mu+\mathcal{P}(\mathcal{X})$ and $h^{\nu}\in-\nu+\mathcal{P}(\mathcal{X})$; sequences of measures $\left(h_{j}^{\mu}\right)_{j\in\mathbb{N}}\in\left(-\mu+\mathcal{P}(\mathcal{X})\right)^{\mathbb{N}}$ and $\left(h_{j}^{\nu}\right)_{n\in\mathbb{N}}\in\left(-\nu+\mathcal{P}(\mathcal{X})\right)^{\mathbb{N}}$ such that $\left\|h_{j}^{\mu}-h^{\mu}\right\|_{\ell^{\infty}\left(\mathcal{C}_{0}(\mathcal{X})_{1}\cup\mathcal{F}_{\mu}\right)},\left\|h_{j}^{\nu}-h^{\nu}\right\|_{\ell^{\infty}\left(\mathcal{C}_{0}(\mathcal{X})_{1}\cup\mathcal{F}_{\nu}\right)}\longrightarrow0$ and a sequence of real numbers $t_{j}$ decreasing to $0$ when $j\longrightarrow\infty$; showing
			\begin{equation}
				\lim_{j\longrightarrow\infty}\frac{\Phi\left(\mu+t_{j}\,h_{j}^{\mu},\nu+t_{j}\,h_{j}^{\nu}\right)-\Phi(\mu,\nu)-t_{j}\,\Phi_{(\mu,\nu)}^{\prime}\left(h^{\mu},h^{\nu}\right)}{t_{j}}=0,
			\end{equation}
			in $\ell^{\infty}\left(\mathcal{F}\right)$.
			
			To begin with, observe that
			\begin{equation}
				\begin{aligned}
					&\frac{\Phi\left(\mu+t_{j}\,h_{j}^{\mu},\nu+t_{j}\,h_{j}^{\nu}\right)(u)-\Phi(\mu,\nu)(u)-t_{j}\,\Phi_{(\mu,\nu)}^{\prime}\left(h^{\mu},h^{\nu}\right)(u)}{t_{j}}
					\\
					&\quad=\mathcal{T}_{1}+\mathcal{T}_{2}+\mathcal{T}_{3},
				\end{aligned}
			\end{equation}
			where
			\begin{equation}\label{Eqn:Monster}
				\begin{aligned}
					\mathcal{T}_{1}&=\int_{\mathcal{X}^{2}}u(x,y)\,\left(\frac{\xi\left(f_{j}^{\lambda},g_{j}^{\lambda}\right)(x,y)-\xi\left(f_{\mu,\nu}^{\lambda},g_{\mu,\nu}^{\lambda}\right)(x,y)}{t_{j}}\right.
					\\
					&\quad-\frac{1}{\lambda}\,\xi\left(f_{\mu,\nu}^{\lambda},g_{\mu,\nu}^{\lambda}\right)(x,y)\,\left(\psi_{(\mu,\nu)}^{\prime}\left(h^{\mu},h^{\nu}\right)^{(1)}(x)\right.
					\\
					&\quad\left.\left.+\psi_{(\mu,\nu)}^{\prime}\left(h^{\mu},h^{\nu}\right)^{(2)}(y)\right)\right)\,\operatorname{d}\mu\otimes\nu(x,y),
					\\
					\mathcal{T}_{2}&=\int_{\mathcal{X}^{2}}u(x,y)\,\xi\left(f_{j}^{\lambda},g_{j}^{\lambda}\right)(x,y)\,\operatorname{d}\left(\mu\otimes h_{j}^{\nu}+h_{j}^{\mu}\otimes\nu\right)(x,y),
					\\
					&\quad-\int_{\mathcal{X}^{2}}u(x,y)\,\xi\left(f_{\mu,\nu}^{\lambda},g_{\mu,\nu}^{\lambda}\right)(x,y)\,\operatorname{d}\left(\mu\otimes h^{\nu}+h^{\mu}\otimes\nu\right)(x,y)
					\\
					\mathcal{T}_{3}&=t_{j}\int_{\mathcal{X}^{2}}u(x,y)\,\xi\left(f_{j}^{\lambda},g_{j}^{\lambda}\right)(x,y)\,\operatorname{d}h_{j}^{\mu}\otimes h_{j}^{\nu}(x,y),
				\end{aligned}
			\end{equation}
			where $\left(f_{j}^{\lambda},g_{j}^{\lambda}\right)=\psi\left(\mu+t_{j}\,h_{j}^{\mu},\nu+t_{j}\,h^{\nu}\right)$. Now, we will split the proof in three parts.
			
			Firstly, remind that the Radon-Nykodim's derivative of $\pi_{\mu,\nu}^{\lambda}$ respect to $\mu\otimes\nu$, $\xi\left(f_{\mu,\nu}^{\lambda},g_{\mu,\nu}^{\lambda}\right)$, is bounded directional differentiable in the uniform norm (see Lemma \ref{Lema:ourDiffDens}). By dominated convergence theorem, $\mathcal{T}_{1}$ in \eqref{Eqn:Monster} is going to $0$. Note that this convergence is uniformly in $u\in\mathcal{F}$ by \ref{Itm:Bnd}.
			
			Secondly, $\mathcal{T}_{2}$ is equal to the sum of integrals
			\begin{equation}
				\begin{aligned}
					&\int_{\mathcal{X}^{2}}u(x,y)\,\left(\xi\left(f_{j}^{\lambda},g_{j}^{\lambda}\right)(x,y)-\xi\left(f_{\mu,\nu}^{\lambda},g_{\mu,\nu}^{\lambda}\right)(x,y)\right)\,\operatorname{d}\left(\mu\otimes h_{j}^{\nu}+h_{j}^{\mu}\otimes\nu\right)(x,y)
					\\
					&\quad+\int_{\mathcal{X}^{2}}u(x,y)\,\xi\left(f_{\mu,\nu}^{\lambda},g_{\mu,\nu}^{\lambda}\right)(x,y)\,\operatorname{d}\left(\mu\otimes\left(h_{j}^{\nu}-h^{\nu}\right)+\left(h_{j}^{\mu}-h^{\nu}\right)\otimes\nu\right)(x,y).
				\end{aligned}
			\end{equation}
			First term is converging to $0$ by the continuity of the duality mapping in the uniform norm (uniformly in $u\in\mathcal{F}$ by \ref{Itm:Bnd}). Second term is, precisely, the sum of $\left\|h_{j}^{\mu}-h^{\mu}\right\|_{\ell^{\infty}\left(\mathcal{F}_{\mu}\right)}$ and $\left\|h_{j}^{\nu}-h^{\nu}\right\|_{\ell^{\infty}\left(\mathcal{F}_{\nu}\right)}$, that goes to $0$ by hypothesis.
			
			Finally, since $t_{j}\searrow0$, bounding the integral in $\mathcal{T}_{3}$, the remainder, uniformly in $j$ is enough to see that the last term is converging to zero. By Lemma \ref{Lema:BoundsPotentials}, the integrand is bounded by $\exp\left(\frac{2\,\|c\|_{\infty}}{\lambda}\right)$. Hence, in order to bound the integral just observe that, by definition, $h_{j}^{\mu}$ and $h_{j}^{\nu}$ are differences of probability measures and its total variation is less or equal than $2$. Putting all together, $\mathcal{T}_{3}$ is tending to zero and this proof is ended.
		\end{proof}
	\subsection{Donsker property}
		In this subsection we focus on the proof of Theorem \ref{Th:Donskerity}. Even thought the nature of the sequences of estimators $\left(\mu_{m}\right)_{m\in\mathbb{N}}$ and $\left(\nu_{n}\right)_{n\in\mathbb{N}}$ is irrelevant to establish the asymptotic result, in the majority of applications the estimator is the empirical measure. In particular, on this work we have focus on the limit behavior of plug-in estimator $\Phi\left(\mu_{m},\nu_{n}\right)$ when the $\mu_{m}$ and $\nu_{n}$ are the empirical measures associated to given samples. So, it is worth to provide sufficient conditions for \ref{Itm:Dnk}. We start with a preliminary lemma.
		\begin{Lema}\label{Lema:AuxiliarIndicators}
			Let $\eta\in\mathcal{P}(\mathcal{X})$ and $\mathcal{G}$ be a class of measurable functions defined as follows: given $M:\mathcal{X}\times[0,1]\longrightarrow\mathbb{R}$ monotone for each $x$,
			$\mathcal{G}=\left\{M(\cdot,r):r\in[0,1]\right\}$.
			If the envelope function of $\mathcal{G}$ is in $\operatorname{L}^{2}(\eta)$, then the bracketing numbers $N_{[\,]}\left(\varepsilon,\mathcal{G},\operatorname{L}^{2}(\eta)\right)$ are polynomial.
		\end{Lema}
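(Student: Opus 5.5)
The plan is the classical bracketing argument for monotone families, in the same spirit as the proof that distribution-function indicators have polynomial bracketing numbers (\cite[Example 2.7.4 and Theorem 2.7.5]{van_der_Vaart&Wellner2023}). Assume without loss of generality that $r\mapsto M(x,r)$ is nondecreasing for every $x$; the nonincreasing case follows by replacing $r$ with $1-r$. If monotonicity is of different type for different $x$, split $\mathcal{X}$ into the measurable sets where $M(x,\cdot)$ is nondecreasing and where it is nonincreasing. Treat each piece separately and combine the brackets. This at most squares the bracketing number, so it remains polynomial. Let $F=\sup_{r}|M(\cdot,r)|\in\operatorname{L}^{2}(\eta)$ be the envelope. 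Since $F\geq|M(\cdot,0)|$ and $F\geq|M(\cdot,1)|$, we have $M(\cdot,0),M(\cdot,1)\in\operatorname{L}^{2}(\eta)$.

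The key device is the nondecreasing function $\Lambda(r)=\int_{\mathcal{X}}\left(M(x,r)-M(x,0)\right)^{2}\,\operatorname{d}\!\eta(x)$, which is bounded by $4\|F\|_{\operatorname{L}^{2}(\eta)}^{2}$. For $r\leq r'$, monotonicity gives $0\leq M(x,r')-M(x,r)\leq M(x,r')-M(x,0)$. Therefore
\begin{equation*}
\int\left(M(x,r')-M(x,r)\right)^{2}\,\operatorname{d}\!\eta\leq\int\left(M(x,r')-M(x,0)\right)^{2}-\left(M(x,r)-M(x,0)\right)^{2}\,\operatorname{d}\!\eta=\Lambda(r')-\Lambda(r),
\end{equation*}
using the elementary inequality $(b-a)^{2}\leq b^{2}-a^{2}$ for $0\leq a\leq b$. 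So $\operatorname{L}^{2}(\eta)$-distances along the family are controlled by increments of a bounded monotone scalar function.

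Next, choose a grid $0=r_{0}<r_{1}<\dots<r_{k}=1$ so that $\Lambda(r_{i}^{-})-\Lambda(r_{i-1})\leq\varepsilon^{2}$, with $k\lesssim\|F\|_{\operatorname{L}^{2}(\eta)}^{2}/\varepsilon^{2}$. The possible jumps of $\Lambda$ are the main obstacle. I would handle them, as in the empirical distribution function case, by also using the left limits $M(\cdot,r_{i}^{-})=\lim_{s\uparrow r_{i}}M(\cdot,s)$, which exist pointwise by monotonicity and lie in $\operatorname{L}^{2}(\eta)$ by domination by $F$. The brackets are then $[M(\cdot,r_{i-1}),M(\cdot,r_{i}^{-})]$ for $r\in[r_{i-1},r_{i})$, together with the degenerate brackets $[M(\cdot,r_{i}),M(\cdot,r_{i})]$. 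Monotone convergence and dominated convergence, with $F$ as dominating function, show that $\Lambda(r_{i}^{-})=\int(M(\cdot,r_{i}^{-})-M(\cdot,0))^{2}\,\operatorname{d}\!\eta$. The displayed inequality then extends to these limits, so each bracket has $\operatorname{L}^{2}(\eta)$-size at most $\varepsilon$.

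Concretely, the grid is built greedily: let $r_{i}=\inf\{r>r_{i-1}:\Lambda(r)-\Lambda(r_{i-1})>\varepsilon^{2}\}$. Each step, counting either the jump at $r_{i}$ or the increment up to it, consumes at least $\varepsilon^{2}$ of the total variation of $\Lambda$, so there are at most $4\|F\|_{\operatorname{L}^{2}(\eta)}^{2}/\varepsilon^{2}+1$ steps. This gives
\begin{equation*}
N_{[\,]}\left(\varepsilon,\mathcal{G},\operatorname{L}^{2}(\eta)\right)\leq2\left(\frac{4\|F\|_{\operatorname{L}^{2}(\eta)}^{2}}{\varepsilon^{2}}+1\right),
\end{equation*}
which is polynomial in $1/\varepsilon$.
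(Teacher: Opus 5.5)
Your argument is the paper's in all essentials. After normalizing $M(\cdot,0)\equiv0$, the paper's $G(r)^{2}$ is exactly your $\Lambda(r)$. Bracket sizes are controlled by the same inequality $(b-a)^{2}\le b^{2}-a^{2}$, and the grid is a quantile-type inversion of this monotone scalar function with brackets $[M(\cdot,r_{i-1}),M(\cdot,r_{i}^{-})]$. The difference is that the paper's proof opens by assuming, without loss of generality, that $M$ is right continuous (so $G$ is c\`{a}dl\`{a}g). You do not assume this, and your grid step fails without it.

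Left limits only absorb jumps of $\Lambda$ approached from the left. Suppose $\Lambda(p^{+})-\Lambda(p)>\varepsilon^{2}$ for some $p\in[0,1)$; this happens, e.g., when $r\mapsto M(x,r)$ has a common right jump at $p$ for $x$ in a set of positive $\eta$-measure. Then the grid you ask for does not exist: if $p\in[r_{i-1},r_{i})$, then $\Lambda(r_{i}^{-})\ge\Lambda(p^{+})>\Lambda(p)+\varepsilon^{2}\ge\Lambda(r_{i-1})+\varepsilon^{2}$. Concretely, take $\eta=\delta_{0}$ and $M(x,r)=\mathbf{1}_{\{r>1/2\}}$. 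Your greedy rule gives $r_{1}=1/2$ and then $r_{2}=\inf\{r>1/2:\Lambda(r)-\Lambda(1/2)>\varepsilon^{2}\}=1/2$ for every $\varepsilon<1$. The procedure stalls, and the claim that each step ``consumes at least $\varepsilon^{2}$'' is false: the mass consumed is the right jump at $r_{i-1}$ itself, which yields no progress. The lemma is trivially true in this example; it is the construction that breaks.

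The repair is local. In the paper's application the reduction is automatic. The map $t\mapsto\mathbf{1}_{\{c\le t\}}$ is right continuous, and since $\xi(f_{\mu,\nu}^{\lambda},g_{\mu,\nu}^{\lambda})$ is bounded, dominated convergence makes the elements of $\mathcal{F}_{\mu}$ and $\mathcal{F}_{\nu}$ right continuous in $t$. Then $\Lambda$ is right continuous, your greedy points satisfy $r_{i}>r_{i-1}$ and $\Lambda(r_{i})\ge\Lambda(r_{i-1})+\varepsilon^{2}$ whenever $r_{i}<1$, and your count goes through.

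For the lemma in the generality stated, use right limits as well:
\begin{itemize}
\item cover $\{M(\cdot,r):r\in(r_{i-1},r_{i})\}$ by $[M(\cdot,r_{i-1}^{+}),M(\cdot,r_{i}^{-})]$;
\item add the degenerate brackets $\{M(\cdot,r_{i})\}$ at every grid point;
\item set $r_{i}=\inf\{r>r_{i-1}:\Lambda(r)>\Lambda(r_{i-1}^{+})+\varepsilon^{2}\}$, or $1$ if this set is empty.
\end{itemize}
Then $r_{i}>r_{i-1}$ strictly, and your inequality applied to the one-sided limits bounds each bracket's size by $\varepsilon$. Moreover $\Lambda(r_{i}^{+})\ge\Lambda(r_{i-1}^{+})+\varepsilon^{2}$ whenever $r_{i}<1$, which restores a count of order $\|F\|_{\operatorname{L}^{2}(\eta)}^{2}/\varepsilon^{2}$.
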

		\begin{proof}
			For this proof, assume without loss of generality that $M$ is right continuous and that $M(\cdot,0)\equiv0$. By monotonicity, it is clear that $M(x,\cdot)$ is a càdlàg function and that the envelope function of the class $\mathcal{G}$ is $M(\cdot,1)$. Observe that brackets with extremes in $\mathcal{G}$ take the form
			\begin{equation}
				[M(\cdot,a),M(\cdot,b)]=\{M(\cdot,s):s\in[a,b]\},
			\end{equation}
			and $\varepsilon$-brackets satisfy that $\left(\int_{\mathcal{X}}(M(x,b)-M(x,a))^{2}\,\operatorname{d}\!\eta(x)\right)^{^{1}\!/\!_{2}}<\varepsilon$.
			
			Define
			\begin{equation}
				G(r)=\left(\int_{\mathcal{X}}M(x,r)^{2}\,\operatorname{d}\!\eta(x)\right)^{^{1}\!/\!_{2}}.
			\end{equation}
			It follows that $G$ is monotone, c\`{a}dl\`{a}g by Lebesgue's dominated convergence theorem and bounded by $G(1)$ by hypothesis. Take $\varepsilon>0$ such that $\varepsilon<4\,\sqrt{2}\,G(1)$. Call $r_{l}=G^{-1}\left(l\,\frac{\varepsilon^{2}}{32\,G(1)}\right)$ for $l\in\{1,\ldots,L\}$ with $L=\left\lfloor\frac{32\,G(1)^{2}}{\varepsilon^{2}}\right\rfloor$, where the inverse is understood in the quantile way. We claim that the colections of brackets\break$\left\{\left[M\left(\cdot,r_{l-1}\right),M\left(\cdot,r_{l}^{-}\right)\right]\right\}_{l=1,\ldots,L}$ covers $\mathcal{G}$ and the length of every bracket is smaller than $\varepsilon$; where it is understood that $r_{0}=0$ and
			\begin{equation}
				M\left(x,r^{-}\right)=\lim_{s\longrightarrow r^{-}}M(x,s),\quad x\in\mathcal{X}.
			\end{equation}
			The first part of the statement is straightforward by construction of the brackets. Let us proof the second one:
			\begin{equation}
				\begin{aligned}
					\left\|M\left(\cdot,r_{l}^{-}\right)-M\left(\cdot,r_{l-1}\right)\right\|_{\operatorname{L}^{2}(\eta)}&\leq\left(G\left(r_{l}\right)^{2}-G\left(r_{l-1}\right)^{2}\right)^{\rfrac{1}{2}}
					\\
					&\leq2\,\sqrt{G(1)}\,\left(G\left(r_{l}\right)-G\left(r_{l-1}\right)\right)^{\rfrac{1}{2}}\leq\frac{\varepsilon}{2}<\varepsilon.
				\end{aligned}
			\end{equation}
			
			Since the number of brackets needed is $\left\lfloor\frac{32\,G(1)^{2}}{\varepsilon^{2}}\right\rfloor$, we conclude that $N_{[\,]}\left(\varepsilon,\mathcal{G},\operatorname{L}^{2}(\eta)\right)$ is polynomial.
		\end{proof}
	\subsection{Proof of main results}
		\begin{proof}[Proof of Theorem \ref{Th:Donskerity}]
			The proof of \ref{Itm:Dnk} for the class $\mathcal{F}_{I,c}$ is a direct application of Lemma \ref{Lema:AuxiliarIndicators}. The proof for $\mathcal{C}_{\operatorname{b}}^{s}\left(\mathcal{X}^{2}\right)_{1}$ lies in the following fact: when $\mathcal{F}=\mathcal{C}_{\operatorname{b}}^{s}\left(\mathcal{X}^{2}\right)_{1}$, $\mathcal{F}_{\mu}$ and $\mathcal{F}_{\nu}$ are bounded subsets of $\mathcal{C}_{\operatorname{b}}^{s}(\mathcal{X})$. Then, \cite[Theorem 2.7.1]{van_der_Vaart&Wellner2023} concludes the proof.
		\end{proof}
		As it is shown in \cite{Shapiro1991}, Hadamard directional differentiability is the appropriate one for the delta method. It is also known that bounded directional differentiability implies Hadamard directional differentiability and the latter implies Gâteaux differentiability. Additionally, in finite dimensional topological vector spaces, Hadamard and bounded differentiability are equivalent. Further details about the distinct notions of differentiability used in statistics and optimization are reviewed in \cite{Shapiro1990} and references therein.
		\begin{proof}[Proof of Theorem \ref{Th:ColocConv}]
			The proof of this result is now a straightforward application of the (extended) delta method presented in \cite[Theorem 2.1]{Shapiro1991} (see also \cite[Theorem 3.10.4]{van_der_Vaart&Wellner2023}). By \ref{Itm:Dnk}, for some $s>0$, classes $\mathcal{C}_{0}^{s}(\mathfrak{X})_{1}\cup\mathcal{F}_{\mu}$ and $\mathcal{C}_{0}^{s}(\mathfrak{X})_{1}\cup\mathcal{F}_{\nu}$ are Donsker. Then, by \ref{Itm:Bal}, the product process $\sqrt{\frac{m\,n}{m+n}}\,\left(\mu_{m}-\mu,\nu_{n}-\nu\right)$ converges weakly to $\left(\sqrt{1-\tau}\,\mathbb{G}^{\mu},\sqrt{\tau}\,\mathbb{G}^{\nu}\right)$. By Theorem \ref{Th:DiffPhi}, we have the Hadamard directional differentiability of $\Phi$. By the delta method, the proof is ended.
		\end{proof}
		To finish this subsection, let us mention that Corollary \ref{Cor:Th:ClassicColocConv} is the particular case of Theorem \ref{Th:ColocConv} where $\mathcal{F}=\mathcal{F}_{I,c}$. Details are omitted.
	\subsection{Consistency of empirical results} \label{SuplMat:Consistency}
		In Section \ref{SubSec:RealDataAnalysis}, the colocalization curve is used to analyze a real dataset. However, in that context, the data are not given as a sample over the space $\mathcal{X}$, but rather as a smoothed version defined over a partition of it. In this subsection, we provide consistency results for this possible scenario, which is quite common in the literature (see \textcite{Schrieber2016}).
		
		To begin with, let us introduce some notation for this subsection. Given $\eta\in\mathcal{P}(\mathcal{X})$, assume that there exists a family of Borel sets $\left\{\mathcal{R}_{i}^{\eta}\right\}_{i=1,\ldots,L}$ such that $\mathcal{X}=\bigcup_{i=1}^{L}\mathcal{R}_{i}^{\eta}$, $\mathcal{R}_{i}^{\eta}\cap\mathcal{R}_{j}^{\eta}=\emptyset$, and $\eta\left(\mathcal{R}_{i}^{\eta}\right) \geq 0$. Further, there exists $\left\{U_{i,l}^{\eta}\right\}_{i=1,\ldots,L}$ that satisfies $U_{i,l}^{\eta} \geq 0$ and $\sum_{i=1}^{L}U_{i,l}^{\eta}=1$. Denote  $\mathbf{m}^{\eta}=\left(\eta\left(\mathcal{R}_{i}^{\eta}\right)\right)_{i=1,\ldots,L}$ and $\mathbb{U}_{l}^{\eta}=\left(U_{i,l}^{\eta}\right)_{i=1,\ldots,L}$ (all the vectors are understood as column matrices).
		
		In section \ref{SubSec:RealDataAnalysis}, for the measures $\mu$ and $\nu$, families of sets $\left\{\mathcal{R}_{i}^{\mu}\right\}_{i=1,\ldots,L}$ and\break$\left\{\mathcal{R}_{i}^{\nu}\right\}_{i=1,\ldots,L}$ are the grids of pixels while the families of variables $\left\{U_{i,m}^{\mu}\right\}_{i=1,\ldots,L}$ and $\left\{U_{i,n}^{\nu}\right\}_{i=1,\ldots,L}$ model the intensity obtained at the experiment in each of the regions. Hence, the assumption on the asymptotic behaviour imposed on $\left\{U_{i,l}^{\eta}\right\}_{i=1,\ldots,L}$ is reasonable. A particular scenario of this framework is given when samples $X_{1},\ldots,X_{m}\overset{\operatorname{i.i.d}}{\sim}\mu$ and $Y_{1},\ldots,Y_{n}\overset{\operatorname{i.i.d}}{\sim}\nu$ are taken, defining
		\begin{equation}\label{Eqn:NaiveU}
			\begin{aligned}
				U_{i,m}^{\mu}&=\frac{\#\left\{j\in\{1,\ldots,m\}:X_{j}\in\mathcal{R}_{i}^{\mu}\right\}}{m}=\frac{1}{m}\,\sum_{j=1}^{m}\mathbf{1}_{\mathcal{R}_{i}^{\mu}}\left(X_{j}\right)=\mu_{m}\left(\mathbf{1}_{\mathcal{R}_{i}^{\mu}}\right),
				\\
				U_{i,n}^{\nu}&=\frac{\#\left\{j\in\{1,\ldots,n\}:Y_{j}\in\mathcal{R}_{i}^{\nu}\right\}}{n}=\frac{1}{n}\,\sum_{j=1}^{n}\mathbf{1}_{\mathcal{R}_{i}^{\nu}}\left(Y_{j}\right)=\nu_{n}\left(\mathbf{1}_{\mathcal{R}_{i}^{\nu}}\right).
			\end{aligned}
		\end{equation}
		
		Call $\mathcal{F}_{\eta}^{\operatorname{pixel}}=\left\{\mathbf{1}_{\mathcal{R}_{i}^{\eta}}\right\}_{i=1,\ldots,L}$ and define for the measure $\eta$
		\begin{equation}
			\widehat{\eta}_{L}=\sum_{i=1}^{L}\eta\left(\mathcal{R}_{i}^{\eta}\right)\,\delta_{h_{i}^{\eta}},\quad\widehat{\eta}_{L,l}=\sum_{i=1}^{L}U_{i,l}^{\eta}\,\delta_{h_{i}^{\eta}},
		\end{equation}
		where $\left\{h_{i}^{\eta}\right\}_{i=1,\ldots,L}$ are points such that $h_{i}^{\eta}\in\mathcal{R}_{i}^{\eta}$. The points $\left\{h_{i}^{\eta}\right\}_{i=1,\ldots,L}$ should be seen as a discretization of the space $\mathcal{X}$. By virtue of \cite[Proposition 7.9]{Folland2013} and \cite[Lemma 19.24]{van_der_Vaart2000}, if the class $\mathcal{C}_{\operatorname{b}}^{s}(\mathcal{X})_{1}$ is $\eta$-Donsker and $\left\{U_{i,l}^{\eta}\right\}_{i=1,\ldots,L}$ are analogous to \eqref{Eqn:NaiveU}, then $\sqrt{l}\,\left(\widehat{\eta}_{L,l}-\widehat{\eta}_{L}\right)\rightsquigarrow\mathbb{G}^{\eta}$ when $l\longrightarrow\infty$ on $\mathcal{F}_{\eta}^{\operatorname{pixel}}$. Equivalently, by Cr\'{a}mer-Wold's Theorem (\cite[p. 16]{van_der_Vaart2000}) and Portmanteau's Theorem (\cite[Theorem 2.1]{Billingsley2013}),
		\begin{equation}\label{Eqn:ConvergenceU}
			\sqrt{l}\,\left(\mathbb{U}_{l}^{\eta}-\mathbf{m}^{\eta}\right)\rightsquigarrow\mathcal{N}\left(0,\Sigma_{\eta}\right),
		\end{equation}
		when $l\longrightarrow\infty$, where $\Sigma_{\eta}=\sum_{i=1}^{L}\eta\left(\mathcal{R}_{i}^{\eta}\right)\,e_{i}\,e_{i}^{\top}-\mathbf{m}^{\eta}\,\left(\mathbf{m}^{\eta}\right)^{\top}$, $\left\{e_{i}\right\}_{i=1,\ldots,L}$ are the (column) vectors of the canonical basis and $^{\top}$ means transposition of matrices.
		
		The goal of this subsection is providing consistency results for the methodology proposed in \cite{Sommerfeld2019} under the framework of this paper, that is, for the supremum norm, given \eqref{Eqn:ConvergenceU}. Since the proof of main results is strongly based on the application of delta method, it is enough to prove the convergence of the underlying process.
		
		Define the (resampling) measure as
		\begin{equation}
			\widehat{\eta}_{L,l}^{\ast}=\sum_{i=1}^{L}M_{i,l}^{\eta}\,\delta_{h_{i}^{\eta}},
		\end{equation}
		where $l\,\mathbb{M}_{l}^{\eta}=\left(M_{i,l}^{\eta}\right)_{i=1,\ldots,L}^{\top}\sim\operatorname{Mult}\left(l;U_{1,l}^{\eta},\ldots,U_{L,l}^{\eta}\right)$, a multinomial distribution.
		\begin{Lema}
			Given $\eta\in\mathcal{P}(\mathcal{X})$ and $\mathbf{m}^{\eta}$ and $\mathbb{U}_{l}^{\eta}$ as above, assume \eqref{Eqn:ConvergenceU}. Then,
			\begin{equation}\label{Eqn:ConvergenceBootsU}
				\sqrt{l}\,\left(\mathbb{M}_{l}^{\eta}-\mathbb{U}_{l}^{\eta}\right)\rightsquigarrow\mathcal{N}\left(0,\Sigma_{\eta}\right)
			\end{equation}
			when $l\longrightarrow\infty$. Equivalently, in terms of $\widehat{\eta}_{L,l}$ and $\widehat{\eta}_{L,l}^{\ast}$: $\sqrt{l}\,\left(\widehat{\eta}_{L,l}^{\ast}-\widehat{\eta}_{L,l}\right)\rightsquigarrow\mathbb{G}^{\eta}$ on $\mathcal{F}_{\eta}^{\operatorname{pixel}}$.
		\end{Lema}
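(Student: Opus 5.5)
The plan is to reduce the claim to a conditional central limit theorem for multinomial vectors in $\mathbb{R}^{L}$, since $L$ is fixed. The second formulation is equivalent to the first because every $h\in\mathcal{F}_{\eta}^{\operatorname{pixel}}$ is an indicator $\mathbf{1}_{\mathcal{R}_{i}^{\eta}}$ and
\begin{equation*}
\sqrt{l}\,\left(\widehat{\eta}_{L,l}^{\ast}-\widehat{\eta}_{L,l}\right)\left(\mathbf{1}_{\mathcal{R}_{i}^{\eta}}\right)=\sqrt{l}\,\left(M_{i,l}^{\eta}-U_{i,l}^{\eta}\right).
\end{equation*}
Here we use $h_{i}^{\eta}\in\mathcal{R}_{i}^{\eta}$ and that the regions are disjoint. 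Because the index class is finite, $\ell^{\infty}(\mathcal{F}_{\eta}^{\operatorname{pixel}})\cong\mathbb{R}^{L}$, and the limit $\mathbb{G}^{\eta}$ restricted to these indicators is exactly $\mathcal{N}(0,\Sigma_{\eta})$, since $\eta(\mathbf{1}_{\mathcal{R}_{i}}\mathbf{1}_{\mathcal{R}_{j}})-\eta(\mathcal{R}_{i})\eta(\mathcal{R}_{j})$ is the $(i,j)$ entry of $\Sigma_{\eta}$. So it suffices to prove \eqref{Eqn:ConvergenceBootsU}, interpreted conditionally on $\mathbb{U}_{l}^{\eta}$, in the bootstrap sense.

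First, \eqref{Eqn:ConvergenceU} gives $\mathbb{U}_{l}^{\eta}-\mathbf{m}^{\eta}=O_{P}(l^{-1/2})$, so $\mathbb{U}_{l}^{\eta}\to\mathbf{m}^{\eta}$ in probability. Next, conditionally on $\mathbb{U}_{l}^{\eta}=p_{l}$, write $l\,\mathbb{M}_{l}^{\eta}=\sum_{k=1}^{l}\xi_{k}$. The $\xi_{k}$ are i.i.d. categorical vectors taking the value $e_{i}$ with probability $p_{l,i}$. Then $\sqrt{l}\,(\mathbb{M}_{l}^{\eta}-p_{l})$ is a normalized sum of a triangular array of centered, uniformly bounded vectors ($\|\xi_{k}-p_{l}\|\le 2$), with covariance
\begin{equation*}
\Sigma(p_{l})=\operatorname{diag}(p_{l})-p_{l}p_{l}^{\top}.
\end{equation*}
By the Lindeberg--Feller theorem, the Lindeberg condition holds trivially because of boundedness. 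Combined with Cramér--Wold, this shows that for any deterministic sequence $p_{l}\to\mathbf{m}^{\eta}$ we have $\sqrt{l}\,(\mathbb{M}_{l}^{\eta}-p_{l})\rightsquigarrow\mathcal{N}(0,\Sigma(\mathbf{m}^{\eta}))=\mathcal{N}(0,\Sigma_{\eta})$, using continuity of $p\mapsto\Sigma(p)$.

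The remaining step, which I expect to be the only delicate point, is passing from deterministic $p_{l}$ to random $\mathbb{U}_{l}^{\eta}$. I would use the bounded-Lipschitz metric $d_{BL}$ for the conditional law. Define $a_{l}(p)=d_{BL}\bigl(\mathcal{L}(\sqrt{l}(\mathbb{M}_{l}-p)\mid p),\mathcal{N}(0,\Sigma_{\eta})\bigr)$. By the previous paragraph, $a_{l}(p_{l})\to0$ for every sequence $p_{l}\to\mathbf{m}^{\eta}$. A standard subsequence argument then gives $\sup_{\|p-\mathbf{m}^{\eta}\|\le\delta_{l}}a_{l}(p)\to0$ for some $\delta_{l}\downarrow0$. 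Since $P(\|\mathbb{U}_{l}^{\eta}-\mathbf{m}^{\eta}\|>\delta_{l})\to0$ for a suitably slow $\delta_{l}$, it follows that $a_{l}(\mathbb{U}_{l}^{\eta})\to0$ in probability. This is conditional weak convergence in probability, which is the form required by the bootstrap delta method. If unconditional convergence is wanted instead, it follows by integrating over $\mathbb{U}_{l}^{\eta}$ and applying dominated convergence.
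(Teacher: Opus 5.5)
Your proof is correct, but it takes a different route from the paper's. Where you split the problem in two, the paper works directly with the random weights.

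\textbf{The paper's route.} It writes the conditional characteristic function of $\sqrt{l}\,(\mathbb{M}_{l}^{\eta}-\mathbb{U}_{l}^{\eta})$ given $\mathbb{U}_{l}^{\eta}$ as an $l$-th power of a single-trial characteristic function. A second-order Taylor expansion gives $\bigl(1-\frac{1}{2l}\mathbf{t}^{\top}\Sigma(\mathbb{U}_{l}^{\eta})\mathbf{t}+o(l^{-1})\bigr)^{l}$. It then uses $U_{i,l}^{\eta}\to\eta(\mathcal{R}_{i}^{\eta})$ in probability to reach $\exp(-\mathbf{t}^{\top}\Sigma_{\eta}\mathbf{t}/2)$, and closes with a citation to van der Vaart and Wellner.

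\textbf{Your route.} You first prove a deterministic triangular-array CLT (Lindeberg--Feller plus Cram\'er--Wold) for arbitrary $p_{l}\to\mathbf{m}^{\eta}$. You then transfer it to the random $\mathbb{U}_{l}^{\eta}$ through $d_{BL}$ and a subsequence argument.

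\textbf{What each buys.} The paper's argument is shorter and more computational, exploiting the product structure of the multinomial. However, its final step, from pointwise-in-probability convergence of conditional characteristic functions to conditional weak convergence, is left to the citation, and its displayed formula for $f_{l}$ is garbled. Your version makes the bootstrap mode of convergence explicit: $d_{BL}$-convergence in probability is exactly what the bootstrap delta method needs. It also handles the equivalence with $\mathbb{G}^{\eta}$ on $\mathcal{F}_{\eta}^{\operatorname{pixel}}$ transparently, through $\widehat{\eta}_{L,l}^{\ast}(\mathbf{1}_{\mathcal{R}_{i}^{\eta}})=M_{i,l}^{\eta}$, rather than via Portmanteau. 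Like the paper, it uses only consistency of $\mathbb{U}_{l}^{\eta}$.

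\textbf{One small tightening.} Your subsequence argument actually shows $\sup_{\|p-\mathbf{m}^{\eta}\|\le\delta_{l}}a_{l}(p)\to0$ for \emph{every} $\delta_{l}\to0$. Otherwise you could build a sequence $p_{l}\to\mathbf{m}^{\eta}$ with $a_{l}(p_{l})\not\to0$. This is what lets you choose $\delta_{l}$ slowly, e.g.\ $\delta_{l}=l^{-1/4}$, given $\mathbb{U}_{l}^{\eta}-\mathbf{m}^{\eta}=O_{P}(l^{-1/2})$. As written, ``for some $\delta_{l}$'' followed by ``for a suitably slow $\delta_{l}$'' leaves it unclear whether a single sequence meets both requirements.
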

		\begin{proof}
			As in the previous paragraphs, the equivalence between both limits is given by Cr\'{a}mer-Wold's Theorem and Portmanteau's Theorem. Hence, it is enough to prove \eqref{Eqn:ConvergenceBootsU}. Since it is a multivariate random variable, characteristic functions are used. Hence, for $\mathbf{t}\in\mathbb{R}^{L}$:
			\begin{equation}
				\begin{aligned}
					f_{l}(\mathbf{t})&=\mathbb{E}\left(\left.\operatorname{exp}\left(\iota\,\sqrt{l}\,\left(\mathbb{M}_{l}^{\eta}-\mathbb{U}_{l}^{\eta}\right)^{\top}\,\mathbf{t}\right)\right|\mathbb{U}_{l}^{\eta}\right)
					\\
					&=\left(\sum_{i=1}^{L}U_{i,l}^{\eta}\,\operatorname{exp}\left(\frac{\iota}{\sqrt{l}}\,\left(\sum_{i=1}^{L}U_{i,l}^{\eta}\,e_{i}\,e_{i}^{\top}-\mathbb{U}_{l}^{\eta}\,\left(\mathbb{U}_{l}^{\eta}\right)^{\top}\right)\right)\right)^{l}.
				\end{aligned}
			\end{equation}
			Now, use the Taylor's expansion of second order of the exponential function and the fact that $\sum_{i=1}^{L}U_{i,l}^{\eta}=1$ to conclude that
			\begin{equation}
				f_{l}(\mathbf{t})=\left(1-\frac{1}{2\,l}\,\mathbf{t}^{\top}\,\left(\sum_{i=1}^{L}U_{i,l}^{\eta}\,e_{i}\,e_{i}^{\top}-\mathbb{U}_{l}^{\eta}\,\left(\mathbb{U}_{l}^{\eta}\right)^{\top}\right)\,\mathbf{t}+o\left(\frac{\|\mathbf{t}\|_{2}^{2}}{l}\right)\right)^{l}.
			\end{equation}
			Observe that \eqref{Eqn:ConvergenceU} implies that for all $i\in\{1,\ldots,L\},\ U_{i,l}^{\eta}\overset{\operatorname{Prob}}{\longrightarrow}\eta\left(\mathcal{R}_{i}^{\eta}\right)$ when $l\longrightarrow\infty$. So, $f_{l}(\mathbf{t})\longrightarrow\operatorname{exp}\left(-\frac{\mathbf{t}^{\top}\,\Sigma_{\eta}\,\mathbf{t}}{2}\right)$ when $l\longrightarrow\infty$. By \cite[Theorem 1.13.1 (iv)]{van_der_Vaart&Wellner2023} the proof is ended.
		\end{proof}
		It is worth to mention that to obtain the limit \eqref{Eqn:ConvergenceBootsU}, it is only needed that $i\in\{1,\ldots,L\},\ U_{i,l}^{\eta}\overset{\operatorname{Prob}}{\longrightarrow}\eta\left(\mathcal{R}_{i}^{\eta}\right)$ when $l\longrightarrow\infty$. However, there is no warranty of \eqref{Eqn:ConvergenceU} and the limits might differ (if they exist). Consequently, \eqref{Eqn:ConvergenceBootsU} might not be interesting for applications. Usual frameworks where \eqref{Eqn:ConvergenceU} is given include the scenario provided in equation \eqref{Eqn:NaiveU} and small perturbations of it (in mathematical terms: $\sqrt{l}\,\left\|\widehat{\eta}_{L,l}-\eta_{l}\right\|_{\ell^{\infty}\left(\mathcal{F}_{\eta}^{\operatorname{pixel}}\right)}\overset{\operatorname{Prob}}{\longrightarrow}0$ when $l\longrightarrow\infty$).
		
\end{document}